\documentclass[11pt,a4paper]{article}

\usepackage[T1]{fontenc}
\usepackage{lmodern}
\usepackage{microtype}
\usepackage{geometry}
\usepackage{amsmath,amssymb,amsthm,mathtools}
\usepackage{enumitem}
\usepackage{authblk}
\usepackage[hidelinks]{hyperref}
\usepackage[nameinlink,noabbrev]{cleveref}

\numberwithin{equation}{section}
\newtheorem{theorem}{Theorem}[section]
\newtheorem{proposition}[theorem]{Proposition}
\newtheorem{lemma}[theorem]{Lemma}
\newtheorem{assumption}[theorem]{Assumption}
\theoremstyle{definition}
\newtheorem{definition}[theorem]{Definition}
\theoremstyle{remark}
\newtheorem{remark}[theorem]{Remark}

\newcommand{\R}{\mathbb R}
\newcommand{\C}{\mathbb C}
\newcommand{\supp}{\operatorname{supp}}
\newcommand{\cH}{\mathcal H}
\newcommand{\cD}{\mathcal D}
\newcommand{\cL}{\mathcal L}
\newcommand{\cP}{\mathcal P}
\newcommand{\cA}{\mathcal A}
\newcommand{\cV}{\mathcal V}
\newcommand{\cX}{\mathcal X}
\newcommand{\cY}{\mathcal Y}

\title{Time-Dependent Potential Recovery from Local Data on\\
Conformally Transversally Anisotropic Manifolds}
\author[1]{Yujian Zheng\thanks{Email: \href{mailto:d202280014@hust.edu.cn}{d202280014@hust.edu.cn}}}
\author[1,2]{Zhiwen Duan\thanks{Email: \href{mailto:duanzhw@hust.edu.cn}{duanzhw@hust.edu.cn}}}
\author[1]{Shiqi Jing\thanks{Corresponding author. Email: \href{mailto:d202580010@hust.edu.cn}{d202580010@hust.edu.cn}}}
\affil[1]{School of Mathematics and Statistics, Huazhong University of Science and Technology, Wuhan 430074, Hubei, China}
\affil[2]{Hubei Key Laboratory of Engineering Modeling and Scientific Computing, Huazhong University of Science and Technology, Wuhan 430074, Hubei, China}
\date{}

\begin{document}
\maketitle

\begin{abstract}
We study the recovery of a time-dependent potential in a parabolic equation on a
conformally transversally anisotropic manifold.  The initial state and a lateral
Dirichlet datum supported near the front face are used as inputs, while the
terminal state and the Neumann trace near the back face are observed.  Assuming
injectivity of the attenuated geodesic ray transform on the transversal manifold
for all sufficiently small constant attenuations, we prove that these partial
input--output data uniquely determine the potential.  No simplicity assumption
is imposed on the transversal manifold.  The proof combines a
conformal reduction, transversal Gaussian beam quasimodes, boundary Carleman
estimates, and geometric optics solutions with prescribed lateral supports.
\end{abstract}

\noindent\textbf{Keywords:} inverse parabolic problem; partial boundary data;
CTA manifold; Gaussian beam quasimodes; Carleman estimate

\medskip
\noindent\textbf{2020 Mathematics Subject Classification:} 35R30; 35K20; 53C21

\section{Introduction and statement of the main result}
\label{sec:introduction}

Let \(0<T<\infty\), and let \((M,g)\) be a smooth, compact, oriented Riemannian
manifold of dimension \(n\geq3\) with smooth boundary.  We set
\[
 Q=(0,T)\times M^{\mathrm{int}},\qquad
 \overline Q=[0,T]\times M,\qquad
 \Sigma=(0,T)\times\partial M.
\]
The Laplace--Beltrami operator associated with \(g\) is denoted by \(\Delta_g\).
In local coordinates,
\[
 \Delta_g=|g|^{-1/2}\partial_{x_j}
 \bigl(|g|^{1/2}g^{jk}\partial_{x_k}\bigr),
\]
where \((g^{jk})\) is the inverse of \((g_{jk})\),
\(|g|=\det(g_{jk})\), and the Einstein summation convention is used.

We assume that \((M,g)\) admits a conformally transversally anisotropic
realization.  The conformal factor in this realization is denoted by \(c\).  For
\(q\in C(\overline Q;\C)\), consider the initial-boundary value problem
\begin{equation}
 \begin{cases}
  (c(x)^{-1}\partial_t-\Delta_g+q(t,x))u=0&\text{in }Q,\\
  u=f&\text{on }\Sigma,\\
  u(0,\cdot)=u_0&\text{in }M.
 \end{cases}
 \label{eq:original-ibvp}
\end{equation}
Our objective is to recover \(q\) from partial lateral boundary measurements,
with the initial state regarded as an input and the terminal state as an output.
Such a potential may represent a spatially and temporally varying reaction or
absorption rate in heat conduction, mass transport, and related diffusion
processes.

\begin{definition}\label{def:cta}
A smooth compact Riemannian manifold \((M,g)\) of dimension \(n\geq3\) is called
\emph{conformally transversally anisotropic} (CTA) if
\begin{equation}
 M\Subset\R\times M_0^{\mathrm{int}},\qquad
 g=c\,(dx_1^2\oplus g_0),
 \label{eq:cta-geometry}
\end{equation}
where \((M_0,g_0)\) is a smooth compact \((n-1)\)-dimensional Riemannian
manifold with smooth boundary and \(c\in C^\infty(\R\times M_0)\) is strictly
positive.  The manifold \((M_0,g_0)\) is called the transversal manifold.
\end{definition}

The coefficient \(c\) in \eqref{eq:original-ibvp} is the restriction to \(M\)
of the conformal factor in \eqref{eq:cta-geometry}; it is not an additional
coefficient independent of the geometry.  The factor \(c^{-1}\) multiplying
\(\partial_t\) in \eqref{eq:original-ibvp} is deliberate: with this
normalization, conjugation by the appropriate power of the known conformal
factor reduces the equation exactly to a parabolic equation for the product
metric, changing only the zeroth-order potential; see
\cref{sec:preliminaries}.  Examples of CTA manifolds include
precompact smooth proper subsets of Euclidean, spherical, and hyperbolic spaces;
see \cite{DosSantosFerreiraEtAl2016}.  We write
\[
 x=(x_1,x')\in\R\times M_0
\]
and reserve the notation
\begin{equation}
 \phi(x):=x_1
 \label{eq:spatial-lcw}
\end{equation}
for the spatial limiting Carleman weight.  No simplicity assumption is imposed
on \((M_0,g_0)\).  Instead, we assume injectivity of a family of attenuated
geodesic ray transforms.

A unit-speed geodesic segment \(\gamma:[0,L]\to M_0\) is called
\emph{non-tangential} if
\[
 \gamma(0),\gamma(L)\in\partial M_0,\qquad
 \gamma((0,L))\subset M_0^{\mathrm{int}},
\]
and both endpoint velocities are non-tangential to \(\partial M_0\).  For
\(a\in\R\) and \(F\in C(M_0;\C)\), define
\begin{equation}
 I_aF(\gamma):=\int_0^L e^{ar}F(\gamma(r))\,dr.
 \label{eq:attenuated-ray-transform}
\end{equation}

\begin{assumption}\label{ass:ray-injectivity}
There exists \(\varepsilon_0>0\) such that, for every \(a\in\R\) with
\(|a|<\varepsilon_0\) and every \(F\in C(M_0;\C)\), the identities
\(I_aF(\gamma)=0\) for all non-tangential unit-speed geodesics \(\gamma\) imply
\(F=0\) in \(M_0\).
\end{assumption}

Assumption~\ref{ass:ray-injectivity} is satisfied, for example, when
\((M_0,g_0)\) is a simple surface, since injectivity is then known for arbitrary
smooth scalar attenuations; see \cite{SaloUhlmann2011}.

Let \(\nu\) be the outward unit normal to \(\partial M\) with respect to \(g\),
and define the front and back faces by
\begin{equation}
 \partial M_+:=\{x\in\partial M:\partial_\nu\phi(x)\geq0\},
 \qquad
 \partial M_-:=\{x\in\partial M:\partial_\nu\phi(x)\leq0\}.
 \label{eq:front-back-faces}
\end{equation}
Let \(U_0,V_0\subset\partial M\) be open neighborhoods of \(\partial M_+\) and
\(\partial M_-\), respectively, and set
\[
 U=(0,T)\times U_0,\qquad V=(0,T)\times V_0.
\]

For \(r,s>0\), let
\[
 H^{r,s}(\Sigma)
 :=L^2(0,T;H^r(\partial M))\cap H^s(0,T;L^2(\partial M)).
\]
We denote the closure of \(C_0^\infty(\Sigma)\) in this space by
\(H_0^{r,s}(\Sigma)\), and set
\[
 H^{-r,-s}(\Sigma):=\bigl(H_0^{r,s}(\Sigma)\bigr)'.
\]
Also, \(H^{-1}(M)=(H_0^1(M))'\).  Unless stated otherwise, the support of a
generalized lateral trace is its distributional support relative to \(\Sigma\),
and closures of subsets of \(\Sigma\) are taken relative to \(\Sigma\).

For later use, we fix the notation for the lateral and temporal traces.  If
\(u\in C^\infty(\overline Q)\), set
\begin{equation}
 \tau_0u:=u|_\Sigma,\qquad
 \tau_1u:=\partial_\nu u|_\Sigma,\qquad
 r_0u:=u(0,\cdot),\qquad
 r_Tu:=u(T,\cdot).
 \label{eq:four-traces}
\end{equation}
Appendix~\ref{app:well-posedness} shows that these maps admit generalized
extensions to the graph spaces relevant to the forward and backward parabolic
operators.  In particular, for the transposition solutions used below,
\[
 \tau_0u\in H^{-1/2,-1/4}(\Sigma),\qquad
 \tau_1u\in H^{-3/2,-3/4}(\Sigma),\qquad
 r_0u,r_Tu\in H^{-1}(M).
\]
We use the same symbols for the generalized traces.  Thus the boundary and
initial conditions in \eqref{eq:original-ibvp} are understood as
\(\tau_0u=f\) and \(r_0u=u_0\).

Define
\begin{equation}
 \cD_U:=\{f\in H^{-1/2,-1/4}(\Sigma):
             \supp f\subset\overline U\},
 \label{eq:admissible-dirichlet-data}
\end{equation}
with the inherited norm.  The Neumann output is taken in the restriction space
\begin{equation}
 H^{-3/2,-3/4}(V)
 :=\{F|_V:F\in H^{-3/2,-3/4}(\Sigma)\},
 \label{eq:restriction-space}
\end{equation}
equipped with the quotient norm.

For each \((u_0,f)\in H^{-1}(M)\times\cD_U\), let \(u\in L^2(Q)\) be the
transposition solution of \eqref{eq:original-ibvp}, whose existence and
uniqueness are proved in Appendix~\ref{app:well-posedness}.  We define
\begin{equation}
 \begin{aligned}
 \Lambda_{c,g,q}^{U,V}:H^{-1}(M)\times\cD_U
 &\longrightarrow H^{-1}(M)\times H^{-3/2,-3/4}(V),\\
 (u_0,f)&\longmapsto\bigl(r_Tu,\tau_1u|_V\bigr).
 \end{aligned}
 \label{eq:partial-map}
\end{equation}
Thus the input consists of an initial state and a lateral Dirichlet datum
supported in \(U\), while the output consists of the terminal state and the
Neumann trace measured on \(V\).  The boundedness of
\(\Lambda_{c,g,q}^{U,V}\) is also established in
Appendix~\ref{app:well-posedness}.

\begin{theorem}\label{thm:main}
Let \((M,g)\) be a CTA manifold of dimension \(n\geq3\), and suppose that
\cref{ass:ray-injectivity} holds for \((M_0,g_0)\).  Let
\(q_1,q_2\in C(\overline Q;\C)\) satisfy \(q_1=q_2\) on \(\Sigma\).  If
\[
 \Lambda_{c,g,q_1}^{U,V}=\Lambda_{c,g,q_2}^{U,V},
\]
then \(q_1=q_2\) in \(Q\).
\end{theorem}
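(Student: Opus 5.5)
We first reduce to the product metric. With \(\tilde g=dx_1^2\oplus g_0\), the identity \(c^{(n+2)/4}(-\Delta_g)(c^{-(n-2)/4}v)=(-\Delta_{\tilde g}+q_c)v\) holds with \(q_c=c^{(n+2)/4}\Delta_g(c^{-(n-2)/4})\). Writing \(u=c^{-(n-2)/4}v\) therefore turns \((c^{-1}\partial_t-\Delta_g+q)u=0\) into \((\partial_t-\Delta_{\tilde g}+cq+q_c)v=0\). The conjugation is by a known smooth positive function, so equality of the maps \(\Lambda^{U,V}_{c,g,q_j}\) gives equality of the corresponding partial maps for the potentials \(\tilde q_j=cq_j+q_c\). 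The Neumann traces change only by known multiples of the Dirichlet trace, and that trace vanishes on \(V\setminus\overline U\); on \(U\cap V\) it is known. The front and back faces are unchanged, because \(\nu_g\) and \(\nu_{\tilde g}\) are positive multiples of each other. So it suffices to prove \(\tilde q_1=\tilde q_2\) for the product metric.

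Next we derive the integral identity. Let \(u_1\) solve the forward problem for \(\tilde q_1\) with data \((u_0,f)\), \(\supp f\subset\overline U\). Let \(u_2\) solve the forward problem for \(\tilde q_2\) with the same data, and set \(w=u_1-u_2\). Then \(w\) solves the forward equation with potential \(\tilde q_2\) and source \((\tilde q_2-\tilde q_1)u_1\). It also satisfies \(r_0w=0\), \(\tau_0w=0\), \(r_Tw=0\) and \(\tau_1w=0\) on \(V\). Let \(v\) solve the backward equation \((-\partial_t-\Delta_{\tilde g}+\bar{\tilde q}_2)v=0\). Pairing with \(v\) through the generalized Green formula from Appendix~\ref{app:well-posedness} gives
\[
\int_Q(\tilde q_1-\tilde q_2)u_1\bar v\,dV\,dt=\int_{\Sigma\setminus V}\partial_\nu w\,\overline{v}\,d\sigma\,dt .
\]
The right-hand side lives on the front-face region \(\partial M_+\setminus V_0\), where \(\partial_\nu\phi>0\) strictly.

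We then construct the test solutions. For large \(\tau\) and a parameter \(\lambda=\sigma+i\mu\), we take
\[
u_1=e^{-\tau^2 t}e^{-\tau x_1}\big(e^{-i\lambda x_1}a_\tau(x')+r_1\big),\qquad
v=e^{\tau^2 t}e^{\tau x_1}\big(e^{-i\lambda x_1}b_\tau(x')+r_2\big),
\]
up to the bookkeeping of \(\lambda\) in the exponents. The factor \(e^{\mp\tau^2t}\) cancels the time derivative at leading order. The amplitudes \(a_\tau,b_\tau\) are Gaussian beam quasimodes on \((M_0,g_0)\) concentrated along a non-tangential geodesic \(\gamma\), built in the style of Dos Santos Ferreira–Kenig–Salo–Uhlmann. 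The remainders \(r_1\) and \(r_2\) are obtained from the boundary Carleman estimate for \(\partial_t-\Delta_{\tilde g}\) with weight \(\pm(\tau^2t+\tau x_1)\). They are chosen so that \(\tau_0u_1\) is supported in \(U\), which is the prescribed lateral support: on \(\partial M_-\setminus U_0\) the estimate has the good sign. The bounds are \(\|r_j\|_{L^2(Q)}=o(1)\). The Carleman estimate also controls the boundary term: \(\tau^{1/2}\|e^{-\tau\phi}\partial_\nu w\|_{L^2(\Sigma\cap\{\partial_\nu\phi>0\})}\) is bounded by the weighted source, so the right-hand side of the identity is \(o(1)\).

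Finally we pass to the limit. We multiply by a time cutoff \(\chi(t)\), which is realized by modulating with \(e^{i\kappa t}\) and adjusting the phases, and let \(\tau\to\infty\). The concentration of \(|a_\tau\bar b_\tau|\) on \(\gamma\) then yields
\[
\int_0^L e^{-2\mu r}\int_\R e^{-2i\sigma x_1}\widehat{F}(\kappa,x_1,\gamma(r))\,dx_1\,dr=0,
\]
where \(F=\tilde q_1-\tilde q_2\) is extended by zero and \(\widehat{F}\) denotes its Fourier transform in \(t\); the boundary agreement \(q_1=q_2\) on \(\Sigma\) makes this extension continuous. This is the attenuated transform \(I_a\), with \(a=-2\mu\) small, applied to the partial Fourier transform in \((t,x_1)\). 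Assumption~\ref{ass:ray-injectivity} then gives that this Fourier transform vanishes for all \(\kappa\) and real \(\sigma\), hence \(F=0\). Undoing the conformal change gives \(q_1=q_2\).

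The main obstacle is the third step. We need to produce remainders with \(o(1)\) size while also enforcing the support condition on the Dirichlet trace in the parabolic low-regularity \(H^{-1/2,-1/4}\) setting. This requires the boundary Carleman estimate with the linear-in-\(\tau\) weight to hold uniformly. We would first try to prove it by a Hahn–Banach duality argument from a Carleman estimate with boundary terms for the adjoint operator, in the style of Kenig–Sjöstrand–Uhlmann.
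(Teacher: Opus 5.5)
The overall architecture matches the paper's: conformal reduction, transversal Gaussian beams, Carleman duality to put the Dirichlet trace of the growing solution in \(U\), the integral identity, and concentration on \(\gamma\). The final injectivity step, however, does not work as stated, because you cannot choose the \(x_1\)-frequency and the attenuation independently.

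A factor \(e^{i\eta x_1}\) in the quasimode produces the first-order term \(-2ih\eta\) in \eqref{eq:Lplus-on-ansatz}. Against a beam of real transversal frequency, this leaves an \(O(h)\) residual, and the correction given by \eqref{eq:solvability-forward-estimate} is then only \(O(1)\). The only way to cancel this term is to give the beam the complex frequency \(\alpha s\), with \(s=h^{-1}+i\sigma\) and \(\eta=\alpha^2\sigma\) as in \eqref{eq:s-eta}. That imaginary part is exactly what creates the weight \(e^{-\alpha\sigma r}\). So in the product, the frequency \(2\alpha^2\sigma\) and the attenuation \(2\alpha\sigma\) are driven by a single real parameter.

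A real exponential in \(x_1\), which is what the imaginary part of your \(\lambda\) gives, does not help. It pairs only with an oscillating factor along \(\gamma\), which is not a real attenuation of the kind covered by \cref{ass:ray-injectivity}. Consequently the assumption tells you only that the partial Fourier transform of \(F\) in \(x_1\) vanishes for frequencies in a small interval around \(0\), not for all real frequencies. The missing step is the one the paper uses: this transform is entire in the frequency because \(F\) has compact \(x_1\)-support, so it vanishes identically by analytic continuation.

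The boundary term is also not closed. The bound \(\|r_2\|_{L^2(Q)}=o(1)\) gives no control of the lateral trace of \(v\) on \(\Sigma\setminus V\). Hence the \(O(\tau^{-1/2})\) Carleman bound on the weighted \(\partial_\nu w\) does not make \(\int_{\Sigma\setminus V}\partial_\nu w\,\bar v\) small. You would have to prescribe that trace with \cref{prop:partial-boundary-solvability}(ii) on \(\Sigma_{+,\varepsilon}\supset\Sigma\setminus V\), for small \(\varepsilon\). Once you can do that, you may as well prescribe it so that \(\tau_0v\) is supported in \(V\). That is the paper's route in \cref{prop:exact-go-solutions}: the boundary term then vanishes exactly, and no Carleman estimate for \(w\) is needed.

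Your explicit weights also need fixing. They are reversed: prescribing the trace of \(u_1\) on the back face and controlling \(\partial_\nu w\) on the front face both require \(u_1\) to grow and \(v\) to decay in \(x_1\). In addition, \(e^{\mp\tau^2t}\) does not cancel the time derivative. If the time factor absorbed the \(x_1\)-exponential completely, no transversal frequency would be left for the beam.

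The paper uses \(e^{\beta^2t/h^2}\) with \(\beta^2\in(1/3,1)\). The beam then has frequency \(\alpha/h\), with \(\alpha=\sqrt{1-\beta^2}>0\), and the Carleman computation relies on \(3\beta^2-1>0\). Your factor \(e^{-\tau^2t}\) in the forward solution corresponds to \(\beta^2=-1\). It needs a beam of frequency \(\sqrt2\tau\), and the Carleman estimate would have to be redone for that range.
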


\begin{remark}\label{rem:boundary-agreement}
The condition \(q_1=q_2\) on \(\Sigma\) is an a priori boundary agreement
assumption.  It is used only in the final step of the proof to ensure that the
zero extension of \(q_2-q_1\) from \(M\) to \(\R\times M_0\) is continuous.
The Carleman estimates, partial-boundary solvability, and geometric optics
constructions require only bounded potentials.
\end{remark}

Inverse coefficient problems for parabolic equations have been studied
extensively in Euclidean domains.  Early works concerning the recovery of
time-independent zeroth-order coefficients include
\cite{Isakov1991,AvdoninSeidman1995}.  The determination of coefficients
depending on both space and time introduces additional difficulties due to the
causal nature of the parabolic equation.  Choulli and Kian first established
stability for a time-dependent coefficient of the form \(\sigma(t)f(x)\) in
\cite{ChoulliKian2013}, and later obtained logarithmic stability for a general
time-dependent zeroth-order coefficient from a partial Dirichlet-to-Neumann map
in \cite{ChoulliKian2018}.  Uniqueness from partial boundary measurements was
subsequently studied by Fan and Duan in \cite{FanDuan2021}.  For earlier
coefficient recovery for heat equations from boundary measurements on two
boundary portions, see Canuto and Kavian \cite{CanutoKavian2001}.  More
recently, Feizmohammadi, Kian, and Uhlmann studied local partial-data problems
for reaction--diffusion and heat equations using spherical quasimodes and
proved, among other results, injectivity of the linearized partial
Dirichlet-to-Neumann map for the heat equation
\cite{FeizmohammadiKianUhlmann2024}.  Related inverse
problems for first-order perturbations and convection--diffusion equations have
also received considerable attention; see, for instance,
\cite{BellassouedRassas2020,SahooVashisth2020,BellassouedFraj2021,
FeizmohammadiKianUhlmann2022} and the references therein.
For a genuinely local-data problem in a Euclidean domain, Kumar and Purohit
recovered time-dependent convection and density coefficients, up to the natural
gauge, under a flatness assumption on the inaccessible part of the boundary
\cite{KumarPurohit2025}.  Their proof is based on a reflection argument.  This
differs from the present CTA setting, where no flat inaccessible boundary
portion is assumed and Gaussian beam quasimodes are used to handle the
transversal geometry.

The use of boundary Carleman estimates to isolate accessible and inaccessible
boundary portions has important precedents in elliptic partial-data inverse
problems, notably \cite{BukhgeimUhlmann2002,KenigSjostrandUhlmann2007}.
The corresponding geometric inverse problems on Riemannian manifolds are
comparatively less developed.  An important class of geometries in this
context is provided by manifolds admitting limiting Carleman weights.  Dos
Santos Ferreira, Kenig, Salo, and Uhlmann introduced the admissible geometric
framework and established uniqueness results for anisotropic inverse problems
by reducing the problem to an attenuated geodesic ray transform on a simple
transversal manifold \cite{DosSantosFerreiraKenigSaloUhlmann2009}.  Partial
data problems on such manifolds were further investigated by Kenig and Salo
\cite{KenigSalo2013}, while inverse problems for advection--diffusion operators
in admissible geometries were studied by Krupchyk and Uhlmann
\cite{KrupchykUhlmann2018}.  More recently, Mishra, Purohit, and Vashisth
considered a partial data inverse problem for a time-dependent
convection--diffusion equation on admissible manifolds and obtained recovery of
the time-dependent coefficients modulo the natural gauge invariance
\cite{MishraPurohitVashisth2025}.  In these admissible settings, the simplicity
of the transversal manifold plays an important role, both in the construction
of special solutions and in the injectivity of the associated geodesic ray
transforms.

A substantially more general geometric setting was introduced in the study of
the Calder\'on problem on conformally transversally anisotropic manifolds.  In
\cite{DosSantosFerreiraEtAl2016}, Dos Santos Ferreira, Kurylev, Lassas, and Salo
removed the simplicity assumption on the transversal manifold and constructed
complex geometrical optics solutions using Gaussian beam quasimodes
concentrating near non-tangential geodesics.  Their argument relates boundary
measurements to Fourier transforms in the Euclidean direction and geodesic ray
transforms, or more generally semiclassical defect measures, on the transversal
manifold.  In particular, uniqueness follows whenever the relevant geodesic ray
transform is injective.  Related ideas have recently been adapted to
time-dependent inverse problems for hyperbolic equations on CTA manifolds.
Liu, Saksala, and Yan recovered time-dependent damping and potential terms from
partial Cauchy data under an injectivity assumption for an attenuated geodesic
ray transform in \cite{LiuSaksalaYan2024}; see also
\cite{LiuSaksalaYan2025} for the recovery of a time-dependent potential on CTA
manifolds.

Very recently, Liu and Purohit studied the recovery of time-dependent
convection and density coefficients on CTA manifolds from partial measurements
\cite{LiuPurohit2026}.  Their input--output operator, however, allows
Dirichlet data prescribed on the full lateral boundary.  In the scalar
potential problem considered here, the Dirichlet inputs themselves are
restricted to the prescribed neighborhood \(U\).  Consequently, even after
setting the convection term equal to zero in their result, equality of the
restricted input--output maps considered here is insufficient to invoke their
full-input uniqueness theorem.  The present result therefore addresses the
scalar-potential case with locally supported Dirichlet inputs.  The additional
ingredient is the construction of two geometric optics solutions whose lateral
traces are supported in \(U\) and \(V\), respectively, so that the remaining
lateral boundary term in the integral identity vanishes exactly.  Accordingly,
the two uniqueness results are not direct special cases of one another: their
theorem treats simultaneous first- and zeroth-order coefficient recovery with
full lateral Dirichlet input, whereas ours treats scalar-potential recovery
under a genuinely local input condition.

The present work develops a local-input scalar-potential counterpart of this
Gaussian-beam approach for parabolic equations.  In
contrast with inverse parabolic results on admissible manifolds, we do not
impose simplicity on the transversal manifold \((M_0,g_0)\).  Instead, we
assume the injectivity of a family of attenuated geodesic ray transforms as in
\cref{ass:ray-injectivity}.  This change of geometry makes the usual
construction based on global polar normal coordinates unavailable and leads
naturally to the use of Gaussian beam quasimodes concentrated along
non-tangential geodesics.  Combining these quasimodes with boundary Carleman
estimates for the forward and adjoint parabolic operators, we construct
geometric optics solutions whose lateral traces are supported in the prescribed
neighborhoods \(U\) and \(V\).  The resulting integral identity then converts
equality of the partial input--output maps into the vanishing of the
corresponding attenuated geodesic ray transforms, which yields the desired
uniqueness.

The proof proceeds as follows.  In \cref{sec:preliminaries} we reduce both the
direct and inverse problems on the CTA metric to the product case.  In
\cref{sec:gaussian-beams} we construct transversal Gaussian beam quasimodes.
Boundary Carleman solvability then gives exact geometric optics solutions with
lateral traces supported in \(U\) and \(V\), respectively; see
\cref{sec:go-solutions}.  In \cref{sec:uniqueness}, the equality of the partial
maps produces an integral identity, and \cref{ass:ray-injectivity} completes the
argument.  The well-posedness and Carleman estimates are proved in the appendices.

\section{Conformal reduction to the product case}
\label{sec:preliminaries}

If \(H\) is a Hilbert space, \(H'\) denotes its continuous dual.  We write
\((F,u)_{H',H}\) for duality.  Unless explicitly stated otherwise, complex inner
products and duality pairings are conjugate-linear in the first argument and
linear in the second.  The Riemannian volume and boundary measures are denoted by
\(dV_g\) and \(dS_g\), and
\[
 \nabla_gv=g^{jk}\partial_{x_j}v\,\partial_{x_k},
 \qquad \partial_\nu v=\langle\nabla_gv,\nu\rangle_g.
\]
The analogous notation with subscript \(g_0\) is used on \(M_0\).

The purpose of this section is to reduce both the direct problem and the inverse
problem to the case \(c=1\).  The functional-analytic justification for
transposition solutions and generalized traces is deferred entirely to
Appendix~\ref{app:well-posedness}.

Set
\begin{equation}
 \widetilde g:=c^{-1}g=dx_1^2\oplus g_0,
 \qquad \rho:=\frac{n-2}{4},
 \label{eq:reduced-metric}
\end{equation}
and define
\begin{equation}
 \widetilde q
 :=c\bigl(q-c^\rho\Delta_g(c^{-\rho})\bigr)
 =cq+c^{-\rho}\Delta_{\widetilde g}(c^\rho).
 \label{eq:transformed-potential}
\end{equation}
The conformal relations are
\begin{equation}
 dV_g=c^{n/2}dV_{\widetilde g},\qquad
 dS_g=c^{(n-1)/2}dS_{\widetilde g},\qquad
 \nu_g=c^{-1/2}\nu_{\widetilde g}.
 \label{eq:conformal-measures}
\end{equation}
A direct calculation gives
\begin{equation}
 c^{(n+2)/4}\cL_{c,g,q}(c^{-\rho}v)
  =(\partial_t-\Delta_{\widetilde g}+\widetilde q)v,
 \qquad
 \cL_{c,g,q}:=c^{-1}\partial_t-\Delta_g+q.
 \label{eq:conformal-conjugation}
\end{equation}
Hence \(u\) corresponds to \(\widetilde u=c^\rho u\), with transformed data
\begin{equation}
 \widetilde u_0=c^\rho u_0,\qquad \widetilde f=c^\rho f.
 \label{eq:transformed-inputs}
\end{equation}
More precisely, \(u\) solves \eqref{eq:original-ibvp} if and only if
\(\widetilde u=c^\rho u\) solves
\begin{equation}
 \begin{cases}
 (\partial_t-\Delta_{\widetilde g}+\widetilde q)\widetilde u=0
     &\text{in }Q,\\
 \widetilde\tau_0\widetilde u=\widetilde f
     &\text{on }\Sigma,\\
 \widetilde r_0\widetilde u=\widetilde u_0
     &\text{in }M.
 \end{cases}
 \label{eq:reduced-ibvp}
\end{equation}
Thus the direct problem for the original CTA metric is equivalent to a direct
problem with product metric.
The signs of the front and back faces do not change, since
\(\partial_{\nu_g}\phi=c^{-1/2}\partial_{\nu_{\widetilde g}}\phi\).  Thus the
same sets \(U\) and \(V\) are used before and after the reduction.

The terminal and Neumann traces transform as
\begin{align}
 \widetilde r_T\widetilde u&=c^\rho r_Tu,
 \label{eq:transformed-terminal}\\
 \widetilde\tau_1\widetilde u
 &=c^{\rho+1/2}\tau_1u
   +\rho c^{-1}(\partial_{\nu_{\widetilde g}}c)\widetilde f.
 \label{eq:transformed-neumann}
\end{align}
The graph-space justification of these formulas, including transposition
solutions, is supplied in \cref{subsec:appendix-transfer}.

For brevity, write
\[
 \Lambda_{\widetilde g,\widetilde q}^{U,V}
 :=\Lambda_{1,\widetilde g,\widetilde q}^{U,V}.
\]
The input transformation \eqref{eq:transformed-inputs} is bijective, and the
second term in \eqref{eq:transformed-neumann} is known and independent of the
potential.  We therefore obtain the following equivalence.

\begin{proposition}\label{prop:map-reduction}
Let \(q_1,q_2\in C(\overline Q;\C)\), and let \(\widetilde q_1,\widetilde q_2\) be
defined by \eqref{eq:transformed-potential}.  Then
\begin{equation}
 \Lambda_{c,g,q_1}^{U,V}=\Lambda_{c,g,q_2}^{U,V}
 \quad\Longleftrightarrow\quad
 \Lambda_{\widetilde g,\widetilde q_1}^{U,V}
 =\Lambda_{\widetilde g,\widetilde q_2}^{U,V}.
 \label{eq:map-equivalence}
\end{equation}
Moreover,
\begin{equation}
 \widetilde q_1-\widetilde q_2=c(q_1-q_2).
 \label{eq:potential-difference-transform}
\end{equation}
\end{proposition}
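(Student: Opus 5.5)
The plan is to write the reduced map as a fixed composition of known, potential-independent operators with the original map, and then read off both claims. First, the identity \eqref{eq:potential-difference-transform} is immediate from \eqref{eq:transformed-potential}: the correction term \(c^{-\rho}\Delta_{\widetilde g}(c^\rho)\) depends only on \(c\) and \(g\), so subtracting the two definitions gives \(\widetilde q_1-\widetilde q_2=c(q_1-q_2)\).

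For the equivalence \eqref{eq:map-equivalence}, introduce the multiplication operators
\[
 J(u_0,f):=(c^\rho u_0,c^\rho f),\qquad
 K(w,h):=\bigl(c^\rho w,\;c^{\rho+1/2}h\bigr),
\]
together with the bounded operator \(B f:=\rho c^{-1}(\partial_{\nu_{\widetilde g}}c)\,(c^\rho f)|_V\). These depend only on the known conformal factor and the geometry, not on \(q\). Since \(c\) is smooth and strictly positive on \(\overline Q\), multiplication by \(c^{\pm\rho}\) and \(c^{\pm(\rho+1/2)}\) is an isomorphism of \(H^{-1}(M)\), of \(H^{-1/2,-1/4}(\Sigma)\), and of \(H^{-3/2,-3/4}(\Sigma)\). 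These are defined by duality from the corresponding isomorphisms of \(H_0^1(M)\) and \(H_0^{r,s}(\Sigma)\). Multiplication preserves supports, so it maps \(\cD_U\) onto itself, and it commutes with restriction to \(V\), so it induces an isomorphism of the quotient space \(H^{-3/2,-3/4}(V)\). Hence \(J\) and \(K\) are bijections.

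Now take \((u_0,f)\in H^{-1}(M)\times\cD_U\), let \(u\) be the transposition solution of \eqref{eq:original-ibvp}, and set \(\widetilde u=c^\rho u\). Assuming from \cref{subsec:appendix-transfer} that \(\widetilde u\) is the transposition solution of \eqref{eq:reduced-ibvp} with data \(J(u_0,f)\), and that \eqref{eq:transformed-terminal} and \eqref{eq:transformed-neumann} hold for the generalized traces, we obtain
\[
 \Lambda_{\widetilde g,\widetilde q}^{U,V}\circ J
 =K\circ\Lambda_{c,g,q}^{U,V}+(0,B\circ\pi_2),
\]
where \(\pi_2(u_0,f)=f\). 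Restriction to \(V\) of the Neumann identity is harmless because all terms are multiplications by smooth functions.

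If the original maps agree for \(q_1\) and \(q_2\), the right-hand sides agree, so the reduced maps agree on the range of \(J\), which is everything. Conversely, equality of the reduced maps gives \(K\Lambda_{c,g,q_1}^{U,V}=K\Lambda_{c,g,q_2}^{U,V}\), and injectivity of \(K\) yields equality of the original maps.

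The main obstacle is justifying the transfer at the level of transposition solutions and generalized traces rather than smooth solutions. I would handle it by testing against the adjoint problem: \(v\) is an admissible test function for the reduced adjoint problem if and only if \(c^{(n+2)/4}\cdot\) (appropriately conjugated) \(v\) is one for the original problem, by \eqref{eq:conformal-conjugation} and its formal adjoint. The volume relation \eqref{eq:conformal-measures} then converts one defining identity into the other. The boundary pairings pick up exactly the factors in \eqref{eq:transformed-neumann}, including the extra term coming from \(\partial_\nu\) falling on \(c^\rho\). Since the paper defers this to the appendix, I would cite it there.
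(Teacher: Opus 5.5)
Your proposal is correct and follows the paper's argument: \eqref{eq:potential-difference-transform} is read off from \eqref{eq:transformed-potential}, and the equivalence comes from the bijective input transformation together with \eqref{eq:transformed-terminal} and \eqref{eq:transformed-neumann}, whose extra Neumann term is potential-independent; the transposition-level justification is taken from \cref{subsec:appendix-transfer}, exactly as in the paper, and your identity \(\Lambda_{\widetilde g,\widetilde q}^{U,V}\circ J=K\circ\Lambda_{c,g,q}^{U,V}+(0,B\circ\pi_2)\) simply makes the same reasoning explicit. One small correction to your closing sketch: adjoint test functions correspond via \(z=c^{-\rho}\widetilde z\), the same weight as for solutions, while \(c^{(n+2)/4}\) multiplies the adjoint source, and the powers \(c^{-\rho}\), \(c^{-(n+2)/4}\) and \(c^{n/2}\) (from \(dV_g=c^{n/2}dV_{\widetilde g}\)) then cancel in the volume pairing.
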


\begin{proof}
The conjugation identity \eqref{eq:conformal-conjugation} and
\eqref{eq:transformed-inputs} give a bijective correspondence between the
inputs and solutions of the two direct problems.  The output relations
\cref{eq:transformed-terminal,eq:transformed-neumann} then show that equality
of the two original partial input--output maps is equivalent to equality of the
two reduced maps; the additional term in \eqref{eq:transformed-neumann} depends
only on the known conformal factor and the prescribed Dirichlet datum.  Finally,
\eqref{eq:transformed-potential} gives
\(\widetilde q_1-\widetilde q_2=c(q_1-q_2)\).  The validity of these
transformations for generalized traces and transposition solutions is proved in
Appendix~\ref{app:well-posedness}.
\end{proof}

In view of \cref{prop:map-reduction}, from now on we assume
\begin{equation}
 g=dx_1^2\oplus g_0
 \label{eq:product-metric-standing}
\end{equation}
and omit all tildes.  We use throughout
\begin{equation}
 P_q:=\partial_t-\Delta_g+q,\qquad
 P_q^*:=-\partial_t-\Delta_g+\overline q,\qquad
 P_q^{\mathrm t}:=-\partial_t-\Delta_g+q=P_{\overline q}^*.
 \label{eq:operators-adjoint-transpose}
\end{equation}
The last operator is the transpose for the complex-bilinear Green identity.
Finally, define the graph spaces needed in the remainder of the paper by
\begin{equation}
 \cH_\pm(Q):=
 \{u\in L^2(Q):(\pm\partial_t-\Delta_g)u\in L^2(Q)\},
 \qquad
 \|u\|_{\cH_\pm}^2
 :=\|u\|_{L^2(Q)}^2
 +\|(\pm\partial_t-\Delta_g)u\|_{L^2(Q)}^2.
 \label{eq:reduced-graph-spaces}
\end{equation}
The generalized lateral and temporal trace maps on these graph spaces are
constructed in Appendix~\ref{app:well-posedness}.

\section{Parabolic Gaussian beam quasimodes}
\label{sec:gaussian-beams}

We now construct Gaussian beam quasimodes adapted to the forward parabolic
operator and its adjoint.  The concentration occurs in the transversal variables,
but we state the result for the conjugated parabolic operators used later.  The
construction follows the Gaussian beam arguments in
\cite{Ralston1982,DosSantosFerreiraEtAl2016,LiuSaksalaYan2025}.

Throughout Sections~\ref{sec:gaussian-beams}--\ref{sec:uniqueness} and
Appendix~\ref{app:carleman}, we work with a semiclassical parameter
\(h\in(0,h_0]\), where \(h_0>0\) is sufficiently small, and all asymptotic
estimates are understood as \(h\to0\).  We write
the conjugated operators in semiclassically scaled form; in particular, spatial
first derivatives are measured by \(h\nabla_g\), while all \(L^2\)-norms are
the standard unscaled norms.  For the general semiclassical notation and
conventions, we refer to \cite{Zworski2012}.

Fix
\begin{equation}
 \beta\in\left(\frac1{\sqrt3},1\right),\qquad
 \alpha:=\sqrt{1-\beta^2},
 \label{eq:beta-alpha}
\end{equation}
and distinguish the \(h\)-dependent lower-case weight from the spatial weight
\(\phi\) by writing
\begin{equation}
 \varphi_h(t,x):=\phi(x)+\frac{\beta^2t}{h}
 =x_1+\frac{\beta^2t}{h},\qquad
 \Phi_h:=\frac{\varphi_h}{h}
 =\frac{x_1}{h}+\frac{\beta^2t}{h^2}.
 \label{eq:parabolic-weights}
\end{equation}
In particular,
\begin{equation}
 \partial_\nu\varphi_h=\partial_\nu\phi=\partial_\nu x_1.
 \label{eq:weight-normal-derivative}
\end{equation}
The semiclassically scaled conjugated operators are
\begin{align}
 \cL_{+,h}
 &:=h^2e^{-\Phi_h}P_qe^{\Phi_h}
 =h^2\partial_t-h^2\Delta_g-2h\partial_{x_1}
   -\alpha^2+h^2q,
 \label{eq:L-plus}\\
 \cL_{-,h}
 &:=h^2e^{\Phi_h}P_q^*e^{-\Phi_h}
 =-h^2\partial_t-h^2\Delta_g+2h\partial_{x_1}
   -\alpha^2+h^2\overline q.
 \label{eq:L-minus}
\end{align}

\subsection{Construction and concentration}

Let \(\gamma:[0,L]\to M_0\) be a unit-speed non-tangential geodesic.  Set
\begin{equation}
 s=\frac1h+i\sigma,\qquad \eta=\alpha^2\sigma,\qquad \sigma\in\R.
 \label{eq:s-eta}
\end{equation}
Whenever the signs \(\pm\) occur in one statement, the upper and lower signs are
taken consistently.

\begin{proposition}[Parabolic Gaussian beam quasimodes]
\label{prop:parabolic-quasimodes}
Let \(q\in L^\infty(Q)\), let \(\gamma:[0,L]\to M_0\) be a unit-speed
non-tangential geodesic, fix \(\sigma\in\R\), and prescribe
\(\zeta_\pm\in C^\infty([0,T])\).  There exist families
\[
 V_{\pm,h}\in C^\infty([0,T]\times\R\times M_0)
\]
whose restrictions to \(\overline Q\) satisfy, as \(h\to0\),
\begin{equation}
 \|V_{\pm,h}\|_{L^2(Q)}=O(1),\qquad
 \|\cL_{\pm,h}V_{\pm,h}\|_{L^2(Q)}=O(h^{3/2}).
 \label{eq:quasimode-estimates}
\end{equation}
The estimates are uniform for \(\sigma\) in a fixed compact subset of \(\R\).
\end{proposition}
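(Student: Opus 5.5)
The plan is to separate variables: an explicit exponential in \((t,x_1)\) times a Gaussian beam on \((M_0,g_0)\) concentrated along \(\gamma\), following \cite{DosSantosFerreiraEtAl2016,LiuSaksalaYan2025}. I treat \(V_{+,h}\); the case \(V_{-,h}\) follows from the same construction with \(t\mapsto -t\), \(x_1\mapsto -x_1\) and \(q\mapsto\overline q\), matching the sign changes between \eqref{eq:L-plus} and \eqref{eq:L-minus}. The ansatz is
\[
 V_{+,h}(t,x_1,x')=e^{i\lambda x_1+i\omega t}\,\zeta_+(t)\,v_h(x'),
\]
with constants \(\lambda=\lambda(h,\sigma)\) and \(\omega=\omega(h,\sigma)\) to be fixed, and \(v_h\) a transversal quasimode. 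Because \(g=dx_1^2\oplus g_0\), inserting this into \eqref{eq:L-plus} gives
\[
 \cL_{+,h}V_{+,h}=e^{i\lambda x_1+i\omega t}\Bigl[\zeta_+\bigl(-h^2\Delta_{g_0}v_h-\mu_h^2v_h\bigr)+h^2\zeta_+'v_h+h^2q\,\zeta_+v_h\Bigr],
\]
where
\[
 \mu_h^2:=\alpha^2+2ih\lambda-h^2\lambda^2-ih^2\omega .
\]
The target is \(\mu_h=\beta+O(h)\), with real part equal to \(\beta\) up to \(O(h^2)\). I would pick \(\lambda\) and \(\omega\), with \(\eta=\alpha^2\sigma\) entering through \(\omega\), so that \(\mu_h^2=h^2(s')^2\) for a complex frequency \(s'=\beta/h+i\sigma'\) with \(\sigma'\) bounded. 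These are algebraic equations whose coefficients are smooth in \(h\) and \(\sigma\). Their solutions keep \(\operatorname{Im}\lambda=O(1)\) and \(\operatorname{Im}\omega=O(1)\), so the exponential prefactor is bounded on the compact set \(\overline Q\), uniformly for \(\sigma\) in compact sets.

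The main step is then a Gaussian beam \(v_h\) on \(M_0\) with
\[
 \|v_h\|_{L^2(M_0)}\asymp1,\qquad \|(-h^2\Delta_{g_0}-h^2(s')^2)v_h\|_{L^2(M_0)}=O(h^{3/2}).
\]
I would construct it as follows.

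1. Embed \((M_0,g_0)\) in a closed manifold or a slightly larger compact manifold, and extend \(\gamma\) to \([-\delta,L+\delta]\). Non-tangentiality guarantees that the extended curve exits \(M_0\) and does not return on the added pieces.

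2. Cover \(\gamma\) by finitely many Fermi coordinate charts \((r,y)\), \(y\in\R^{n-2}\). These are injective near each segment even when \(\gamma\) self-intersects, because a compact unit-speed geodesic meets itself only finitely often, transversally.

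3. In each chart use the WKB form
\[
 v=h^{-(n-2)/4}e^{is'\Theta}a,\qquad \Theta=r+\tfrac12H(r)y\cdot y,
\]
where \(H\) solves the matrix Riccati equation with \(\operatorname{Im}H>0\), so that \(|\nabla\Theta|^2-1=O(|y|^3)\). The amplitude is
\[
 a=a_0(r)\chi(y/\delta'),\qquad a_0(r)=e^{\sigma' r}(\det Y(r))^{-1/2},
\]
solving the transport equation along \(\gamma\). The factor \(e^{\sigma' r}\) is the attenuation that will later produce \(I_a\) in \eqref{eq:attenuated-ray-transform}.

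4. Glue the local beams with a partition of unity subordinate to the charts. Since the phases and amplitudes agree on overlaps, which are intersections along the same geodesic segment, the gluing creates no additional error.

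The standard estimates \(\int|y|^k e^{-c|y|^2/h}\,dy\lesssim h^{(n-2)/2+k/2}\) control the residual. The phase defect \(O(|y|^3)\) multiplied by the scale \(|s'|^2h^2=O(1)\) contributes \(O(h^{3/2})\). The transport defect \(h\,O(|y|)\) also contributes \(O(h^{3/2})\). The terms \(h^2\Delta a\) and the cutoff terms are \(O(h^2)\) or exponentially small. The normalization \(h^{-(n-2)/4}\) gives \(\|v_h\|_{L^2}=O(1)\), and since \(\operatorname{Re}(is')\) is bounded, \(e^{is'\Theta}\) stays controlled.

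Finally, in the display above the terms \(h^2\zeta_+'v_h\) and \(h^2q\zeta_+v_h\) are \(O(h^2)\) in \(L^2(Q)\), because \(q\in L^\infty\) and \(\zeta_+\in C^\infty([0,T])\). Integrating over the bounded range of \(t\) and \(x_1\) on \(\overline Q\) yields \eqref{eq:quasimode-estimates}. Every constant depends continuously on \(\sigma\), which gives the uniformity on compact sets.

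I expect the main obstacle to be the bookkeeping in the first step: choosing \(\lambda\) and \(\omega\) so that the transversal operator is exactly of Helmholtz type with a frequency of the form \(\beta/h+O(1)\). The role of the constraint \(\beta^2+\alpha^2=1\) and of \(\eta=\alpha^2\sigma\) is to make the \(O(h^{-1})\) and \(O(1)\) parts of \(\mu_h^2\) cancel consistently. My first attempt would be the direct choice \(\lambda=\sigma\) and \(\omega=\eta/h^2\) or \(\omega=\eta\), adjusting them by \(O(h)\) corrections until the residual constant lies in \(O(h^2)\). The gluing near self-intersections of \(\gamma\) is technical but standard, as in \cite{DosSantosFerreiraEtAl2016}.
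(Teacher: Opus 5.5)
Your skeleton matches the paper's proof: an explicit exponential in \((t,x_1)\) times a transversal Gaussian beam, with Fermi charts, a Riccati phase, gluing along \(\gamma\), and Gaussian scaling of the residual. However, the step you single out as the crux is done incorrectly, and as written it does not give \eqref{eq:quasimode-estimates}.

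\textbf{The transversal frequency.} It is \(\alpha/h\), not \(\beta/h\). Your own formula \(\mu_h^2=\alpha^2+2ih\lambda-h^2\lambda^2-ih^2\omega\) gives \(\mu_h=\alpha+O(h)\) whenever \(\lambda,\omega\) are bounded, which covers your first-attempt choices plus \(O(h)\) corrections. The target \(\mu_h=\beta+O(h)\) cannot be reached with a bounded prefactor \(e^{i\lambda x_1+i\omega t}\). If \(\operatorname{Im}\lambda,\operatorname{Im}\omega=O(1)\), taking real parts forces \((\operatorname{Re}h\lambda)^2=\alpha^2-\beta^2+O(h)\). This is impossible for \(\beta>1/\sqrt2\), which is part of the admissible range \eqref{eq:beta-alpha}. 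For \(\beta<1/\sqrt2\) it requires \(\operatorname{Re}\lambda\sim h^{-1}\) and \(\operatorname{Re}\omega\sim h^{-2}\).

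The alternative \(\omega=\eta/h^2\) also fails. It puts an \(O(1)\) imaginary part into \(\mu_h^2\), so \(\operatorname{Im}s'\sim h^{-1}\). The beam then carries a factor \(e^{\pm cr/h}\) along \(\gamma\), and the bound \(|e^{is'\Theta}|\le Ce^{-c|y|^2/h}\) is lost.

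The paper instead takes \(\lambda=\eta=\alpha^2\sigma\), no time oscillation, and \(s=h^{-1}+i\sigma\). So \(\eta\) enters through \(\lambda\), not \(\omega\). The identity \(\alpha^2(h^2s^2-1)+h^2\eta^2-2ih\eta=-\alpha^2\beta^2\sigma^2h^2\) shows that the transversal operator is \(P_{\perp,s}=-h^2\Delta_{g_0}-\alpha^2h^2s^2\) up to a constant \(O(h^2)\) term. That term is harmless; if you want an exact Helmholtz form, multiply the time amplitude by \(e^{\alpha^2\beta^2\sigma^2t}\).

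\textbf{The amplitude.} Your amplitude is inconsistent with your complex phase. For \(v=e^{is'\Theta}a\) and the operator \(-h^2\Delta_{g_0}-h^2s'^2\), the transport equation on \(\gamma\) is \(2\dot a_0+\operatorname{tr}H\,a_0=0\). It is solved by your \((\det Y)^{-1/2}\) alone. The extra factor \(e^{\sigma' r}\) violates it by \(2\sigma' a_0\), which leaves the term \(-2ih^2s'\sigma'\,v\) in the residual. Its \(L^2\)-norm is of exact order \(h\) when \(\sigma'\neq0\), so your residual is only \(O(h)\).

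The \(O(h^{3/2})\) rate is precisely what is needed later. After division by \(h^2\) it gives the \(O(h^{-1/2})\) source and hence the \(O(h^{1/2})\) remainders in \cref{prop:exact-go-solutions}; an \(O(h)\) residual would give only \(O(1)\) remainders.

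Moreover, the attenuation is already contained in the phase: on \(\gamma\) one has \(|e^{is'\Theta}|=e^{-\sigma' r}\), and your factor would cancel it. In the paper the phase is \(\alpha(r+\tfrac12 H y\cdot y)\), the amplitude solves \(\dot a_0+\tfrac12\operatorname{tr}H\,a_0=0\), and the weight \(e^{-2\alpha\sigma r}\) in \(|v_s|^2\) comes entirely from \(|e^{is\Theta}|\).

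With these two corrections your argument coincides with the paper's.
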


\begin{proposition}[Concentration property]
\label{prop:quasimode-concentration}
Under the assumptions of \cref{prop:parabolic-quasimodes}, the families
\(V_{\pm,h}\) may be normalized so that, for every
\(F\in C([0,T]\times\R\times M_0)\) compactly supported in \(x_1\),
\begin{align}
 &\lim_{h\to0}\int_0^T\int_\R\int_{M_0}
 F(t,x_1,x')V_{+,h}(t,x_1,x')V_{-,h}(t,x_1,x')
 \,dV_{g_0}(x')\,dx_1\,dt \notag\\
 &\quad=\int_0^T\int_\R\int_0^L
 F(t,x_1,\gamma(r))\zeta_+(t)\zeta_-(t)
 e^{2i\alpha^2\sigma x_1}e^{-2\alpha\sigma r}
 \,dr\,dx_1\,dt.
 \label{eq:quasimode-concentration}
\end{align}
\end{proposition}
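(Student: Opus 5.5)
The plan is to make the construction behind \cref{prop:parabolic-quasimodes} explicit and then compute the limit of the product directly. I will take the quasimodes in separated form
\[
 V_{\pm,h}(t,x_1,x')=\zeta_\pm(t)\,e^{i\alpha^2\sigma x_1}\,v_{\pm,h}(x'),
\]
where \(v_{\pm,h}\) are transversal Gaussian beams along \(\gamma\). The geodesic is first extended to a slightly larger closed manifold \(N\supset M_0\); non-tangentiality ensures that near \(M_0\) it meets itself only at finitely many times.

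\textbf{Phases and amplitudes.} Locally, near a segment of \(\gamma\) in Fermi coordinates \((r,y)\), I write
\[
 v_{\pm,h}=h^{-(n-2)/4}\,e^{\pm i\alpha\Theta(r,y)/h}\,a_\pm(r,y)\,\chi(y/\delta).
\]
The phase satisfies
\[
 \Theta(r,0)=r,\qquad \nabla\Theta(r,0)=\dot\gamma(r),\qquad \operatorname{Im}\nabla^2\Theta(r,0)>0 .
\]
The Hessian of \(\Theta\) along \(\gamma\) is determined by the matrix Riccati equation, so \(|d\Theta|_{g_0}^2-1\) vanishes to third order on \(\gamma\).

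Inserting this ansatz into \eqref{eq:L-plus} gives the following contributions:
\begin{itemize}
 \item the eikonal term \(\alpha^2|d\Theta|^2\), which cancels \(-\alpha^2\);
 \item the \(x_1\)-factor, which gives \(-2ih\alpha^2\sigma+O(h^2)\);
 \item the first-order transversal terms, \(-ih\alpha(2\langle d\Theta,da_+\rangle+\Delta_{g_0}\Theta\,a_+)\).
\end{itemize}
The \(O(h)\) terms cancel if
\[
 \partial_ra_+=-\bigl(\alpha\sigma+\tfrac12\Delta_{g_0}\Theta\bigr)a_+ \quad\text{on }\gamma .
\]
The same computation for \(\cL_{-,h}\), with phase \(-\alpha\Theta\) and drift \(+2h\partial_{x_1}\), gives the same transport equation for \(a_-\).

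Hence, on \(\gamma\),
\[
 a_+a_-=e^{-2\alpha\sigma r}\,e^{-\int_0^r\Delta_{g_0}\Theta},
\]
and \(\int_0^r\Delta_{g_0}\Theta\) equals \(-\log\det Y(r)\) up to a constant. The terms \(h^2\partial_t\zeta_\pm\), \(h^2\alpha^4\sigma^2\) and \(h^2q\) are \(O(h^2)\). The remaining Gaussian-beam errors, coming from the Taylor remainders of the eikonal and transport equations weighted against \(e^{-c|y|^2/h}\), are \(O(h^{3/2})\) in \(L^2\). All of this is consistent with \cref{prop:parabolic-quasimodes}, uniformly for \(\sigma\) in compact sets.

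\textbf{The product.} In the product \(V_{+,h}V_{-,h}\) the real phases cancel exactly, leaving
\[
 \zeta_+\zeta_-\,e^{2i\alpha^2\sigma x_1}\,h^{-(n-2)/2}\,e^{-2\alpha\operatorname{Im}\Theta/h}\,a_+a_-\,\chi^2 ,
\]
with \(\operatorname{Im}\Theta(r,y)=\tfrac12\operatorname{Im}\langle H(r)y,y\rangle+O(|y|^3)\). Substituting \(y=\sqrt h\,z\) and applying dominated convergence in \(y\) gives, for each \(r\),
\[
 \int h^{-(n-2)/2}e^{-2\alpha\operatorname{Im}\Theta/h}\,F\,dy\;\to\;c_{\alpha}\,(\det\operatorname{Im}H(r))^{-1/2}\,F(\cdot,\gamma(r)).
\]
The standard Gaussian-beam identity \(\det\operatorname{Im}H(r)\,|\det Y(r)|^2=\mathrm{const}\) makes \((\det\operatorname{Im}H)^{-1/2}\,e^{-\int_0^r\Delta\Theta}\) constant in \(r\). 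After normalizing the amplitudes by this constant, the limit is exactly \eqref{eq:quasimode-concentration}. Integration in \(t\) and \(x_1\) is harmless, since \(F\) is continuous and compactly supported in \(x_1\).

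\textbf{Main obstacle: self-intersections.} The hardest step is the global gluing across self-intersections of \(\gamma\). I would cover \([-\epsilon,L+\epsilon]\) by finitely many intervals on which \(\gamma\) is injective, build local beams on each, and glue them with a partition of unity in \(r\). Within one branch the consecutive local pieces agree to the required order by uniqueness for the Riccati and transport ODEs. The product \(V_{+,h}V_{-,h}\) then contains cross terms between distinct branches passing through the same point \(p=\gamma(r_1)=\gamma(r_2)\) with \(\dot\gamma(r_1)\neq\dot\gamma(r_2)\). Such a cross term has phase
\[
 \tfrac{\alpha}{h}\bigl(\Theta^{(1)}-\Theta^{(2)}\bigr),\qquad d\bigl(\Theta^{(1)}-\Theta^{(2)}\bigr)(p)\neq0,
\]
and is localized in an \(\sqrt h\)-ball around \(p\). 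Following \cite{DosSantosFerreiraEtAl2016}, I expect these terms to be \(o(1)\): first approximate \(F\) by smooth functions, which is legitimate because \(\|V_{\pm,h}\|_{L^2}=O(1)\); then integrate by parts once in the direction of \(d(\Theta^{(1)}-\Theta^{(2)})\), or rescale by \(\sqrt h\) and use that an oscillating factor \(e^{iz\cdot\xi/\sqrt h}\) against a Gaussian tends to zero. Checking that the constant normalizations of the different branches are compatible is the delicate bookkeeping point.
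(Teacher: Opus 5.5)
Your architecture matches the paper's: Fermi coordinates with a Riccati equation for the Hessian, a transport equation for the amplitude, normalization through $\det\operatorname{Im}H$, the Gaussian limit after $y=\sqrt h\,z$, nonstationary phase for cross terms at self-intersections, and approximation of $F$ by smooth functions using the $L^2$ bounds. Putting $\sigma$ into the transport equation rather than into the complex frequency $s=h^{-1}+i\sigma$ is only a reparametrization. Indeed, $e^{is\alpha\Theta}a_0=e^{i\alpha\Theta/h}\bigl(e^{-\alpha\sigma\Theta}a_0\bigr)$, and $e^{-\alpha\sigma r}a_0(r)$ solves exactly your equation $\partial_ra=-(\alpha\sigma+\tfrac12\operatorname{tr}H)a$, since $\Delta_{g_0}\Theta=\operatorname{tr}H$ on $\gamma$.

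\textbf{The gap is in the construction of $V_{-,h}$, which fails as written.}
\begin{itemize}
\item Since $\operatorname{Im}\nabla^2\Theta>0$, the factor $e^{-i\alpha\Theta/h}$ has modulus $e^{+\alpha\operatorname{Im}\Theta/h}$. So your $v_{-,h}$ grows like $e^{c|y|^2/h}$ off $\gamma$ and $\|V_{-,h}\|_{L^2}$ is exponentially large.
\item Moreover $e^{i\alpha\Theta/h}e^{-i\alpha\Theta/h}=1$ leaves no Gaussian in the product. This contradicts the factor $e^{-2\alpha\operatorname{Im}\Theta/h}$ you write next.
\item That factor requires the phase $-\alpha\overline{\Theta}$. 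But then the $O(h)$ terms of $\cL_{-,h}$ involve $\Delta_{g_0}\overline{\Theta}$, so $a_-$ must solve the \emph{conjugated} transport equation $\partial_ra_-=-(\alpha\sigma+\tfrac12\operatorname{tr}\overline{H})a_-$.
\item Keeping ``the same transport equation'' leaves an uncancelled term $2\alpha h(\operatorname{tr}\operatorname{Im}H)a_-$. The residual is then only $O(h)$, not the $O(h^{3/2})$ required in \cref{prop:parabolic-quasimodes}.
\item It also puts the factor $e^{-i\int\operatorname{tr}\operatorname{Im}H\,dr}$ into $a_+a_-$. So $(\det\operatorname{Im}H)^{-1/2}e^{-\int\Delta_{g_0}\Theta}$ is constant only in modulus. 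Because $\operatorname{tr}\operatorname{Im}H>0$, the limit would carry a spurious $r$-dependent phase.
\end{itemize}

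\textbf{The fix.} The consistent choice is $a_-=\overline{a_+}$, i.e.\ $V_{-,h}=\zeta_-e^{i\alpha^2\sigma x_1}\overline{v_{+,h}}$, which is exactly the paper's choice. Then
\begin{itemize}
\item $V_{+,h}V_{-,h}=\zeta_+\zeta_-e^{2i\alpha^2\sigma x_1}|v_{+,h}|^2$;
\item $|a_+(r)|^2\propto e^{-2\alpha\sigma r}\exp\bigl(-\int_{r_*}^r\operatorname{tr}\operatorname{Re}H\bigr)$;
\item the identity $\det\operatorname{Im}H(r)=\det\operatorname{Im}H(r_*)\exp\bigl(-2\int_{r_*}^r\operatorname{tr}\operatorname{Re}H\bigr)$ gives the normalization.
\end{itemize}
After this correction your argument goes through.

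\textbf{Two smaller points.} The branch compatibility you flag as delicate is in fact automatic. The normalization is imposed once at a single $r_*$, and the Riccati and transport equations are solved along the whole extended geodesic, so every passage through a self-intersection carries the same constant.

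What does deserve a word is a self-intersection neighbourhood that meets $\partial M_0$, for instance when $\gamma(0)=\gamma(L)$. Integrating by parts over $M_0$ there produces a boundary term. The paper bounds it by $O(h)$ using an $L^2(\partial M_0)$ estimate for the beam. Your $\sqrt h$-rescaling with Riemann--Lebesgue also handles this case, since the rescaled domain is asymptotically a half-space and the integrand remains in $L^1$.
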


\begin{remark}
The constant attenuation in \eqref{eq:quasimode-concentration} is
\(\alpha\sigma\) at the level of one beam and \(2\alpha\sigma\) for the product.
Since \(\alpha>0\), varying \(\sigma\) near zero produces all sufficiently small
attenuations required in \cref{ass:ray-injectivity}.
\end{remark}

Introduce the transversal semiclassical operator
\begin{equation}
 P_{\perp,s}:=-h^2\Delta_{g_0}-\alpha^2h^2s^2.
 \label{eq:transversal-operator}
\end{equation}
The two ingredients needed for the parabolic construction are isolated in the
following lemmas.

\begin{lemma}[Global transversal quasimode]
\label{lem:global-transversal-quasimode}
Let \(\gamma:[0,L]\to M_0\) be a unit-speed non-tangential geodesic and let
\(s=h^{-1}+i\sigma\).  There exists \(v_s\in C^\infty(M_0)\) such that
\begin{equation}
 \|v_s\|_{L^2(M_0)}+\|h\nabla_{g_0}v_s\|_{L^2(M_0)}=O(1),
 \qquad
 \|P_{\perp,s}v_s\|_{L^2(M_0)}=O(h^{3/2}).
 \label{eq:transversal-quasimode-estimates}
\end{equation}
The estimates are uniform for \(\sigma\) in a fixed compact subset of \(\R\).
\end{lemma}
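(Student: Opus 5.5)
We would follow the Gaussian beam construction of \cite{DosSantosFerreiraEtAl2016} (see also \cite{Ralston1982,LiuSaksalaYan2025}). Write \(\lambda:=\alpha h s=\alpha(1+ih\sigma)\), so that \(P_{\perp,s}=-h^2\Delta_{g_0}-\lambda^2\). Since \(\lambda\) is a complex perturbation of size \(O(h)\) of the real frequency \(\alpha\), we seek a WKB ansatz \(v_s=e^{i\lambda\Theta/h}a\) concentrating along \(\gamma\).

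\textbf{Step 1: embedding.} Extend \((M_0,g_0)\) to a slightly larger closed, or at least boundaryless, manifold \((\widehat M_0,g_0)\). Extend \(\gamma\) to a geodesic on \([-\delta,L+\delta]\). Non-tangentiality ensures that \(\gamma\) leaves \(M_0\) transversally at both ends, so the extended segment meets \(M_0\) exactly in \(\gamma([0,L])\). The geodesic may self-intersect, so we cover \(\gamma([-\delta,L+\delta])\) by finitely many Fermi coordinate charts \((r,y)\) around injective subsegments. In each chart \(\gamma\) becomes \(\{y=0\}\).

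\textbf{Step 2: local construction.} In each chart we take
\[
 \Theta(r,y)=r+\tfrac12H(r)y\cdot y,\qquad a(r,y)=h^{-(n-2)/4}a_0(r)\chi(y/\delta'),
\]
where \(H\) is the complex symmetric solution of the matrix Riccati equation \(\dot H+H^2+F=0\), with \(\operatorname{Im}H(0)>0\). This equation is solvable globally, and it keeps \(\operatorname{Im}H>0\). The eikonal equation \(|d\Theta|_{g_0}^2=1\) then holds to order \(O(|y|^3)\).

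The amplitude \(a_0\) solves the transport equation \(2\partial_r a_0+(\Delta_{g_0}\Theta)a_0=0\) on \(\gamma\), namely \(a_0=\exp\bigl(-\tfrac12\int\operatorname{tr}H\bigr)\). The factor \(e^{i\lambda\Theta/h}\) contains \(e^{-\alpha\sigma r}\), which is the attenuation appearing in \cref{prop:quasimode-concentration}. Because \(\lambda\) is complex, the leftover term is
\[
 (\lambda^2)(|d\Theta|^2-1)e^{i\lambda\Theta/h}a ,
\]
which carries only an \(O(h)\) correction to the phase. We therefore keep the imaginary part of \(\lambda\) inside the exponential rather than expanding in it. Gaussian decay \(e^{-c|y|^2/h}\) survives because \(\operatorname{Re}\lambda=\alpha>0\).

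\textbf{Step 3: estimates.} Apply \(P_{\perp,s}\) and use the standard bounds \(\|\,|y|^k e^{-c|y|^2/h}\|_{L^2}\sim h^{k/2}h^{(n-2)/4}\). The remaining error terms are of three kinds:
\begin{itemize}
 \item \(O(|y|^3)\) times \(h^{-1}\cdot h^2\), giving \(O(h^{3/2})\);
 \item \(h^2\Delta a\), giving \(O(h^2)\);
 \item the transport remainder \(O(|y|)\cdot h\), giving \(O(h^{3/2})\).
\end{itemize}
The cutoff \(\chi\) contributes exponentially small terms. The normalization \(h^{-(n-2)/4}\) gives \(\|v_s\|=O(1)\). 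Moreover \(h\nabla v_s=O(1)\), since \(h\,d\) applied to the phase costs a factor \(O(1)\). The factor \(e^{-\alpha\sigma r}\) is bounded uniformly for \(\sigma\) in compact sets, so all estimates are uniform in \(\sigma\).

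\textbf{Step 4: gluing (main obstacle).} The main difficulty is patching the local beams along a possibly self-intersecting \(\gamma\). We would handle it as follows:
\begin{itemize}
 \item Consistency: on overlapping charts along the same parameter interval, propagate \(H\) and \(a_0\) continuously in \(r\), so that the local pieces agree there.
 \item Partition of unity: glue the pieces by a partition of unity in \(r\). Its derivatives fall on regions where the pieces coincide, so they contribute no additional error.
 \item Self-intersections: when different parameter values \(r_1\neq r_2\) map to the same point, the corresponding beams are simply summed. \(P_{\perp,s}\) is linear, so the errors add, and there are only finitely many such pieces.
 \item Restriction: finally, restrict the glued function to \(M_0\).
\end{itemize}
Taken together, these steps yield \eqref{eq:transversal-quasimode-estimates}.
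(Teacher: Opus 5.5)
Your proposal is correct and follows essentially the same route as the paper: a Fermi-coordinate Gaussian beam with the complex frequency kept in the exponent (your $\lambda\Theta/h$ equals the paper's $s\Theta$, since the paper's phase carries the factor $\alpha$), and Riccati and transport equations that make the eikonal and transport errors $O(h^{3/2})$ and the $h^2\Delta_{g_0}a$ term $O(h^2)$. Globalization is likewise by an $r$-partition of unity on compatible charts, with beams on distinct branches simply summed at self-intersections. The only slip is in the eikonal bullet: that error carries the prefactor $\lambda^2=O(1)$, not $h^{-1}\cdot h^2$, and it is $\lambda^2$ combined with $\|h^{-(n-2)/4}|y|^3e^{-c|y|^2/h}\|_{L^2}=O(h^{3/2})$ that gives your (correct) $O(h^{3/2})$ bound.
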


\begin{lemma}[Concentration of the transversal beam]
\label{lem:transversal-concentration}
The quasimodes in \cref{lem:global-transversal-quasimode} can be normalized so
that, for every \(\psi\in C(M_0)\),
\begin{equation}
 \lim_{h\to0}\int_{M_0}|v_s(x')|^2\psi(x')\,dV_{g_0}(x')
 =\int_0^L e^{-2\alpha\sigma r}\psi(\gamma(r))\,dr.
 \label{eq:transversal-concentration}
\end{equation}
\end{lemma}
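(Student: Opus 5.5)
The plan is to use the explicit WKB form of the quasimode from \cref{lem:global-transversal-quasimode}, following \cite{DosSantosFerreiraEtAl2016}. First, embed \((M_0,g_0)\) into a slightly larger closed manifold \((\widehat M_0,g_0)\) and extend \(\gamma\) to \([-\delta,L+\delta]\) so that the extended pieces lie outside \(M_0\). Since \(\gamma\) is non-tangential, we may assume, as in \cite{DosSantosFerreiraEtAl2016}, that:
\begin{itemize}[label={}, leftmargin=0pt]
\item it self-intersects only at finitely many times \(r_1<\dots<r_N\);
\item \([-\delta,L+\delta]\) is covered by finitely many intervals \(I_j\), with Fermi coordinates \((r,y)\), \(y\in\R^{n-2}\), near each \(\gamma(I_j)\);
\item \(\gamma|_{I_j}\) is injective.
\end{itemize}
On each chart the quasimode has the form
\[
 v^{(j)}=h^{-(n-2)/4}e^{i\alpha s\Theta}a,\qquad
 \Theta(r,y)=r+\tfrac12 H(r)y\cdot y+O(|y|^3),
\]
with \(\operatorname{Im}H>0\) solving the Riccati equation. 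The amplitude is \(a=(a_0(r)+O(h))\chi(y/\delta')\), where \(a_0\) solves the first-order transport equation. The global quasimode is \(v_s=\sum_j\chi_j v^{(j)}\) for a partition of unity \(\chi_j\) subordinate to the \(I_j\); the local pieces agree on overlaps by uniqueness for the ODEs.

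The key computation is the modulus. Since \(s=h^{-1}+i\sigma\),
\[
 |e^{i\alpha s\Theta}|^2=e^{-2\alpha\operatorname{Re}(s)\operatorname{Im}\Theta-2\alpha\operatorname{Im}(s)\operatorname{Re}\Theta}
 =e^{-\frac{\alpha}{h}\operatorname{Im}H(r)y\cdot y+O(|y|^3/h)}\,e^{-2\alpha\sigma r+O(|y|^2)}.
\]
The transport equation yields the standard conservation law
\[
 |a_0(r)|^2\det(\operatorname{Im}H(r))^{-1/2}=\text{const}.
\]
Choose the constant so that \(|a_0(r)|^2\int_{\R^{n-2}}e^{-\alpha\operatorname{Im}H(r)z\cdot z}\,dz=1\) for all \(r\). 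This is the normalization in the statement, and it is compatible with \eqref{eq:transversal-quasimode-estimates} because it is merely a fixed constant multiple.

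For \(\psi\in C(M_0)\), first treat a diagonal term on a chart avoiding the self-intersections. Write \(dV_{g_0}=(1+O(|y|))\,dr\,dy\) and substitute \(y=h^{1/2}z\); the factor \(h^{-(n-2)/2}\) cancels the Jacobian. The cubic error becomes \(O(h^{1/2}|z|^3)\), which is harmless on \(|y|\le\delta'\) after shrinking \(\delta'\), so the integrand is dominated by \(Ce^{-c|z|^2}\). Dominated convergence, together with continuity of \(\psi\) and the cutoff \(\psi=\psi|_{M_0}\) (the pieces of \(\gamma\) outside \(M_0\) contribute nothing), gives
\[
 \int_{\R}\int_{\R^{n-2}}|\chi_j|^2\,\psi(\gamma(r))\,e^{-2\alpha\sigma r}|a_0|^2e^{-\alpha\operatorname{Im}Hz\cdot z}\,dz\,dr.
\]
Summing over \(j\) with \(\sum_j\chi_j^2\) adjusted to be \(1\) along \(\gamma\) (or using \(\sum_j\chi_j=1\) and the fact that overlapping local pieces coincide) yields the right-hand side of \eqref{eq:transversal-concentration}.

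The main obstacle is the cross terms \(\int v^{(j)}\overline{v^{(k)}}\psi\) near a self-intersection point \(p=\gamma(r_j)=\gamma(r_k)\), where two distinct branches cross at a nonzero angle. I will handle these by non-stationary phase. The combined phase \(\alpha(\Theta^{(j)}-\overline{\Theta^{(k)}})/h\) has imaginary part \(\gtrsim d(x',\gamma_j)^2+d(x',\gamma_k)^2\), so the integrand is localized to an \(h^{1/2}\)-ball around \(p\), whose volume is \(O(h^{(n-1)/2})\). The prefactor is only \(h^{-(n-2)/2}\), so the cross term is \(O(h^{1/2})\to0\) even before exploiting oscillation. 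Approximating \(\psi\) uniformly by smooth functions is therefore unnecessary. All constants depend continuously on \(\sigma\) in compact sets, giving the required uniformity.
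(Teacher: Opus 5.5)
Your proposal is correct. The normalization (conservation of $|a_0|^2(\det\operatorname{Im}H)^{-1/2}$, fixed by the Gaussian integral) and the single-branch limit ($y=h^{1/2}z$ plus dominated convergence) are the same as in the paper. The genuine difference is how you dispose of the cross terms at a crossing point $p$.

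The paper uses oscillation. The real phase difference $\operatorname{Re}\Theta_\ell-\operatorname{Re}\Theta_{\ell'}$ has nonzero differential at $p$, so the paper integrates by parts once along $X_{\ell\ell'}$. It then controls a boundary integral on $\partial M_0$ via non-tangentiality, and must split $\psi=\psi_1+\psi_2$ with $\psi_1$ smooth, because $\psi$ is only continuous.

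You bound moduli only: $|v^{(j)}\overline{v^{(k)}}|\le Ch^{-(n-2)/2}e^{-c(|y_j|^2+|y_k|^2)/h}$, with $y_j,y_k$ the Fermi transverse coordinates of the two branches. Combined with $|y_j|^2+|y_k|^2\ge c\,d(x',p)^2$ near $p$, integrating over the $(n-1)$-dimensional $M_0$ gives $O(h^{1/2})\|\psi\|_{L^\infty}$. This is more elementary, needs no smoothing of $\psi$, and does not care whether $p$ lies on $\partial M_0$ (as happens if $\gamma(0)=\gamma(L)$).

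The price is that you need distinct tangent \emph{lines} at $p$. Your lower bound is the positive definiteness of the sum of the orthogonal projections onto $\dot\gamma(r_j)^\perp$ and $\dot\gamma(r_k)^\perp$, whereas the paper only needs $\dot\gamma(r_j)\neq\dot\gamma(r_k)$. You should say why $\dot\gamma(r_j)=-\dot\gamma(r_k)$ is impossible: it would force $\gamma(r_j+t)=\gamma(r_k-t)$, so the velocity would vanish at the parameter midpoint.

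Finally, discard the option of adjusting $\sum_j\chi_j^2$ to $1$. Consecutive pieces coincide on parameter overlaps, so $\chi_j\chi_{j+1}|v^{(j)}|^2$ is as large as the diagonal terms. Moreover, a nonconstant $\sum_j\chi_j$ would add an $O(h)$ commutator error to the residual. Your parenthetical alternative ($\sum_j\chi_j=1$ with coinciding pieces) is the right bookkeeping. Localizing $\psi$ rather than $v_s$, as the paper does, is the cleanest way to carry it out.
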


\begin{proof}[Proof of \cref{lem:global-transversal-quasimode}]
The construction is similar to the Gaussian beam constructions in
\cite{DosSantosFerreiraEtAl2016,LiuSaksalaYan2025}.  We include the details
needed for the residual estimate and for globalization across
self-intersections.

In this proof, unlabelled \(L^2\)-norms are taken with respect to
\(dV_{g_0}\) over the relevant local or tubular neighborhood; global norms
are indicated by \(L^2(M_0)\).

\noindent\emph{Step 1: Local Gaussian beam.}
Write \(m=\dim M_0=n-1\) and \(d=m-1=n-2\).  Extend \((M_0,g_0)\) to a
closed manifold \((\widehat M_0,g_0)\), and extend \(\gamma\) as a unit-speed
geodesic.  Since \(\gamma\) is non-tangential, there is \(\epsilon>0\) such that
\begin{equation}
 \gamma(r)\in\widehat M_0\setminus M_0,\qquad
 r\in[-2\epsilon,0)\cup(L,L+2\epsilon].
 \label{eq:extended-geodesic-outside}
\end{equation}
We may arrange that the segment
\(\gamma:[-2\epsilon,L+2\epsilon]\to\widehat M_0\) is not a loop.

Following the Fermi-coordinate construction along a non-tangential geodesic
in \cite[Section~7]{KenigSalo2013}, fix
\(r_*\in[-\epsilon,L+\epsilon]\) and choose, in a neighborhood of
\(\gamma(r_*)\), Fermi coordinates
\[
 (r,y)\in I\times B(0,\delta),\qquad y\in\R^d,
\]
such that
\begin{equation}
 \gamma(r)=(r,0),\qquad
 (g_0)_{jk}(r,0)=\delta_{jk},\qquad
 \partial_\ell(g_0)_{jk}(r,0)=0.
 \label{eq:fermi-properties}
\end{equation}
The quadratic part of the inverse metric coefficient is
\begin{equation}
 g_0^{rr}(r,y)=1-\mathsf F(r)y\cdot y+O(|y|^3),
 \label{eq:inverse-metric-expansion}
\end{equation}
where \(\mathsf F(r)\) is a smooth real symmetric \(d\times d\) matrix.  Let
\(H(r)\) solve
\begin{equation}
 \dot H+H^2=\mathsf F,\qquad H(r_*)=H_*,
 \label{eq:riccati}
\end{equation}
where \(H_*\) is complex symmetric with positive-definite imaginary part.
The standard matrix Riccati result
\cite[Lemma~2.56]{KatchalovKurylevLassas2001} gives a unique smooth complex
symmetric solution along the full extended segment such that
\(\operatorname{Im}H(r)>0\).
Define
\begin{equation}
 \Theta(r,y):=\alpha\left(r+\frac12H(r)y\cdot y\right).
 \label{eq:gaussian-phase}
\end{equation}
Then
\begin{align}
 \langle d\Theta,d\Theta\rangle_{g_0}-\alpha^2&=O(|y|^3),
 \label{eq:eikonal-error}\\
 \Theta(r,0)=\alpha r,\qquad
 d\Theta(r,0)&=\alpha\dot\gamma(r)^\flat.
 \label{eq:phase-on-ray}
\end{align}
Here \(\dot\gamma(r)^\flat:=g_0(\dot\gamma(r),\,\cdot\,)\) denotes the metric
dual of \(\dot\gamma(r)\), and \(d\Theta(r,0)\) is the differential of
\(\Theta\) at the point \(\gamma(r)=(r,0)\).  Since the Fermi coordinates
are adapted to the unit-speed geodesic \(\gamma\), the second identity in
\eqref{eq:phase-on-ray} is equivalently
\[
 \partial_r\Theta(r,0)=\alpha,\qquad
 \partial_{y^j}\Theta(r,0)=0,\quad j=1,\ldots,d.
\]
After reducing \(\delta\), there is \(c_0>0\) such that
\begin{equation}
 \operatorname{Im}\Theta(r,y)\geq c_0|y|^2,\qquad |y|<\delta.
 \label{eq:phase-positivity}
\end{equation}

Choose \(\chi\in C_0^\infty(\R^d)\) equal to one for \(|y|\leq1/4\) and zero
for \(|y|\geq1/2\), and set
\begin{equation}
 v_s^{\mathrm{loc}}(r,y)
 :=e^{is\Theta(r,y)}a_h(r,y),\qquad
 a_h(r,y):=h^{-d/4}a_0(r)\chi(y/\delta).
 \label{eq:local-beam}
\end{equation}
The leading amplitude is determined by
\begin{equation}
 \dot a_0(r)+\frac12\operatorname{tr}H(r)a_0(r)=0.
 \label{eq:transport}
\end{equation}
Near \(y=0\),
\begin{equation}
 2\langle d\Theta,da_h\rangle_{g_0}
 +(\Delta_{g_0}\Theta)a_h=h^{-d/4}O(|y|).
 \label{eq:transport-error}
\end{equation}
Terms in which the cutoff is differentiated are \(O(h^N)\) in \(L^2\) for
every \(N>0\), by \eqref{eq:phase-positivity}.  By the change of variables
\(y=h^{1/2}z\) in the Gaussian integral, we have
\begin{equation}
 \bigl\|h^{-d/4}|y|^ke^{-c|y|^2/h}\bigr\|_{L^2(\R^d)}
 =O(h^{k/2}),\qquad k\geq0,
 \label{eq:gaussian-scaling}
\end{equation}
and therefore
\begin{equation}
 \|v_s^{\mathrm{loc}}\|_{L^2}
 +\|h\nabla_{g_0}v_s^{\mathrm{loc}}\|_{L^2}=O(1).
 \label{eq:local-beam-bound}
\end{equation}
A direct calculation gives
\begin{align}
 P_{\perp,s}(e^{is\Theta}a_h)
 =e^{is\Theta}\bigl[
 &h^2s^2(\langle d\Theta,d\Theta\rangle_{g_0}-\alpha^2)a_h\notag\\
 &-ih^2s\{2\langle d\Theta,da_h\rangle_{g_0}
       +(\Delta_{g_0}\Theta)a_h\}
 -h^2\Delta_{g_0}a_h\bigr].
 \label{eq:local-residual-expansion}
\end{align}
We now estimate the three terms in \eqref{eq:local-residual-expansion}.
Since \(s=h^{-1}+i\sigma\), uniformly for \(\sigma\) in a fixed compact
subset of \(\R\),
\[
 |h^2s^2|\leq C,\qquad |h^2s|\leq Ch.
\]
Moreover,
\[
 |e^{is\Theta(r,y)}|
 =\exp\left(-\frac{\operatorname{Im}\Theta(r,y)}{h}
            -\sigma\operatorname{Re}\Theta(r,y)\right)
 \leq Ce^{-c|y|^2/h},
\]
where we used \eqref{eq:phase-positivity} and the boundedness of
\(\operatorname{Re}\Theta\) on the support of the local beam.  It follows from
\cref{eq:eikonal-error,eq:gaussian-scaling} that the eikonal error satisfies
\begin{align*}
 &\bigl\|e^{is\Theta}h^2s^2
 \bigl(\langle d\Theta,d\Theta\rangle_{g_0}-\alpha^2\bigr)a_h
 \bigr\|_{L^2}\\
 &\qquad\leq
 C\bigl\|h^{-d/4}|y|^3e^{-c|y|^2/h}\bigr\|_{L^2}
 =O(h^{3/2}).
\end{align*}
Similarly, \cref{eq:transport-error,eq:gaussian-scaling} give
\begin{align*}
 &\bigl\|e^{is\Theta}h^2s
 \{2\langle d\Theta,da_h\rangle_{g_0}
       +(\Delta_{g_0}\Theta)a_h\}\bigr\|_{L^2}\\
 &\qquad\leq
 Ch\bigl\|h^{-d/4}|y|e^{-c|y|^2/h}\bigr\|_{L^2}
 =O(h^{3/2}).
\end{align*}
The \(r\)-interval is fixed, so its integration only contributes a uniform
constant to these estimates.  Finally, since the cutoff is independent of
\(h\), the smoothness of \(a_0\) and the metric coefficients gives
\[
 h^2\bigl\|e^{is\Theta}\Delta_{g_0}a_h\bigr\|_{L^2}
 \leq Ch^2
 \bigl\|h^{-d/4}e^{-c|y|^2/h}\bigr\|_{L^2}
 =O(h^2).
\]
As noted above, every term in which a derivative falls on the cutoff is
\(O(h^N)\) for all \(N>0\).  Combining the three bounds yields
\begin{equation}
 \|P_{\perp,s}v_s^{\mathrm{loc}}\|_{L^2}=O(h^{3/2}).
 \label{eq:local-residual}
\end{equation}

\smallskip
\noindent\emph{Step 2: Globalization along the geodesic.}
After decreasing \(\epsilon\) if necessary, the endpoints \(-\epsilon\) and
\(L+\epsilon\) are not self-intersection times.  Since \(\widehat M_0\) is
compact and the extended geodesic segment is not a loop,
\cite[Lemma~7.2]{KenigSalo2013} shows that it has only finitely many
self-intersection times in
\([-\epsilon,L+\epsilon]\).  If such times occur, write them as
\[
 -\epsilon=r_0<r_1<\cdots<r_N<r_{N+1}=L+\epsilon.
\]
By \cite[Lemma~7.4]{KenigSalo2013}, there is a Fermi-coordinate cover
\[
 \{(\Omega_j,\Psi_j)\}_{j=0}^{N+1}
 \quad\text{of}\quad \gamma([-\epsilon,L+\epsilon])
\]
having the following properties:
\begin{enumerate}[label=\textup{(\roman*)}]
\item \(\Psi_j(\Omega_j)=I_j\times B(0,\delta)\), where each \(I_j\) is an
open interval and \(B(0,\delta)\subset\R^d\); the same \(\delta>0\) may be
used for every \(j\) and may be chosen arbitrarily small;
\item \(\Psi_j(\gamma(r))=(r,0)\) for \(r\in I_j\);
\item each \(r_j\), \(1\leq j\leq N\), belongs only to \(I_j\), and
\(I_j\cap I_k=\varnothing\) whenever \(|j-k|>1\);
\item whenever \(I_j\cap I_k\neq\varnothing\), the two coordinate maps
agree on the corresponding overlap,
\[
 \Psi_j=\Psi_k
 \quad\text{on}\quad
 \Psi_j^{-1}\bigl((I_j\cap I_k)\times B(0,\delta)\bigr).
\]
\end{enumerate}
More explicitly, for some sufficiently small \(\delta_1>0\), the intervals
may be taken as
\[
 \begin{split}
 I_0&=(-2\epsilon,r_1-\delta_1),\\
 I_j&=(r_j-2\delta_1,r_{j+1}-\delta_1),
       \qquad 1\leq j\leq N,\\
 I_{N+1}&=(r_{N+1}-2\delta_1,L+2\epsilon).
 \end{split}
\]
In this case set \(J=N+1\).  If the extended segment has no
self-intersections, a single tubular Fermi chart satisfying
\textup{(i)}--\textup{(ii)} is sufficient; we then take
\(I_0=(-2\epsilon,L+2\epsilon)\) and set \(J=0\).

Starting in \(\Omega_0\), construct \(v_s^{(0)}\) as in Step~1.  For
\(j<J\), continue the phase and amplitude into \(\Omega_{j+1}\), using their
values at a point of \(I_j\cap I_{j+1}\) as Cauchy data.

Uniqueness for
the Riccati and transport equations, together with property
\textup{(iv)}, implies that \(v_s^{(j)}=v_s^{(j+1)}\) on the corresponding
overlap.  Indeed, the phases, leading amplitudes, and transverse cutoffs are
written in the same Fermi coordinates there and have the same initial data.
Continuing in this way gives compatible local beams along each parameterized
passage of the geodesic.

Let \(\{\vartheta_j\}_{j=0}^{J}\) be a smooth partition of unity on
\([-\epsilon,L+\epsilon]\) subordinate to \(\{I_j\}_{j=0}^{J}\).  We use
the same notation for its extension to \(\Omega_j\) through the projection
\((r,y)\mapsto r\).  Extending each summand by zero outside \(\Omega_j\), set
\begin{equation}
 v_s=\sum_{j=0}^{J}\vartheta_jv_s^{(j)}.
 \label{eq:global-beam-embedded}
\end{equation}
Because the interval cover has multiplicity at most two, only two consecutive
cutoffs can be active on a parameter overlap.  The equality of the two local
beams and the identity \(\vartheta_j+\vartheta_{j+1}=1\) show that the sum in
\eqref{eq:global-beam-embedded} reduces there to the single beam associated
with that passage.  Thus the partition of unity introduces no new error term.

It remains only to describe what happens where distinct passages meet.  Let
\(z_1,\ldots,z_R\) be the distinct self-intersection points of \(\gamma\), and
write
\[
 \gamma^{-1}(\{z_k\})\cap[-\epsilon,L+\epsilon]
 =\{r_{k,1},\ldots,r_{k,N_k}\}.
\]
Thus \(N_k\) is the number of passages of \(\gamma\) through \(z_k\).  Choose
pairwise disjoint neighborhoods \(V_k\) of the points \(z_k\).  After
shrinking them, \(\gamma\) is embedded on a small parameter interval about
each \(r_{k,\ell}\).  We denote the image of this interval in \(V_k\) by
\(\Gamma_{k,\ell}\), \(1\leq\ell\leq N_k\); it is the branch segment
corresponding to the \(\ell\)-th passage through \(z_k\).
Let \(v_s^{(k,\ell)}\) denote the local beam in
\eqref{eq:global-beam-embedded} associated with this passage.  Local beams
belonging to consecutive charts along the same branch have already been
identified by the preceding argument, whereas beams belonging to distinct
branches are kept as separate summands.  The compact part of the remaining
trace can be covered by finitely many neighborhoods
\(W_1,\ldots,W_S\), where \(W_\mu\subset\Omega_{j(\mu)}\) for some
\(j(\mu)\).  Reducing \(\delta\) once more if necessary, we may arrange that
\[
 \operatorname{supp}(v_s)\cap M_0
 \subset \bigcup_{k=1}^{R}V_k\cup\bigcup_{\mu=1}^{S}W_\mu.
\]
On this cover,
\begin{equation}
 v_s|_{V_k}=\sum_{\ell=1}^{N_k}v_s^{(k,\ell)},
 \qquad
 v_s|_{W_\mu}=v_s^{(j(\mu))}.
 \label{eq:beam-at-self-intersection}
\end{equation}
On each \(W_\mu\), the estimates follow directly from
\cref{eq:local-beam-bound,eq:local-residual}.  On each \(V_k\), the triangle
inequality and \eqref{eq:beam-at-self-intersection} give
\begin{align*}
 \|v_s\|_{L^2(V_k)}+\|h\nabla_{g_0}v_s\|_{L^2(V_k)}
 &\leq \sum_{\ell=1}^{N_k}
 \bigl(\|v_s^{(k,\ell)}\|_{L^2(V_k)}
 +\|h\nabla_{g_0}v_s^{(k,\ell)}\|_{L^2(V_k)}\bigr)\leq C,\\
 \|P_{\perp,s}v_s\|_{L^2(V_k)}
 &\leq\sum_{\ell=1}^{N_k}
 \|P_{\perp,s}v_s^{(k,\ell)}\|_{L^2(V_k)}\leq Ch^{3/2}.
\end{align*}
Since the cover and all branch families are finite, these local estimates give
\begin{align}
 \|v_s\|_{L^2(M_0)}+\|h\nabla_{g_0}v_s\|_{L^2(M_0)}&\leq C,
 \label{eq:global-beam-L2-bound}\\
 \|P_{\perp,s}v_s\|_{L^2(M_0)}&\leq Ch^{3/2}.
 \label{eq:global-beam-residual-bound}
\end{align}
No orthogonality between distinct branches is needed for these two estimates.
This proves \cref{lem:global-transversal-quasimode}.
\end{proof}

\begin{proof}[Proof of \cref{lem:transversal-concentration}]
We retain the phase and amplitude notation from the preceding proof.  Unless
a domain is displayed, the \(L^2\)-norms below are taken over the local
neighborhood currently under consideration.

Write \(H=A+iB\), where \(A=\operatorname{Re}H\) and
\(B=\operatorname{Im}H\), and a dot denotes differentiation with respect to
\(r\).  We first normalize the beam on a single branch.
Taking imaginary parts in \eqref{eq:riccati} gives
\[
 \dot B+AB+BA=0,
\]
and Jacobi's formula, or equivalently
\cite[Lemma~2.58]{KatchalovKurylevLassas2001}, gives
\begin{equation}
 \det B(r)=\det B(r_*)
 \exp\left(-2\int_{r_*}^r\operatorname{tr}A(\tau)\,d\tau\right).
 \label{eq:detB}
\end{equation}
Equation \eqref{eq:transport} gives
\begin{equation}
 |a_0(r)|^2=|a_0(r_*)|^2
 \exp\left(-\int_{r_*}^r\operatorname{tr}A(\tau)\,d\tau\right).
 \label{eq:amplitude-modulus}
\end{equation}
Choose
\begin{equation}
 a_0(r_*)=\left(\frac{\alpha}{\pi}\right)^{d/4}
          \bigl(\det B(r_*)\bigr)^{1/4}.
 \label{eq:amplitude-normalization}
\end{equation}
Then
\begin{equation}
 \left(\frac{\pi}{\alpha}\right)^{d/2}
 \frac{|a_0(r)|^2}{\sqrt{\det B(r)}}=1.
 \label{eq:normalization-identity}
\end{equation}

Recall from \eqref{eq:beam-at-self-intersection} and the support inclusion
preceding it that \(\operatorname{supp}(v_s)\cap M_0\) is covered by the
finitely many sets \(V_k\) and \(W_\mu\).  Choose a smooth partition of unity
\[
 \{\rho_k^V\}_{k=1}^R\cup\{\rho_\mu^W\}_{\mu=1}^S
\]
subordinate to this cover, with their sum equal to one on a neighborhood of
\(\operatorname{supp}(v_s)\cap M_0\).  By linearity, it is enough to prove
the asserted limit with \(\psi\) replaced by each of
\(\rho_k^V\psi\) and \(\rho_\mu^W\psi\).  Thus, below, \(\psi\) denotes a
localized test function compactly supported relative to one set
\(W_\mu\cap M_0\) or one set \(V_k\cap M_0\).  Such a function may still be
nonzero on the physical boundary \(\partial M_0\).

First suppose that \(\psi\) is supported in \(W_\mu\cap M_0\).  By
\eqref{eq:beam-at-self-intersection},
\(v_s=v_s^{(j(\mu))}\) on the support of \(\psi\).  In the corresponding
Fermi chart, write \(dV_{g_0}=\mathcal J(r,y)\,dr\,dy\), where
\(\mathcal J(r,0)=1\).  By
\cref{eq:gaussian-phase,eq:local-beam},
\begin{align}
 |v_s^{(j(\mu))}(r,y)|^2
 ={}&h^{-d/2}|a_0(r)|^2e^{-2\alpha\sigma r}
 \exp\left(-\frac{\alpha}{h}B(r)y\cdot y\right)\notag\\
 &\times\exp\bigl(-\alpha\sigma A(r)y\cdot y\bigr)
 \chi(y/\delta)^2.
 \label{eq:beam-modulus}
\end{align}
After the change of variables \(y=h^{1/2}z\), dominated convergence and
\eqref{eq:normalization-identity} show that
\begin{align}
 \lim_{h\to0}\int_{M_0}|v_s(x')|^2\psi(x')\,dV_{g_0}(x')
 &=\int_{I_{j(\mu)}\cap[0,L]}
 e^{-2\alpha\sigma r}\psi(\gamma(r))\,dr\notag\\
 &=\int_0^L e^{-2\alpha\sigma r}\psi(\gamma(r))\,dr.
 \label{eq:local-concentration}
\end{align}
The last equality uses the fact that \(W_\mu\) meets only the branch
corresponding to \(I_{j(\mu)}\).

Now suppose that \(\psi\) is supported in \(V_k\cap M_0\).  In this case,
\eqref{eq:beam-at-self-intersection} gives, on the support of \(\psi\),
\[
 |v_s|^2
 =\sum_{\ell=1}^{N_k}|v_s^{(k,\ell)}|^2
  +\sum_{\ell\neq\ell'}
    v_s^{(k,\ell)}\overline{v_s^{(k,\ell')}}.
\]
The calculation leading to \eqref{eq:local-concentration}, applied to each
branch separately, gives
\[
 \lim_{h\to0}\sum_{\ell=1}^{N_k}
 \int_{M_0}|v_s^{(k,\ell)}|^2\psi\,dV_{g_0}
 =\int_0^L e^{-2\alpha\sigma r}\psi(\gamma(r))\,dr.
\]
Here the right-hand side counts all passages of \(\gamma\) through \(V_k\).
It remains to show that the cross terms tend to zero.

Fix \(\ell\neq\ell'\).  Suppressing \(k\) in the branchwise phase and
amplitude notation, for each \(\jmath\in\{\ell,\ell'\}\) write
\[
 r_\jmath:=r_{k,\jmath},\qquad
 v_s^{(k,\jmath)}=e^{is\Theta_\jmath}a_{\jmath,h}.
\]
Set
\begin{equation}
 \phi_{\ell\ell'}
 :=\operatorname{Re}\Theta_\ell-\operatorname{Re}\Theta_{\ell'}.
 \label{eq:cross-phase}
\end{equation}
The two branches have distinct tangent vectors at \(z_k\); otherwise uniqueness
for the geodesic equation would identify them locally.  Hence
\eqref{eq:phase-on-ray} gives
\begin{equation}
 d\phi_{\ell\ell'}(z_k)
 =\alpha\bigl(\dot\gamma(r_\ell)^\flat
 -\dot\gamma(r_{\ell'})^\flat\bigr)\neq0.
 \label{eq:cross-phase-gradient}
\end{equation}
Since there are only finitely many pairs of branches, the neighborhoods
\(V_k\) in the preceding proof may be chosen so small that, for some \(c>0\),
\[
 |d\phi_{\ell\ell'}|_{g_0}\geq c
 \quad\text{in }V_k
\]
for every \(\ell\neq\ell'\).  For
\(\jmath\in\{\ell,\ell'\}\), write
\[
 v_s^{(k,\jmath)}
 =e^{i\operatorname{Re}\Theta_\jmath/h}w_{\jmath,h},\qquad
 w_{\jmath,h}
 :=e^{-\operatorname{Im}\Theta_\jmath/h}
   e^{-\sigma\Theta_\jmath}a_{\jmath,h}.
\]
The cross term has the form
\begin{equation}
 \mathcal C_{\ell\ell'}(h;\psi)
 :=\int_{V_k\cap M_0}e^{i\phi_{\ell\ell'}/h}
 w_{\ell,h}\overline{w_{\ell',h}}\psi\,dV_{g_0}.
 \label{eq:cross-term-integral}
\end{equation}
Because \(\psi\) was obtained from the preceding partition of unity, it
vanishes near the artificial boundary
\(\partial V_k\cap M_0^{\mathrm{int}}\).

We first record the estimates needed below.  The local beam bound gives
\(\|w_{\ell,h}\|_{L^2(V_k\cap M_0)}=O(1)\).  In Fermi coordinates for the
\(\ell\)-th branch, \(|d\operatorname{Im}\Theta_\ell|\leq C|y|\), and hence
\begin{equation}
 \bigl\||d\operatorname{Im}\Theta_\ell|w_{\ell,h}\bigr\|_{L^2(V_k\cap M_0)}^2
 \leq Ch^{-d/2}\int_{\R^d}|y|^2e^{-c|y|^2/h}\,dy
 =O(h).
 \label{eq:weighted-beam-derivative}
\end{equation}
If \(V_k\) meets \(\partial M_0\), non-tangentiality allows the boundary to be
written in these coordinates as \(r=r(y)\).  The same Gaussian scaling gives
\begin{equation}
 \|w_{\ell,h}\|_{L^2(V_k\cap\partial M_0)}^2
 \leq Ch^{-d/2}\int_{\R^d}e^{-c|y|^2/h}\,dy
 =O(1).
 \label{eq:beam-boundary-bound}
\end{equation}
The corresponding estimates hold for the \(\ell'\)-th branch.  All constants
are uniform for \(\sigma\) in a fixed compact subset of \(\R\).

Let \(\varepsilon>0\).  Choose a smooth function \(\psi_1\), supported in
\(V_k\) and smooth up to \(\partial M_0\), such that
\(\psi=\psi_1+\psi_2\) and
\(\|\psi_2\|_{L^\infty(V_k\cap M_0)}\leq\varepsilon\).  By Cauchy--Schwarz,
\begin{equation}
 |\mathcal C_{\ell\ell'}(h;\psi_2)|
 \leq
 \|w_{\ell,h}\|_{L^2}
 \|w_{\ell',h}\|_{L^2}
 \|\psi_2\|_{L^\infty}
 \leq C\varepsilon.
 \label{eq:cross-term-continuous-part}
\end{equation}
For the smooth part, introduce the nonstationary-phase vector field
\begin{equation}
 X_{\ell\ell'}
 :=\frac{\nabla_{g_0}\phi_{\ell\ell'}}
         {|d\phi_{\ell\ell'}|_{g_0}^2},
 \qquad
 X_{\ell\ell'}e^{i\phi_{\ell\ell'}/h}
 =\frac{i}{h}e^{i\phi_{\ell\ell'}/h}.
 \label{eq:nonstationary-operator}
\end{equation}
One integration by parts gives
\begin{align}
 \mathcal C_{\ell\ell'}(h;\psi_1)
 ={}&\frac{h}{i}\int_{V_k\cap\partial M_0}
 e^{i\phi_{\ell\ell'}/h}
 \langle X_{\ell\ell'},\nu_0\rangle_{g_0}
 w_{\ell,h}\overline{w_{\ell',h}}\psi_1\,dS_{g_0}
 \notag\\
 &-\frac{h}{i}\int_{V_k\cap M_0}
 e^{i\phi_{\ell\ell'}/h}
 \operatorname{div}_{g_0}\bigl(
 X_{\ell\ell'}w_{\ell,h}\overline{w_{\ell',h}}\psi_1
 \bigr)\,dV_{g_0}.
 \label{eq:cross-term-integration-by-parts}
\end{align}
Here \(\nu_0\) is the outward unit normal to \(\partial M_0\).  The boundary
integral is absent if \(V_k\cap\partial M_0=\varnothing\), and otherwise it is
\(O(h)\) by \eqref{eq:beam-boundary-bound}.  In the interior integral, the
largest terms occur when a derivative falls on
\(e^{-\operatorname{Im}\Theta_\ell/h}\) or
\(e^{-\operatorname{Im}\Theta_{\ell'}/h}\).  Using
\eqref{eq:weighted-beam-derivative} and Cauchy--Schwarz, these terms are
\(O(h^{1/2})\).  Derivatives of the smooth coefficients, \(\psi_1\),
\(e^{-\sigma\Theta_\ell}\), and the leading amplitudes contribute \(O(h)\);
cutoff derivatives are exponentially small.  Therefore
\[
 |\mathcal C_{\ell\ell'}(h;\psi_1)|=O(h^{1/2}).
\]
Taking \(h\to0\) and then \(\varepsilon\to0\) in
\eqref{eq:cross-term-continuous-part} yields
\begin{equation}
 \mathcal C_{\ell\ell'}(h;\psi)=o(1).
 \label{eq:cross-term-decay}
\end{equation}
Thus every cross term vanishes, proving the desired limit for \(\psi\)
supported in \(V_k\).  Summing the local limits by the partition of unity
proves \eqref{eq:transversal-concentration}.  This proves
\cref{lem:transversal-concentration}.
\end{proof}

\begin{proof}[Proof of \cref{prop:parabolic-quasimodes}]
Take the transversal beam from \cref{lem:global-transversal-quasimode}.  The
transversal construction is independent of \(t\).  We may therefore let
\(\zeta_\pm\in C^\infty([0,T])\) be arbitrary fixed smooth functions and
attach them as time amplitudes only at this stage.  Set
\begin{equation}
 V_{+,h}(t,x)=\zeta_+(t)e^{i\eta x_1}v_s(x'),\qquad
 V_{-,h}(t,x)=\zeta_-(t)e^{i\eta x_1}\overline{v_s(x')}.
 \label{eq:parabolic-quasimode-ansatz}
\end{equation}
The subscript ``\(-\)'' refers to the conjugated adjoint operator
\(\cL_{-,h}\), not to the sign of the \(x_1\)-oscillation.  We keep the same
factor \(e^{i\eta x_1}\) and conjugate the transversal beam so that the product
of the two parabolic quasimodes contains \(|v_s|^2\).

For a smooth \(v=v(x')\), the product structure and
\cref{eq:L-plus,eq:L-minus} give
\begin{align}
 \cL_{+,h}(\zeta_+e^{i\eta x_1}v)
 &=e^{i\eta x_1}\zeta_+
   (-h^2\Delta_{g_0}-\alpha^2+h^2\eta^2-2ih\eta)v
   +h^2e^{i\eta x_1}(\zeta_+'+q\zeta_+)v,
 \label{eq:Lplus-on-ansatz}\\
 \cL_{-,h}(\zeta_-e^{i\eta x_1}\overline v)
 &=e^{i\eta x_1}\zeta_-
   (-h^2\Delta_{g_0}-\alpha^2+h^2\eta^2+2ih\eta)\overline v
   +h^2e^{i\eta x_1}(-\zeta_-'+\overline q\,\zeta_-)\overline v.
 \label{eq:Lminus-on-ansatz}
\end{align}
By \eqref{eq:s-eta},
\begin{align}
 \alpha^2(h^2s^2-1)+h^2\eta^2-2ih\eta
 &=-\alpha^2\beta^2\sigma^2h^2,
 \label{eq:algebra-plus}\\
 \alpha^2(h^2\overline s^{\,2}-1)+h^2\eta^2+2ih\eta
 &=-\alpha^2\beta^2\sigma^2h^2.
 \label{eq:algebra-minus}
\end{align}
Consequently,
\begin{align}
 \cL_{+,h}(\zeta_+e^{i\eta x_1}v)
 &=\zeta_+e^{i\eta x_1}P_{\perp,s}v
 +h^2e^{i\eta x_1}
   [\zeta_+'+(q-\alpha^2\beta^2\sigma^2)\zeta_+]v,
 \label{eq:reduced-plus}\\
 \cL_{-,h}(\zeta_-e^{i\eta x_1}\overline v)
 &=\zeta_-e^{i\eta x_1}\overline{P_{\perp,s}v}
 +h^2e^{i\eta x_1}
   [-\zeta_-'+(\overline q-\alpha^2\beta^2\sigma^2)\zeta_-]\overline v.
 \label{eq:reduced-minus}
\end{align}
Since \(M\Subset\R\times M_0^{\mathrm{int}}\), its \(x_1\)-projection is
bounded.  Taking \(v=v_s\) in the last two identities and using
\eqref{eq:transversal-quasimode-estimates} gives
\begin{align}
 \|V_{\pm,h}\|_{L^2(Q)}&\leq C,\notag\\
 \|\cL_{\pm,h}V_{\pm,h}\|_{L^2(Q)}
 &\leq C\bigl(h^{3/2}+h^2\bigr)=O(h^{3/2}),
 \label{eq:parabolic-residual-from-transversal}
\end{align}
which proves \eqref{eq:quasimode-estimates}.  Since \(\zeta_\pm\) were
arbitrary fixed smooth functions, the same construction is available with any
prescribed time amplitudes needed below.
\end{proof}

\begin{proof}[Proof of \cref{prop:quasimode-concentration}]
Use the normalized beam from \cref{lem:transversal-concentration} in the
construction of \cref{prop:parabolic-quasimodes}.  The calculation above
already gives \eqref{eq:quasimode-estimates}.  Moreover,
\begin{equation}
 V_{+,h}V_{-,h}
 =\zeta_+(t)\zeta_-(t)e^{2i\alpha^2\sigma x_1}|v_s(x')|^2.
 \label{eq:quasimode-product}
\end{equation}
For each fixed \((t,x_1)\), apply \eqref{eq:transversal-concentration} with
\(\psi(x')=F(t,x_1,x')\).  The bound
\begin{equation}
 \left|\int_{M_0}F(t,x_1,x')|v_s(x')|^2\,dV_{g_0}(x')\right|
 \leq C\|F(t,x_1,\cdot)\|_{L^\infty(M_0)}
 \label{eq:concentration-dominated-bound}
\end{equation}
follows from \eqref{eq:global-beam-L2-bound}.  Since \(F\) is compactly
supported in \(x_1\), dominated convergence in \((t,x_1)\) now gives
\eqref{eq:quasimode-concentration}.
\end{proof}

\section{Construction of geometric optics solutions}
\label{sec:go-solutions}

We first derive solvability statements that permit prescribed Dirichlet values on
one strict side of the boundary.  We then use them to correct the quasimodes from
\cref{sec:gaussian-beams}, obtaining exact solutions whose lateral traces are
supported in \(U\) and \(V\), respectively.

\subsection{Solvability with prescribed partial boundary values}

For \(\varepsilon>0\), set
\begin{equation}
 \Sigma_{+,\varepsilon}
 :=\{(t,x)\in\Sigma:\partial_\nu\phi(x)\geq\varepsilon\},
 \qquad
 \Sigma_{-,\varepsilon}
 :=\{(t,x)\in\Sigma:\partial_\nu\phi(x)\leq-\varepsilon\}.
 \label{eq:sigma-epsilon}
\end{equation}
Whenever an expression such as \(\{\partial_\nu\phi>0\}\) or
\(\{\partial_\nu\phi<0\}\) is used below without its ambient set being
displayed, it denotes the corresponding subset of \(\Sigma\); for example,
\[
 \{\partial_\nu\phi>0\}
 :=\{(t,x)\in\Sigma:\partial_\nu\phi(x)>0\}.
\]

\begin{lemma}[Normal-trace density]\label{lem:normal-trace-density}
Let
\[
 \cV=\{v\in C^\infty(\overline Q):
 v|_\Sigma=0,\ v(0,\cdot)=v(T,\cdot)=0\}.
\]
Then \(\{\partial_\nu v|_\Sigma:v\in\cV\}\) contains
\(C_0^\infty(\Sigma)\), and hence is dense both in
\(H^{1/2,1/4}(\Sigma)\) and in \(L^2(\Sigma)\).
\end{lemma}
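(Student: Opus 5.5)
Given \(\psi\in C_0^\infty(\Sigma)\), we will build \(v\in\cV\) with \(\partial_\nu v|_\Sigma=\psi\) directly, using a collar of the boundary. Fix a collar neighborhood of \(\partial M\) given by \((y,\rho)\in\partial M\times[0,\rho_0)\mapsto\exp_y(\rho\,(-\nu(y)))\), so that \(\rho\) is the distance to \(\partial M\) and \(\partial_\nu=-\partial_\rho\) on \(\partial M\). Choose a cutoff \(\chi\in C_0^\infty([0,\rho_0))\) with \(\chi=1\) near \(0\). Since \(\psi\) has compact support in \(\Sigma=(0,T)\times\partial M\), it vanishes for \(t\) near \(0\) and near \(T\). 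Define
\[
 v(t,y,\rho):=-\rho\,\chi(\rho)\,\psi(t,y)
\]
in the collar, and extend \(v\) by zero elsewhere in \(\overline Q\).

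We then check the required properties in turn. First, \(v\) is smooth on \(\overline Q\), because \(\chi\) has compact support in \([0,\rho_0)\). Second, \(v|_\Sigma=0\), because of the factor \(\rho\). Third, \(v(0,\cdot)=v(T,\cdot)=0\), because \(\psi\) vanishes near \(t=0\) and \(t=T\). Finally, on \(\Sigma\),
\[
 \partial_\nu v=-\partial_\rho v\big|_{\rho=0}=\psi.
\]
Hence \(v\in\cV\) and \(C_0^\infty(\Sigma)\subset\{\partial_\nu v|_\Sigma:v\in\cV\}\).

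For the density claims, \(C_0^\infty(\Sigma)\) is dense in \(L^2(\Sigma)\) because \(\Sigma\) is an open manifold. For \(H^{1/2,1/4}(\Sigma)\), we use that the space is an intersection of anisotropic spaces of low order, namely \(H^{1/2}\) in space and \(H^{1/4}\) in time. Since \(\partial M\) is closed, there is no spatial boundary. In time, \(H^{1/4}(0,T)=H^{1/4}_0(0,T)\) because \(1/4<1/2\), so smooth functions compactly supported in \(t\) are dense. A tensor-product or mollification-with-time-cutoff argument then gives density of \(C_0^\infty(\Sigma)\) in \(H^{1/2,1/4}(\Sigma)\).

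The only step needing care is this time-cutoff density in the fractional space. We would handle it by truncating with \(\theta_k(t)\) equal to \(1\) on \([1/k,T-1/k]\) and using the standard fact that multiplication by \(\theta_k\) converges strongly to the identity on \(H^{s}(0,T)\) for \(s<1/2\). Combined with spatial mollification on \(\partial M\), this gives the density, and the density of the set of normal traces follows.
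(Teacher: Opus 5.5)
Your proposal is correct and is essentially the paper's argument. Your \(v=-\rho\,\chi(\rho)\,\psi(t,y)\) is the paper's cutoff times \(r\eta\), with defining function \(r=-\rho\) (so \(\partial_\nu r=1\)). Your density argument rests, as the paper's does, on the temporal exponent \(1/4\) lying below \(1/2\); you just spell it out in more detail through the time cutoffs \(\theta_k\) and mollification.
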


\begin{proof}
Choose a boundary collar and a smooth defining function \(r\) with
\(r|_{\partial M}=0\) and \(\partial_\nu r=1\) on \(\partial M\).  Given
\(\eta\in C_0^\infty(\Sigma)\), extend \(\eta\) smoothly in the collar,
independently of \(r\) near the boundary, and multiply \(r\eta\) by a cutoff
supported in the collar.  The resulting function belongs to \(\cV\) and has
normal derivative \(\eta\) on \(\Sigma\).  Finally,
\(C_0^\infty(\Sigma)\) is dense in \(H^{1/2,1/4}(\Sigma)\) because the
temporal exponent \(1/4\) is below the endpoint trace threshold, and it is
also dense in \(L^2(\Sigma)\).
\end{proof}

\begin{proposition}\label{prop:partial-boundary-solvability}
Fix \(\varepsilon>0\) and let \(q\in L^\infty(Q)\).  There are \(C>0\) and
\(h_0>0\) such that the following statements hold for \(0<h\leq h_0\).
\begin{enumerate}[label=\textup{(\roman*)}]
\item For \(F\in L^2(Q)\) and
\(f_-\in L^2(\Sigma_{-,\varepsilon})\), there exists \(R\in L^2(Q)\) such
that \(e^{\Phi_h}R\in\cH_+(Q)\), \(\tau_0(e^{\Phi_h}R)\in L^2(\Sigma)\), and
\begin{align}
 e^{-\Phi_h}P_q(e^{\Phi_h}R)&=F&&\text{in }Q,
 \notag\\
 \tau_0(e^{\Phi_h}R)|_{\Sigma_{-,\varepsilon}}
 &=e^{\Phi_h}f_-&&\text{on }\Sigma_{-,\varepsilon}.
 \label{eq:solvability-forward}
\end{align}
Moreover,
\begin{equation}
 \|R\|_{L^2(Q)}
 \leq C\left(h\|F\|_{L^2(Q)}
 +\left(\frac h\varepsilon\right)^{1/2}
 \|f_-\|_{L^2(\Sigma_{-,\varepsilon})}\right).
 \label{eq:solvability-forward-estimate}
\end{equation}

\item For \(F\in L^2(Q)\) and
\(f_+\in L^2(\Sigma_{+,\varepsilon})\), there exists \(R\in L^2(Q)\) such
that \(e^{-\Phi_h}R\in\cH_-(Q)\), \(\tau_0(e^{-\Phi_h}R)\in L^2(\Sigma)\),
and
\begin{align}
 e^{\Phi_h}P_q^*(e^{-\Phi_h}R)&=F&&\text{in }Q,
 \notag\\
 \tau_0(e^{-\Phi_h}R)|_{\Sigma_{+,\varepsilon}}
 &=e^{-\Phi_h}f_+&&\text{on }\Sigma_{+,\varepsilon}.
 \label{eq:solvability-adjoint}
\end{align}
Moreover,
\begin{equation}
 \|R\|_{L^2(Q)}
 \leq C\left(h\|F\|_{L^2(Q)}
 +\left(\frac h\varepsilon\right)^{1/2}
 \|f_+\|_{L^2(\Sigma_{+,\varepsilon})}\right).
 \label{eq:solvability-adjoint-estimate}
\end{equation}
\end{enumerate}
\end{proposition}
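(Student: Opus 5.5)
The plan is a standard duality (Hahn--Banach/Riesz) argument built on a boundary Carleman estimate for the adjoint conjugated operator. This estimate is proved in Appendix~\ref{app:carleman}, and I take it for granted here. For part (i), the relevant estimate is for the operator dual to \(e^{-\Phi_h}P_q e^{\Phi_h}\) under the bilinear/sesquilinear pairing, namely \(e^{\Phi_h}P_q^*e^{-\Phi_h}\). For test functions \(w\in\cV\) with \(w|_\Sigma=0\) and \(w(0)=w(T)=0\), it should read
\[
 \|w\|_{L^2(Q)}+\Bigl(\frac{1}{h}\Bigr)^{1/2}\bigl\||\partial_\nu\phi|^{1/2}\partial_\nu w\bigr\|_{L^2(\{\partial_\nu\phi<0\})}
 \le C\Bigl(h^{-1}\bigl\|e^{\Phi_h}P_q^*(e^{-\Phi_h}w)\bigr\|_{L^2(Q)}+h^{-1/2}\bigl\||\partial_\nu\phi|^{1/2}\partial_\nu w\bigr\|_{L^2(\{\partial_\nu\phi>0\})}\Bigr),
\]
after the semiclassical scaling \(\cL_{-,h}\). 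Here the time derivative contributes a sign-definite term because the linear-in-\(t\) part \(\beta^2t/h\) of the weight is convex in the right direction. The bounded potential is absorbed for \(h\) small.

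\emph{Step 1.} Restrict to the subspace
\[
 \mathcal S=\bigl\{(e^{\Phi_h}P_q^*e^{-\Phi_h}w,\ \partial_\nu w|_{\Sigma\setminus\Sigma_{-,\varepsilon}}) : w\in\cV\bigr\}\subset L^2(Q)\times L^2(\Sigma\setminus\Sigma_{-,\varepsilon}).
\]
On \(\Sigma_{-,\varepsilon}\), where \(|\partial_\nu\phi|\ge\varepsilon\), the Carleman estimate controls \(\|\partial_\nu w\|_{L^2(\Sigma_{-,\varepsilon})}\le C(h/\varepsilon)^{1/2}(h^{-1}\|\cdot\|+h^{-1/2}\|\cdot\|)\). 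Define the conjugate-linear functional
\[
 \ell(\text{pair}) = (F,w)_{L^2(Q)} - \bigl(e^{\Phi_h}f_-\cdot e^{-\Phi_h},\ \partial_\nu w\bigr)_{L^2(\Sigma_{-,\varepsilon})},
\]
with the weights matched so that the pairing becomes \((f_-,\partial_\nu w)\) in conjugated variables. Injectivity of the map \(w\mapsto\text{pair}\) follows from the estimate, so \(\ell\) is well defined, and
\[
 |\ell|\le C\bigl(h\|F\|+(h/\varepsilon)^{1/2}\|f_-\|\bigr)\times\text{(weighted norm of the pair)}.
\]

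\emph{Step 2.} Extend \(\ell\) by Hahn--Banach to the whole weighted product space, preserving the norm. Riesz then yields \((R,R_b)\) satisfying the bound~\eqref{eq:solvability-forward-estimate}. Unwinding the identity \(\ell(\cdot)=((R,R_b),\cdot)\) for all \(w\in\cV\) gives \(e^{-\Phi_h}P_q(e^{\Phi_h}R)=F\) in distributions; hence \(e^{\Phi_h}R\in\cH_+(Q)\) since \(q\) is bounded. Green's formula on \(\cH_+\), using the generalized traces of Appendix~\ref{app:well-posedness} and the vanishing of \(w\) at \(t=0,T\) and on \(\Sigma\), then identifies the Dirichlet trace: \(\tau_0(e^{\Phi_h}R)\) equals \(e^{\Phi_h}f_-\) on \(\Sigma_{-,\varepsilon}\), and equals a multiple of \(R_b\) on the complement. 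It is therefore in \(L^2(\Sigma)\). The density statement in Lemma~\ref{lem:normal-trace-density} ensures the normal derivatives of \(\cV\) are rich enough to pin the trace down as a distribution on all of \(\Sigma\).

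\emph{Step 3.} Part (ii) is identical with roles reversed: use the Carleman estimate for the forward conjugated operator \(e^{-\Phi_h}P_qe^{\Phi_h}\), which controls the boundary on \(\{\partial_\nu\phi>0\}\) by the \(\{\partial_\nu\phi<0\}\) part, and then carry out the same duality argument. The main obstacle is Step 2's trace identification: justifying Green's identity for \(e^{\Phi_h}R\), which lies only in the graph space, tested against \(w\in\cV\). The rigorous fix is to use the generalized Green formula from Appendix~\ref{app:well-posedness} with \(\tau_0\in H^{-1/2,-1/4}\), paired against \(\partial_\nu w\in H^{1/2,1/4}\), and then upgrade to \(L^2\) via the explicit representation through \(R_b\). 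I would also double-check the factor \(\varepsilon^{-1/2}\) and the powers of \(h\) arising from weighting the boundary norm by \(|\partial_\nu\phi|\).
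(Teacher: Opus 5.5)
Your proposal is essentially the paper's proof: restrict the adjoint Carleman estimate \eqref{eq:carleman-adjoint} to \(\cV\), apply Hahn--Banach and Riesz to the functional \((F,v)_{L^2(Q)}-(f_-,\partial_\nu v)_{L^2(\Sigma_{-,\varepsilon})}\) on the image of \(v\mapsto(e^{\Phi_h}P_q^*e^{-\Phi_h}v,\ \partial_\nu v|_{\Sigma\setminus\Sigma_{-,\varepsilon}})\), and identify \(\tau_0(e^{\Phi_h}R)\) via the generalized Green identity and \cref{lem:normal-trace-density}. Using the unweighted trace on all of \(\Sigma\setminus\Sigma_{-,\varepsilon}\), instead of the paper's \(|\partial_\nu\phi|^{1/2}\partial_\nu v\) on \(\{\partial_\nu\phi>0\}\), is harmless because \(|\partial_\nu\phi|\) is bounded. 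The only repair needed is the \(h\)-bookkeeping you flagged: dividing \eqref{eq:carleman-adjoint} by \(h^2\) gives \(\|v\|+h^{1/2}\||\partial_\nu\phi|^{1/2}\partial_\nu v\|_{L^2(\{\partial_\nu\phi<0\})}\le C\bigl(h\|e^{\Phi_h}P_q^*(e^{-\Phi_h}v)\|+h^{1/2}\||\partial_\nu\phi|^{1/2}\partial_\nu v\|_{L^2(\{\partial_\nu\phi>0\})}\bigr)\), so the boundary terms carry \(h^{1/2}\) rather than your \(h^{-1/2}\); with \(|\partial_\nu\phi|\geq\varepsilon\) on \(\Sigma_{-,\varepsilon}\) this gives \(\|\partial_\nu v\|_{L^2(\Sigma_{-,\varepsilon})}\le C\varepsilon^{-1/2}\bigl(h^{1/2}\|e^{\Phi_h}P_q^*(e^{-\Phi_h}v)\|+\||\partial_\nu\phi|^{1/2}\partial_\nu v\|_{L^2(\{\partial_\nu\phi>0\})}\bigr)\), which is precisely what produces \(h\|F\|\) and \((h/\varepsilon)^{1/2}\|f_-\|\) in \eqref{eq:solvability-forward-estimate}.
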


\begin{proof}
We prove (i) first.  Let
\[
 \cA_h:=e^{\Phi_h}P_q^*e^{-\Phi_h}
\]
and let \(\cV\) be the test space introduced in
\cref{lem:normal-trace-density}.
The adjoint Carleman estimate only requires \(v(T,\cdot)=0\).  The additional
condition \(v(0,\cdot)=0\) is imposed for the duality argument and removes the
remaining temporal trace term; restricting to \(\cV\) is therefore compatible
with the estimate.
Divide the adjoint Carleman estimate \eqref{eq:carleman-adjoint} by \(h^4\).
Its favorable boundary term and
\(|\partial_\nu\phi|\geq\varepsilon\) on
\(\Sigma_{-,\varepsilon}\) give
\begin{align}
 C\left(\frac1h\|v\|_{L^2(Q)}
 +\left(\frac\varepsilon h\right)^{1/2}
 \|\partial_\nu v\|_{L^2(\Sigma_{-,\varepsilon})}\right)
 &\leq\|\cA_hv\|_{L^2(Q)}\notag\\
 &\quad+h^{-1/2}
 \bigl\||\partial_\nu\phi|^{1/2}\partial_\nu v\bigr\|_
 {L^2(\{\partial_\nu\phi>0\})}
 \label{eq:duality-carleman}
\end{align}
for \(v\in\cV\).

Let
\[
 \cY:=L^2(Q)\times L^2(\{\partial_\nu\phi>0\})
\]
with norm
\[
 \|(w,w_+)\|_{\cY}^2
 :=\|w\|_{L^2(Q)}^2+h^{-1}\|w_+\|_
 {L^2(\{\partial_\nu\phi>0\})}^2,
\]
and define
\[
 \cX:=\left\{
 \left(\cA_hv,
 |\partial_\nu\phi|^{1/2}\partial_\nu v
 \big|_{\{\partial_\nu\phi>0\}}\right):v\in\cV\right\}\subset\cY.
\]
Estimate \eqref{eq:duality-carleman} makes the following functional on \(\cX\)
well-defined:
\begin{align*}
 \ell\left(\cA_hv,
 |\partial_\nu\phi|^{1/2}\partial_\nu v
 \big|_{\{\partial_\nu\phi>0\}}\right)
 :=(F,v)_{L^2(Q)}
 -(f_-,\partial_\nu v)_{L^2(\Sigma_{-,\varepsilon})}.
\end{align*}
Indeed,
\begin{align*}
 |\ell|
 \leq C\left(h\|F\|_{L^2(Q)}
 +\left(\frac h\varepsilon\right)^{1/2}
 \|f_-\|_{L^2(\Sigma_{-,\varepsilon})}\right)
 \left\|\left(\cA_hv,
 |\partial_\nu\phi|^{1/2}\partial_\nu v\right)\right\|_{\cY}.
\end{align*}
Hahn--Banach and Riesz representation yield
\(R\in L^2(Q)\) and
\(S_+\in L^2(\{\partial_\nu\phi>0\})\).  Since the second component of
\(\cY\) carries the weight \(h^{-1}\), the Riesz representation is
\begin{align*}
 &(R,\cA_hv)_{L^2(Q)}
 +h^{-1}(S_+,|\partial_\nu\phi|^{1/2}\partial_\nu v)_
 {L^2(\{\partial_\nu\phi>0\})}\\
 &\qquad=(F,v)_{L^2(Q)}
 -(f_-,\partial_\nu v)_{L^2(\Sigma_{-,\varepsilon})},
 \qquad v\in\cV.
\end{align*}
The norm of the Riesz vector is bounded by the right-hand side of the
preceding estimate.  In particular, its first component \(R\) satisfies
\eqref{eq:solvability-forward-estimate}.  Set \(R_+:=h^{-1}S_+\).  Then
\begin{align}
 &(R,\cA_hv)_{L^2(Q)}
 +(R_+,|\partial_\nu\phi|^{1/2}\partial_\nu v)_
 {L^2(\{\partial_\nu\phi>0\})}\notag\\
 &\qquad=(F,v)_{L^2(Q)}
 -(f_-,\partial_\nu v)_{L^2(\Sigma_{-,\varepsilon})},
 \qquad v\in\cV,
 \label{eq:riesz-solvability}
\end{align}

Taking \(v\in C_0^\infty(Q)\) in \eqref{eq:riesz-solvability} gives
\[
 e^{-\Phi_h}P_q(e^{\Phi_h}R)=F
\]
in distributions.  Put \(W=e^{\Phi_h}R\).  For fixed \(h\), the exponential is
smooth and bounded on \(\overline Q\).  Since \(P_qW=e^{\Phi_h}F\in L^2(Q)\),
we have \(W\in\cH_+(Q)\), so its generalized Dirichlet trace is defined.

Apply the generalized Green identity from \cref{app:well-posedness} to \(W\)
and \(e^{-\Phi_h}v\).  The equation and the vanishing temporal traces of \(v\)
give
\[
 (R,\cA_hv)_{L^2(Q)}
 =(F,v)_{L^2(Q)}
 -(\tau_0W,e^{-\Phi_h}\partial_\nu v)_
 {H^{-1/2,-1/4}(\Sigma),H^{1/2,1/4}(\Sigma)}.
\]
For each fixed \(h\), multiplication by \(e^{-\Phi_h}\) is a smooth
invertible multiplier on the lateral trace spaces.  Hence
\cref{lem:normal-trace-density} also gives density for the weighted normal
traces \(e^{-\Phi_h}\partial_\nu v\).
Comparison with \eqref{eq:riesz-solvability} shows that \(\tau_0W\) is
represented by the \(L^2(\Sigma)\) function
\[
 \Gamma_h(t,x)=
 \begin{cases}
  e^{\Phi_h}f_-(t,x),&(t,x)\in\Sigma_{-,\varepsilon},\\
  e^{\Phi_h}|\partial_\nu\phi|^{1/2}R_+(t,x),
     &\partial_\nu\phi(x)>0,\\
 0,&\text{otherwise}.
 \end{cases}
\]
By this density, comparison against all normal traces identifies the
generalized trace \(\tau_0W\) with \(\Gamma_h\).  In
particular, \(\tau_0W\in L^2(\Sigma)\), and its restriction to
\(\Sigma_{-,\varepsilon}\) is the prescribed trace.  This proves (i).

For (ii), repeat the same duality argument with
\(\mathcal B_h=e^{-\Phi_h}P_qe^{\Phi_h}\) and the forward Carleman estimate
\eqref{eq:carleman-forward}.  Indeed, since \(\Phi_h\) is real,
\(\mathcal B_h\) is the formal \(L^2(Q)\)-adjoint of
\(e^{\Phi_h}P_q^*e^{-\Phi_h}\), where \(P_q^*\) is the formal adjoint defined
in \eqref{eq:operators-adjoint-transpose}.  The favorable boundary term in the
forward estimate lies on \(\{\partial_\nu\phi>0\}\); in the duality argument
this is precisely the side on which the trace \(f_+\) is prescribed, while the
Riesz boundary component lies on the opposite side.  Thus the prescribed trace
is imposed on \(\Sigma_{+,\varepsilon}\).  The rest of the argument is exactly
as above and gives \eqref{eq:solvability-adjoint} and
\eqref{eq:solvability-adjoint-estimate} with the same powers of \(h\) and
\(\varepsilon\), proving (ii).
\end{proof}

\subsection{Boundary bounds for the quasimodes}

\begin{lemma}\label{lem:quasimode-boundary-bound}
For every fixed \(\varepsilon>0\) and fixed
\(\zeta_\pm\in C^\infty([0,T])\), the quasimodes chosen as in
\cref{prop:quasimode-concentration} satisfy
\begin{equation}
 \|V_{+,h}\|_{L^2(\Sigma_{-,\varepsilon})}
 +\|V_{-,h}\|_{L^2(\Sigma_{+,\varepsilon})}=O(1).
 \label{eq:quasimode-boundary-bound}
\end{equation}
The estimate is uniform for \(\sigma\) in a fixed compact subset of \(\R\).
\end{lemma}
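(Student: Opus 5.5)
Since \(|V_{\pm,h}|=|\zeta_\pm(t)|\,|v_s(x')|\) by \eqref{eq:parabolic-quasimode-ansatz} (the factor \(e^{i\eta x_1}\) has modulus one because \(\eta\in\R\)), and \(\zeta_\pm\) are bounded on \([0,T]\), the estimate reduces to the single spatial bound
\[
 \int_{\partial M}|v_s(x')|^2\,dS_g(x)=O(1),
\]
uniformly for \(\sigma\) in compact sets. Then integrating in \(t\) over \((0,T)\) controls both terms of \eqref{eq:quasimode-boundary-bound}, since \(\Sigma_{\pm,\varepsilon}\subset\Sigma\). The sets \(\Sigma_{\pm,\varepsilon}\) and the value of \(\varepsilon\) play no further role; we simply bound by the full lateral boundary.

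To prove the spatial bound, we use the structure of \(v_s\) from the proof of \cref{lem:global-transversal-quasimode}. It is a finite sum of local beams \(\vartheta_j e^{is\Theta}a_h\) supported in Fermi tubes, each with \(|e^{is\Theta}a_h|\leq Ch^{-d/4}e^{-c|y|^2/h}\) by \eqref{eq:phase-positivity}, the bound on \(\operatorname{Re}\Theta\), and compactness of \(\sigma\). It therefore suffices to show that, for each chart, \(\int_{\partial M\cap\Omega}h^{-d/2}e^{-2c|y(x')|^2/h}\,dS_g=O(1)\). Note that \(\partial M\) is a hypersurface in \(\R\times M_0\), and \(x'\) ranges over the projection; only the dependence on \(y\) matters.

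\emph{Key step.} In coordinates \((x_1,r,y)\) on \(\R\times\Omega\), the integrand depends only on \(y\), which is Gaussian concentrated at scale \(h^{1/2}\) in \(d=n-2\) variables. The boundary \(\partial M\) is a compact smooth hypersurface of dimension \(n-1=d+1\). I would cover \(\partial M\cap(\R\times\Omega)\) by finitely many patches where \(\partial M\) is a graph over \(d+1\) of the \(n\) coordinates \((x_1,r,y)\).

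If \(\partial M\) is a graph over \((x_1,r)\) and some \(d-1\) of the \(y\) variables, one \(y\) direction is lost. This is the main obstacle: the naive Gaussian integral then gives only \(h^{-1/2}\). To handle it I would exploit the fact that the map \(x\mapsto y(x')\) restricted to \(\partial M\) near the tube is, generically, a submersion. More robustly, I would use the coarea/trace inequality
\[
 \|w\|_{L^2(\partial M)}^2\leq C\bigl(\|w\|_{L^2(M)}^2+\|w\|_{L^2(M)}\|\nabla w\|_{L^2(M)}\bigr),
\]
applied to \(w=\chi(x_1)v_s(x')\). This gives only \(O(h^{-1/2})\), since \(\|\nabla v_s\|=O(h^{-1})\), so it is not sufficient by itself.

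The better route is to apply the trace inequality in the \(x_1\)-direction only. Fix \(x'\) and consider the fiber \(\{x_1:(x_1,x')\in M\}\). On the part of \(\partial M\) where \(|\partial_\nu\phi|=|\nu^{x_1}|\ge\varepsilon\), the boundary is locally a graph \(x_1=\kappa(x')\), and the surface measure satisfies \(dS_g\leq\varepsilon^{-1}dV_{g_0}(x')\). Hence
\[
 \int_{\Sigma_{\pm,\varepsilon}}|V_{\mp,h}|^2\leq C\varepsilon^{-1}N\|\zeta\|_\infty^2\|v_s\|_{L^2(M_0)}^2=O(1),
\]
where \(N\) bounds the number of graph sheets, which is finite by compactness. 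This is where \(\varepsilon>0\) is genuinely used. I would therefore abandon the full-boundary reduction above and work directly on \(\Sigma_{\pm,\varepsilon}\) via this projection onto \(M_0\), together with \eqref{eq:global-beam-L2-bound}. Uniformity in \(\sigma\) is inherited from \cref{lem:global-transversal-quasimode}.
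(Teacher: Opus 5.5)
Your final argument is correct and is essentially the paper's proof: on $\{|\partial_\nu\phi|\geq\varepsilon\}$ the projection $(x_1,x')\mapsto x'$ is a local diffeomorphism, so this part of $\partial M$ is covered by finitely many graphs $x_1=\kappa(x')$ with $dS_g\leq\varepsilon^{-1}dV_{g_0}$, and the bound then follows from $|V_{\pm,h}|=|\zeta_\pm|\,|v_s|$ and $\|v_s\|_{L^2(M_0)}=O(1)$. You should delete the opening reduction to the full lateral boundary, including the claim that $\varepsilon$ plays no role: as you observed yourself, that bound is not available in general near the glancing set, and the proof does not need it.
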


\begin{proof}
On the part of \(\partial M\) where
\(|\partial_\nu x_1|\geq\varepsilon\), the projection
\[
 \pi:\partial M\to M_0,\qquad \pi(x_1,x')=x',
\]
is a local diffeomorphism.  Compactness gives a finite cover by boundary graphs
\(x_1=f_k(x')\), whose surface Jacobians are bounded in terms of
\(\varepsilon\).  By \eqref{eq:parabolic-quasimode-ansatz},
\[
 V_{+,h}=\zeta_+(t)e^{i\alpha^2\sigma x_1}v_s(x'),\qquad
 V_{-,h}=\zeta_-(t)e^{i\alpha^2\sigma x_1}\overline{v_s(x')}.
\]
The result follows by integrating over the finitely many graphs and using
\(\|v_s\|_{L^2(M_0)}=O(1)\).
\end{proof}

\subsection{Exact solutions with prescribed lateral supports}

\begin{proposition}\label{prop:exact-go-solutions}
Let \(q\in L^\infty(Q)\), let \(\gamma:[0,L]\to M_0\) be a unit-speed
non-tangential geodesic, and fix
\(\sigma\in\R\) and \(\zeta_\pm\in C^\infty([0,T])\).  There exist families
\begin{equation}
 u_{+,h}=e^{\Phi_h}(V_{+,h}+R_{+,h}),\qquad
 u_{-,h}=e^{-\Phi_h}(V_{-,h}+R_{-,h})
 \label{eq:go-solutions}
\end{equation}
such that
\begin{align}
 P_qu_{+,h}&=0\quad\text{in }Q,&
 \supp(\tau_0u_{+,h})&\subset U,
 \label{eq:go-forward}\\
 P_q^{\mathrm t}u_{-,h}&=0\quad\text{in }Q,&
 \supp(\tau_0u_{-,h})&\subset V.
 \label{eq:go-backward}
\end{align}
Moreover,
\[
 u_{+,h}\in\cH_+(Q),\qquad
 u_{-,h}\in\cH_-(Q),\qquad
 \tau_0u_{\pm,h}\in L^2(\Sigma),
\]
and
\begin{equation}
 \|R_{+,h}\|_{L^2(Q)}+\|R_{-,h}\|_{L^2(Q)}=O(h^{1/2}).
 \label{eq:go-remainder}
\end{equation}
Here \(V_{+,h}\) is constructed for \(q\).  For \(V_{-,h}\), the construction
in \cref{sec:gaussian-beams} is applied to the coefficient \(\overline q\), so
that the adjoint there is \(P_q^{\mathrm t}=P_{\overline q}^*\).  This does not
alter the concentration formula.
\end{proposition}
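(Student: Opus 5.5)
We build the corrections \(R_{\pm,h}\) with \cref{prop:partial-boundary-solvability}, using the quasimodes of \cref{prop:parabolic-quasimodes} as the leading terms. We treat the forward solution \(u_{+,h}\); the adjoint one is symmetric.

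\emph{Choice of \(\varepsilon\).} Since \(U_0\) is an open neighborhood of the compact set \(\partial M_+=\{\partial_\nu\phi\geq0\}\), continuity and compactness give \(\varepsilon>0\) with \(\{x\in\partial M:\partial_\nu\phi(x)>-2\varepsilon\}\subset U_0\). Equivalently, \(\partial M\setminus U_0\) lies in the interior of \(\{\partial_\nu\phi\leq-\varepsilon\}\). Likewise we shrink \(\varepsilon\) so that \(\partial M\setminus V_0\) lies in the interior of \(\{\partial_\nu\phi\geq\varepsilon\}\).

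\emph{Forward equation.} We apply \cref{prop:partial-boundary-solvability}(i) with
\[
 F=-h^{-2}\cL_{+,h}V_{+,h}=-e^{-\Phi_h}P_q(e^{\Phi_h}V_{+,h}),\qquad f_-=-V_{+,h}|_{\Sigma_{-,\varepsilon}}.
\]
This gives \(R_{+,h}\) with \(e^{-\Phi_h}P_q(e^{\Phi_h}R_{+,h})=F\). Hence \(P_qu_{+,h}=0\) in \(Q\).

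\emph{Lateral trace.} On \(\Sigma_{-,\varepsilon}\) we have \(\tau_0u_{+,h}=e^{\Phi_h}V_{+,h}+e^{\Phi_h}f_-=0\). Here \(\tau_0(e^{\Phi_h}V_{+,h})\) is the classical trace, since \(V_{+,h}\) is smooth. Therefore \(\supp\tau_0u_{+,h}\) is contained in the closure of \(\Sigma\setminus\Sigma_{-,\varepsilon}\), which lies in \((0,T)\times\{\partial_\nu\phi\geq-\varepsilon\}\subset U\). The membership \(u_{+,h}\in\cH_+(Q)\) and \(\tau_0u_{+,h}\in L^2(\Sigma)\) follow from the proposition together with the smoothness of \(e^{\Phi_h}V_{+,h}\).

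\emph{Remainder bound.} By \eqref{eq:solvability-forward-estimate},
\[
 \|R_{+,h}\|_{L^2(Q)}\leq C\bigl(h\cdot h^{-2}\|\cL_{+,h}V_{+,h}\|_{L^2(Q)}+(h/\varepsilon)^{1/2}\|V_{+,h}\|_{L^2(\Sigma_{-,\varepsilon})}\bigr).
\]
The first term is \(h^{-1}O(h^{3/2})=O(h^{1/2})\) by \eqref{eq:quasimode-estimates}. The second is \(O(h^{1/2})\) by \cref{lem:quasimode-boundary-bound}, since \(\varepsilon\) is fixed. Together these give \eqref{eq:go-remainder}.

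\emph{Adjoint equation.} We replace \(q\) by \(\overline q\), so that \(P_{\overline q}^*=P_q^{\mathrm t}\). We build \(V_{-,h}\) for \(\overline q\) and apply \cref{prop:partial-boundary-solvability}(ii) with
\[
 F=-h^{-2}\cL_{-,h}V_{-,h},\qquad f_+=-V_{-,h}|_{\Sigma_{+,\varepsilon}}.
\]
This kills the trace on \(\Sigma_{+,\varepsilon}\), so \(\supp\tau_0u_{-,h}\subset V\). The remainder estimate is identical, using \eqref{eq:solvability-adjoint-estimate} and the \(\Sigma_{+,\varepsilon}\) part of \cref{lem:quasimode-boundary-bound}. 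Since \(\overline q\in L^\infty\), \cref{prop:parabolic-quasimodes} still applies. The concentration formula is unchanged: it depends only on \(v_s\), \(\zeta_\pm\) and the factor \(e^{i\eta x_1}\), and none of these involve the potential.

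The main delicacy is the support bookkeeping. The trace is only known to vanish on \(\Sigma_{-,\varepsilon}\), and the distributional support is a closed set relative to \(\Sigma\). The margin \(2\varepsilon\) in the choice of \(\varepsilon\) is what guarantees that this closure stays inside the open set \(U\), rather than merely inside \(\overline U\). The rest follows directly from the earlier results.
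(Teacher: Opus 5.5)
Your proof is correct and follows the paper's argument: you feed the residual \(-h^{-2}\cL_{\pm,h}V_{\pm,h}\) and the negative boundary values of \(V_{\pm,h}\) into \cref{prop:partial-boundary-solvability}, and the \(O(h^{1/2})\) bound comes from \eqref{eq:quasimode-estimates} and \cref{lem:quasimode-boundary-bound} exactly as you compute. The only difference is bookkeeping: the paper introduces intermediate neighborhoods with \(\overline{U_0'}\subset U_0\), \(\overline{V_0'}\subset V_0\) and prescribes \(-V_{\pm,h}\) only on \(\Sigma\setminus U'\) (resp.\ \(\Sigma\setminus V'\)), with zero on the rest of \(\Sigma_{\mp,\varepsilon}\), whereas your \(2\varepsilon\) margin lets you cancel \(V_{\pm,h}\) on all of \(\Sigma_{\mp,\varepsilon}\), which is equally valid.
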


\begin{proof}
Choose open neighborhoods \(U_0'\), \(V_0'\) of \(\partial M_+\),
\(\partial M_-\), respectively, such that
\[
 \overline{U_0'}\subset U_0,\qquad
 \overline{V_0'}\subset V_0,
\]
and put \(U'=(0,T)\times U_0'\), \(V'=(0,T)\times V_0'\).
Compactness gives \(\varepsilon>0\) such that
\begin{equation}
 \Sigma\setminus U'\subset\Sigma_{-,\varepsilon},\qquad
 \Sigma\setminus V'\subset\Sigma_{+,\varepsilon}.
 \label{eq:UV-epsilon-inclusions}
\end{equation}

For the growing solution, set
\[
 F_{+,h}:=-e^{-\Phi_h}P_q(e^{\Phi_h}V_{+,h})
 =-h^{-2}\cL_{+,h}V_{+,h}.
\]
Then \(\|F_{+,h}\|_{L^2(Q)}=O(h^{-1/2})\).  On
\(\Sigma_{-,\varepsilon}\), define
\[
 f_{+,h}:=
 \begin{cases}
  -V_{+,h},&(t,x)\in\Sigma\setminus U',\\
  0,&(t,x)\in\Sigma_{-,\varepsilon}\cap U'.
 \end{cases}
\]
By \cref{lem:quasimode-boundary-bound},
\(\|f_{+,h}\|_{L^2(\Sigma_{-,\varepsilon})}=O(1)\).
Apply part (i) of \cref{prop:partial-boundary-solvability}.  The correction
\(R_{+,h}\) cancels \(V_{+,h}\) on \(\Sigma\setminus U'\).  Indeed, on this
set the prescribed trace gives
\[
 \tau_0u_{+,h}
 =e^{\Phi_h}V_{+,h}+\tau_0(e^{\Phi_h}R_{+,h})
 =e^{\Phi_h}(V_{+,h}+f_{+,h})=0.
\]
Hence
\[
 \supp(\tau_0u_{+,h})\subset\overline{U'}\subset U.
\]
Moreover,
\[
 \|R_{+,h}\|_{L^2(Q)}
 \leq C\left(h\,O(h^{-1/2})
 +\left(\frac h\varepsilon\right)^{1/2}O(1)\right)
 =O(h^{1/2}).
\]

For the decaying solution, let
\[
 \cL_{-,h}^{\mathrm t}
 :=h^2e^{\Phi_h}P_q^{\mathrm t}e^{-\Phi_h}
 =h^2e^{\Phi_h}P_{\overline q}^*e^{-\Phi_h},
\]
and set
\[
 F_{-,h}:=-e^{\Phi_h}P_q^{\mathrm t}(e^{-\Phi_h}V_{-,h})
 =-h^{-2}\cL_{-,h}^{\mathrm t}V_{-,h}.
\]
Again \(\|F_{-,h}\|_{L^2(Q)}=O(h^{-1/2})\).  Define on
\(\Sigma_{+,\varepsilon}\)
\[
 f_{-,h}:=
 \begin{cases}
  -V_{-,h},&(t,x)\in\Sigma\setminus V',\\
  0,&(t,x)\in\Sigma_{+,\varepsilon}\cap V'.
 \end{cases}
\]
Its \(L^2(\Sigma_{+,\varepsilon})\)-norm is \(O(1)\).  Apply part (ii) of
\cref{prop:partial-boundary-solvability} with coefficient \(\overline q\).
On \(\Sigma\setminus V'\), the prescribed trace gives
\[
 \tau_0u_{-,h}
 =e^{-\Phi_h}V_{-,h}+\tau_0(e^{-\Phi_h}R_{-,h})
 =e^{-\Phi_h}(V_{-,h}+f_{-,h})=0.
\]
The correction also satisfies
\[
 e^{\Phi_h}P_q^{\mathrm t}
 \bigl(e^{-\Phi_h}(V_{-,h}+R_{-,h})\bigr)=0,
\qquad
 \supp(\tau_0u_{-,h})\subset\overline{V'}\subset V,
\]
and \(\|R_{-,h}\|_{L^2(Q)}=O(h^{1/2})\).  The remaining assertions follow
from the solvability proposition.
\end{proof}

\begin{remark}\label{rem:beam-order-sufficient}
The \(O(h^{3/2})\) quasimode residual in
\eqref{eq:quasimode-estimates} is sufficient here.  After division by \(h^2\)
it produces a source of size \(O(h^{-1/2})\), and the boundary solvability
estimate gives the \(O(h^{1/2})\) correction in \eqref{eq:go-remainder}.
In the uniqueness argument the lateral boundary term vanishes exactly by the
support conditions in \(U\) and \(V\); it is not estimated through a trace of
the correction.  Thus no higher-order transversal quasimode is needed for the
present partial input--output formulation.
\end{remark}

\section{Proof of the uniqueness result}
\label{sec:uniqueness}

By \cref{prop:map-reduction}, it is enough to work with the product metric
\eqref{eq:product-metric-standing}.  Let \(q_1,q_2\in C(\overline Q;\C)\) satisfy
\begin{equation}
 q_1=q_2\quad\text{on }\Sigma,\qquad
 \Lambda_{g,q_1}^{U,V}=\Lambda_{g,q_2}^{U,V},
 \label{eq:measurement-equality}
\end{equation}
and put \(p=q_2-q_1\).

\subsection{The integral identity}

Fix the parameters in \cref{eq:beta-alpha}, a unit-speed non-tangential geodesic
\(\gamma:[0,L]\to M_0\), \(\sigma\in\R\), and
\(\zeta_\pm\in C^\infty([0,T])\).  Let
\[
 u_{+,h}=e^{\Phi_h}(V_{+,h}+R_{+,h})
\]
be the growing solution from \cref{prop:exact-go-solutions} for \(q_1\).  Thus
\begin{equation}
 P_{q_1}u_{+,h}=0,\qquad
 \supp(\tau_0u_{+,h})\subset U,\qquad
 \tau_0u_{+,h}\in L^2(\Sigma).
 \label{eq:forward-go-uniqueness}
\end{equation}
Use \(r_0u_{+,h}\) and \(\tau_0u_{+,h}\) as the initial and Dirichlet inputs
for the equation with potential \(q_2\), and denote its transposition solution by
\(v_h\).  Equality of the partial maps gives
\begin{equation}
 r_Tv_h=r_Tu_{+,h},\qquad
 \tau_1v_h|_V=\tau_1u_{+,h}|_V.
 \label{eq:equal-output-traces}
\end{equation}

Set \(w_h=u_{+,h}-v_h\).  Then
\begin{align}
 P_{q_2}w_h&=p\,u_{+,h}&&\text{in }Q,\notag\\
 r_0w_h&=0,&
 \tau_0w_h&=0\quad\text{on }\Sigma.
 \label{eq:w-equation}
\end{align}
These are the initial and lateral conditions coming from using the same input.
Separately, the equality of the measured outputs in
\eqref{eq:equal-output-traces} gives the additional relations
\begin{equation}
 r_Tw_h=0,\qquad \tau_1w_h|_V=0.
 \label{eq:w-output-zero}
\end{equation}

For fixed \(h\), the source \(p u_{+,h}\) belongs to \(L^2(Q)\).  At this
point \(w_h\) is a difference of transposition solutions, so we justify the
strong regularity used below.  Let \(\widetilde w_h\) be the unique strong
solution of
\[
 P_{q_2}\widetilde w_h=p u_{+,h},\qquad
 r_0\widetilde w_h=0,\qquad \tau_0\widetilde w_h=0.
\]
Standard \(L^2\) parabolic regularity gives
\(\widetilde w_h\in H^{2,1}(Q)\).  Both \(w_h\) and
\(\widetilde w_h\) solve the same initial--boundary value problem in the
transposition sense, and uniqueness in
\cref{prop:appendix-reduced-transposition} therefore gives
\(w_h=\widetilde w_h\).  Consequently,
\begin{equation}
 w_h\in H^{2,1}(Q),\qquad
 \tau_1w_h\in H^{1/2,1/4}(\Sigma).
 \label{eq:w-regularity}
\end{equation}

Take the decaying solution for \(q_2\),
\[
 u_{-,h}=e^{-\Phi_h}(V_{-,h}+R_{-,h}).
\]
It satisfies
\begin{equation}
 P_{q_2}^{\mathrm t}u_{-,h}=0,\qquad
 \supp(\tau_0u_{-,h})\subset V,\qquad
 \tau_0u_{-,h}\in L^2(\Sigma).
 \label{eq:backward-go-uniqueness}
\end{equation}
We use the bilinear Green identity associated with the formal transpose
\(P_q^{\mathrm t}\).  For \(w,z\in C^\infty(\overline Q)\), and by density whenever
the displayed traces are defined,
\begin{align}
 &\int_Q(P_qw)z\,dV_gdt-\int_Qw(P_q^{\mathrm t}z)\,dV_gdt\notag\\
 &\quad=\int_M\bigl[(r_Tw)(r_Tz)-(r_0w)(r_0z)\bigr]\,dV_g\notag\\
 &\qquad\quad+\int_\Sigma
 \bigl[(\tau_0w)(\tau_1z)-(\tau_1w)(\tau_0z)\bigr]\,dS_gdt.
 \label{eq:bilinear-green}
\end{align}
For the present pair \((w_h,u_{-,h})\), the identity follows by graph-norm
approximation of \(u_{-,h}\); the terms that are not classical are understood
by duality.  Equations \eqref{eq:w-equation} and \eqref{eq:w-output-zero},
together with \eqref{eq:backward-go-uniqueness}, reduce the right-hand side to
\[
 -\int_\Sigma(\tau_1w_h)(\tau_0u_{-,h})\,dS_gdt.
\]
This is zero because the first factor vanishes on \(V\) and the second is
supported in \(V\).  Hence
\begin{equation}
 \int_Qp(t,x)u_{+,h}(t,x)u_{-,h}(t,x)\,dV_gdt=0.
 \label{eq:integral-identity}
\end{equation}
This is precisely where the two partial boundary regions are used: the growing
trace is an admissible input supported in \(U\), equality of the measurements
gives the vanishing of \(\tau_1w_h\) on \(V\), and the decaying trace is
supported in \(V\).  No solution with unrestricted lateral trace is needed.

\subsection{Passage to the Gaussian beam limit}

The exponential factors in \eqref{eq:integral-identity} cancel, giving
\begin{equation}
 0=\int_Qp\,(V_{+,h}+R_{+,h})(V_{-,h}+R_{-,h})\,dV_gdt.
 \label{eq:expanded-integral-identity}
\end{equation}
By \cref{eq:quasimode-estimates,eq:go-remainder},
\[
 \|V_{\pm,h}\|_{L^2(Q)}=O(1),\qquad
 \|R_{\pm,h}\|_{L^2(Q)}=O(h^{1/2}).
\]
The three terms in \eqref{eq:expanded-integral-identity} containing a remainder
are respectively \(O(h^{1/2})\), \(O(h^{1/2})\), and \(O(h)\).  Therefore
\begin{equation}
 \lim_{h\to0}\int_Qp(t,x)V_{+,h}(t,x)V_{-,h}(t,x)\,dV_gdt=0.
 \label{eq:leading-quasimode-product}
\end{equation}

Extend \(p(t,\cdot)\) by zero from \(M\) to \(\R\times M_0\), and denote the
extension by \(\widetilde p\).  By \cref{rem:boundary-agreement}, this extension
is continuous, and it is compactly supported in \(x_1\).  Applying
\cref{prop:quasimode-concentration} to
\eqref{eq:leading-quasimode-product} yields
\begin{equation}
 \int_0^T\int_\R\int_0^L
 \widetilde p(t,x_1,\gamma(r))\zeta_+(t)\zeta_-(t)
 e^{2i\alpha^2\sigma x_1}e^{-2\alpha\sigma r}
 \,dr\,dx_1\,dt=0.
 \label{eq:gaussian-limit-identity}
\end{equation}
Choose \(\zeta_-\equiv1\) and vary \(\zeta_+\in C_0^\infty(0,T)\).  For fixed
\(\gamma\) and \(\sigma\), the fundamental lemma of the calculus of variations
first gives the following identity for every \(t\in(0,T)\); continuity in
\(t\) then extends it to \(t=0,T\):
\begin{equation}
 \int_\R\int_0^L
 \widetilde p(t,x_1,\gamma(r))
 e^{2i\alpha^2\sigma x_1}e^{-2\alpha\sigma r}
 \,dr\,dx_1=0
 \label{eq:time-localized-identity}
\end{equation}
for every \(t\in[0,T]\).

Define the partial Fourier transform
\[
 \widehat p(t,\xi,x')
 :=\int_\R e^{-i\xi x_1}\widetilde p(t,x_1,x')\,dx_1.
\]
Equation \eqref{eq:time-localized-identity} becomes
\begin{equation}
 I_{-2\alpha\sigma}
 \bigl(\widehat p(t,-2\alpha^2\sigma,\cdot)\bigr)(\gamma)=0.
 \label{eq:attenuated-ray-identity}
\end{equation}
This holds for every non-tangential unit-speed geodesic \(\gamma\).  For all
sufficiently small \(|\sigma|\), one has
\(|2\alpha\sigma|<\varepsilon_0\), so
\cref{ass:ray-injectivity} implies
\begin{equation}
 \widehat p(t,-2\alpha^2\sigma,x')=0,
 \qquad(t,x')\in[0,T]\times M_0.
 \label{eq:fourier-local-vanishing}
\end{equation}
For fixed \((t,x')\), the function
\(\xi\mapsto\widehat p(t,\xi,x')\) is entire because \(\widetilde p\) is
compactly supported in \(x_1\).  It vanishes on an open interval of real
frequencies by \eqref{eq:fourier-local-vanishing}; analytic continuation and
Fourier inversion give \(\widetilde p=0\).  Thus \(q_1=q_2\) in the product
setting.  Finally, \eqref{eq:potential-difference-transform} gives the same
conclusion for the original CTA metric, proving \cref{thm:main}.

\appendix

\section{Well-posedness of the direct problem and the measurement map}
\label{app:well-posedness}

We first prove the trace and transposition results after the reduction \(c=1\),
using the graph spaces \(\cH_\pm(Q)\) already defined in
\eqref{eq:reduced-graph-spaces}.  We then transfer the result to the original CTA
operator.  The transfer is based only on a graph-space isomorphism and therefore
does not presuppose well-posedness for the original problem.  We use the
standard \(L^2\) parabolic regularity, trace-extension, and transposition
framework; see, for example, Lions and Magenes \cite{LionsMagenes1972} and the
anisotropic trace-space formulation in
\cite[Appendix~A]{ChoulliKian2018}.  The details needed for the present graph
spaces are included below to fix the trace spaces and the input--output map
unambiguously.

Set
\[
 H^{2,1}(Q):=L^2(0,T;H^2(M))\cap H^1(0,T;L^2(M)).
\]
The parabolic trace-extension theorem will be used in the following form.  Given
\(\eta\in H^{1/2,1/4}(\Sigma)\), there is \(Z_\eta\in H^{2,1}(Q)\) such that
\[
 Z_\eta|_\Sigma=0,\qquad
 \partial_\nu Z_\eta|_\Sigma=\eta,\qquad
 Z_\eta|_{t=0}=Z_\eta|_{t=T}=0,
\]
and \(\|Z_\eta\|_{H^{2,1}(Q)}
\leq C\|\eta\|_{H^{1/2,1/4}(\Sigma)}\).  Given
\(\zeta\in H_0^{3/2,3/4}(\Sigma)\), there is \(Z_\zeta\in H^{2,1}(Q)\) with
\[
 Z_\zeta|_\Sigma=\zeta,\qquad
 \partial_\nu Z_\zeta|_\Sigma=0,\qquad
 Z_\zeta|_{t=0}=Z_\zeta|_{t=T}=0,
\]
and the analogous estimate.  We also use the corresponding extension of
compatible temporal traces.  Notice that
\(H_0^{1/2,1/4}(\Sigma)=H^{1/2,1/4}(\Sigma)\), since the temporal exponent is
below the endpoint trace threshold.

\subsection{Reduced graph spaces and generalized traces}

For \(u\in C^\infty(\overline Q)\), write
\[
 \tau_0u=u|_\Sigma,\qquad
 \tau_1u=\partial_\nu u|_\Sigma,\qquad
 r_0u=u(0,\cdot),\qquad r_Tu=u(T,\cdot).
\]
These are the classical lateral Dirichlet and Neumann traces and the two
temporal endpoint traces.  The purpose of the next proposition is to record
their continuous extensions when only \(u\) and the corresponding parabolic
operator applied to \(u\) are in \(L^2(Q)\).  These are the extensions used in
the input--output map and in the Green identities of \cref{sec:go-solutions}.

\begin{proposition}\label{prop:appendix-reduced-traces}
The preceding maps extend uniquely to bounded operators
\[
 \begin{aligned}
 \tau_0&:\cH_\pm(Q)\to H^{-1/2,-1/4}(\Sigma),&
 \tau_1&:\cH_\pm(Q)\to H^{-3/2,-3/4}(\Sigma),\\
 r_0,r_T&:\cH_\pm(Q)\to H^{-1}(M).
 \end{aligned}
\]
Moreover,
\begin{align}
 &\|\tau_0u\|_{H^{-1/2,-1/4}(\Sigma)}
 +\|\tau_1u\|_{H^{-3/2,-3/4}(\Sigma)}
 +\|r_0u\|_{H^{-1}(M)}+\|r_Tu\|_{H^{-1}(M)}
 \leq C\|u\|_{\cH_\pm(Q)}.
 \label{eq:reduced-trace-estimate}
\end{align}
\end{proposition}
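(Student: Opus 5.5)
The approach is the standard duality construction of generalized traces via Green's formula, carried out separately for \(\cH_+(Q)\) and \(\cH_-(Q)\); the two cases are symmetric under \(t\mapsto T-t\), so only \(\cH_+(Q)\) is described. For \(u\in C^\infty(\overline Q)\) and a test function \(z\in H^{2,1}(Q)\), the Green identity \eqref{eq:bilinear-green} with \(q=0\) reads
\begin{equation*}
 \int_Q(\partial_t-\Delta_g)u\,z-\int_Q u\,(-\partial_t-\Delta_g)z
 =\int_M\bigl[(r_Tu)(r_Tz)-(r_0u)(r_0z)\bigr]
 +\int_\Sigma\bigl[(\tau_0u)(\tau_1z)-(\tau_1u)(\tau_0z)\bigr].
\end{equation*}
The left-hand side is bounded by \(\|u\|_{\cH_+}\|z\|_{H^{2,1}(Q)}\). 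Each trace of \(u\) will be defined by choosing \(z\) so that every boundary term except one vanishes, and then using the trace-extension lifts \(Z_\eta\), \(Z_\zeta\) stated just before the proposition.

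\emph{Dirichlet trace.} For \(\eta\in H^{1/2,1/4}(\Sigma)\), take \(z=Z_\eta\), which has \(\tau_0z=0\), \(\tau_1z=\eta\) and vanishing temporal traces. Then \(\int_\Sigma(\tau_0u)\eta\) equals the left-hand side, so
\[
 \bigl|\langle\tau_0u,\eta\rangle\bigr|\le C\|u\|_{\cH_+}\|\eta\|_{H^{1/2,1/4}}.
\]
Since \(H^{1/2,1/4}_0=H^{1/2,1/4}\), this gives \(\tau_0u\in H^{-1/2,-1/4}(\Sigma)\) with the asserted bound.

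\emph{Neumann trace.} For \(\zeta\in H_0^{3/2,3/4}(\Sigma)\), take \(z=Z_\zeta\), which has \(\tau_0z=\zeta\), \(\tau_1z=0\) and vanishing temporal traces. This isolates \(-\int_\Sigma(\tau_1u)\zeta\) and gives \(\tau_1u\in H^{-3/2,-3/4}(\Sigma)\).

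\emph{Temporal traces.} For \(\psi\in H_0^1(M)\), I need \(z\in H^{2,1}(Q)\) with \(\tau_0z=\tau_1z=0\), \(r_0z=0\), \(r_Tz=\psi\), and \(\|z\|\le C\|\psi\|_{H^1}\). This is the obstacle, because an \(H^{2,1}\) function generally requires temporal data in \(H^1\) with compatible boundary behaviour, and one cannot also force \(\tau_1z=0\) for \(\psi\) that is only \(H^1_0\). I would avoid the full \(H^{2,1}\) regularity by taking \(z(t)=\theta(t)e^{(T-t)\Delta_D}\psi\), where \(\theta\) is a cutoff equal to \(1\) near \(T\) and \(0\) near \(0\), and \(e^{s\Delta_D}\) is the Dirichlet heat semigroup run backward from \(T\). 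Then \((-\partial_t-\Delta_g)z=-\theta'e^{(T-t)\Delta_D}\psi\) is smooth in \(t\) away from \(T\), and \(z\in L^2(0,T;H^2)\cap H^1(0,T;L^2)\) with norm controlled by \(\|\psi\|_{H^1_0}\) (the half-derivative gain of the heat flow). However, \(\tau_1z\ne0\), so the term \(\int_\Sigma(\tau_0u)(\tau_1z)\) survives. I would bound it using the already-established \(\tau_0u\in H^{-1/2,-1/4}\) together with \(\tau_1z\in H^{1/2,1/4}\), which holds by the trace theorem for \(H^{2,1}\). This gives
\[
 |\langle r_Tu,\psi\rangle|\le C\|u\|_{\cH_+}\|\psi\|_{H^1_0},
\]
that is, \(r_Tu\in H^{-1}(M)\). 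For \(r_0u\) I use the symmetric construction with \(\theta\) supported near \(0\) and the flow \(e^{t\Delta_D}\psi\).

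Uniqueness of the extensions, and consistency with the smooth traces, follow from density of \(C^\infty(\overline Q)\) in \(\cH_\pm(Q)\). I would prove this density by the standard Lions--Magenes argument: a functional orthogonal to \(C^\infty(\overline Q)\) in the graph inner product gives \(\ell\in L^2(Q)\) solving the adjoint heat equation with zero lateral and temporal data, whence \(\ell=0\) by parabolic regularity. Collecting the four bounds yields \eqref{eq:reduced-trace-estimate}.
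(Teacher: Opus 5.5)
Your proposal is correct and is essentially the paper's proof: the same Green identity tested against the lifts \(Z_\eta\) and \(Z_\zeta\), then a temporal lift with zero Dirichlet trace whose surviving term \(\int_\Sigma(\tau_0u)(\tau_1z)\) is controlled by the already-extended \(\tau_0u\), and finally uniqueness via density of \(C^\infty(\overline Q)\) in \(\cH_\pm(Q)\), which the paper simply cites from Lions--Magenes; your semigroup construction \(z=\theta(t)e^{(T-t)\Delta_D}\psi\) just makes explicit the lift the paper takes for granted. One small correction to your motivation: the ``obstacle'' you describe is not real, since subtracting \(Z_\eta\) with \(\eta=\tau_1z\) from your \(z\) would also kill the Neumann trace while keeping \(z(T)=\psi\) and \(z(0)=0\), though your argument does not need this.
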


\begin{proof}
We treat \(\cH_+(Q)\); the other case follows by reversing time.  For
\(u,Z\in C^\infty(\overline Q)\), integration by parts gives
\begin{align}
 &((\partial_t-\Delta_g)u,Z)_{L^2(Q)}
 -(u,(-\partial_t-\Delta_g)Z)_{L^2(Q)}\notag\\
 &\quad=(\tau_0u,\partial_\nu Z)_\Sigma
 -(\tau_1u,Z|_\Sigma)_\Sigma
 +(r_Tu,Z(T,\cdot))_M-(r_0u,Z(0,\cdot))_M.
 \label{eq:app-green-reduced}
\end{align}
Here and below, \((\cdot,\cdot)_\Sigma\) denotes the appropriate lateral
duality pairing, namely either the pairing between
\(H^{-1/2,-1/4}(\Sigma)\) and \(H^{1/2,1/4}(\Sigma)\), or the pairing between
\(H^{-3/2,-3/4}(\Sigma)\) and \(H_0^{3/2,3/4}(\Sigma)\).  Similarly,
\((\cdot,\cdot)_M\) denotes the pairing between \(H^{-1}(M)\) and
\(H_0^1(M)\).  For smooth arguments these pairings agree with the corresponding
\(L^2\) inner products under the convention fixed in
\cref{sec:preliminaries}.

Use \(Z_\eta\) in \eqref{eq:app-green-reduced}.  The only nonzero boundary
trace of \(Z_\eta\) is its normal derivative, so
\[
 (\tau_0u,\eta)_{H^{-1/2,-1/4},H^{1/2,1/4}}
 =((\partial_t-\Delta_g)u,Z_\eta)_{L^2(Q)}
 -(u,(-\partial_t-\Delta_g)Z_\eta)_{L^2(Q)}.
\]
This gives the required bound for \(\tau_0u\).  Similarly, the extension
\(Z_\zeta\) yields
\[
 -(\tau_1u,\zeta)_{H^{-3/2,-3/4},H_0^{3/2,3/4}}
 =((\partial_t-\Delta_g)u,Z_\zeta)_{L^2(Q)}
 -(u,(-\partial_t-\Delta_g)Z_\zeta)_{L^2(Q)},
\]
which gives the bound for \(\tau_1u\).

For \(\psi\in H_0^1(M)\), choose \(Z\in H^{2,1}(Q)\) such that
\[
 Z|_\Sigma=0,\qquad Z(0,\cdot)=0,\qquad Z(T,\cdot)=\psi,
 \qquad \|Z\|_{H^{2,1}(Q)}\leq C\|\psi\|_{H_0^1(M)}.
\]
Using the already extended \(\tau_0u\) in \eqref{eq:app-green-reduced} gives
\begin{align*}
 (r_Tu,\psi)_{H^{-1},H_0^1}
 ={}&((\partial_t-\Delta_g)u,Z)_{L^2(Q)}
 -(u,(-\partial_t-\Delta_g)Z)_{L^2(Q)}\\
 &-(\tau_0u,\partial_\nu Z)_
 {H^{-1/2,-1/4},H^{1/2,1/4}}.
\end{align*}
This proves the bound for \(r_Tu\); the initial trace is identical.  Finally,
the standard density theorem for maximal parabolic graph spaces gives density
of \(C^\infty(\overline Q)\) in \(\cH_\pm(Q)\); see, for example,
\cite{LionsMagenes1972}.  Hence the extensions are unique, completing the
proof.
\end{proof}

Identity \eqref{eq:app-green-reduced} extends by density to graph-space elements
whenever the second factor has the regularity and compatible traces required by
the displayed pairings.

\subsection{Transposition solutions for the reduced problem}

The preceding trace result allows us to formulate the direct problem for the
negative-order boundary and initial data used in the inverse problem.  We use
transposition because these data need not be classical traces of an
\(H^{2,1}(Q)\) solution; for smooth compatible data the definition agrees with
the usual parabolic solution.

\begin{proposition}\label{prop:appendix-reduced-transposition}
Let \(q\in L^\infty(Q)\).  For every
\[
 F\in L^2(Q),\qquad
 f\in H^{-1/2,-1/4}(\Sigma),\qquad
 u_0\in H^{-1}(M),
\]
there exists a unique \(u\in\cH_+(Q)\) satisfying
\begin{equation}
 P_qu=F\quad\text{in }Q,\qquad
 \tau_0u=f,\qquad r_0u=u_0
 \label{eq:reduced-forced-problem}
\end{equation}
in the transposition sense.  Moreover,
\begin{align}
 \|u\|_{\cH_+(Q)}
 &\leq C\bigl(\|F\|_{L^2(Q)}
 +\|f\|_{H^{-1/2,-1/4}(\Sigma)}
 +\|u_0\|_{H^{-1}(M)}\bigr),
 \label{eq:app-forward-estimate}\\
 \|r_Tu\|_{H^{-1}(M)}
 +\|\tau_1u\|_{H^{-3/2,-3/4}(\Sigma)}
 &\leq C\bigl(\|F\|_{L^2(Q)}
 +\|f\|_{H^{-1/2,-1/4}(\Sigma)}
 +\|u_0\|_{H^{-1}(M)}\bigr).
 \label{eq:app-output-estimate}
\end{align}
The analogous backward statement holds for \(P_q^*\), with \(r_Tu\)
prescribed instead of \(r_0u\).
\end{proposition}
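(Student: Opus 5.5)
The plan is the classical transposition method of Lions--Magenes, built on the generalized Green identity \eqref{eq:app-green-reduced} and on strong $H^{2,1}$ solvability of the backward adjoint problem.

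\emph{Dual problem.} For $G\in L^2(Q)$, let $z=z_G\in H^{2,1}(Q)$ be the unique strong solution of
\[
 P_q^*z=G\ \text{in }Q,\qquad z|_\Sigma=0,\qquad z(T,\cdot)=0 .
\]
Standard $L^2$ parabolic theory for bounded $q$ (time reversal followed by Galerkin, or a perturbation of the heat semigroup) gives $\|z\|_{H^{2,1}(Q)}\le C\|G\|_{L^2(Q)}$. The trace theorems then give
\[
 \|\partial_\nu z\|_{H^{1/2,1/4}(\Sigma)}+\|z(0,\cdot)\|_{H_0^1(M)}\le C\|G\|_{L^2(Q)} .
\]

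\emph{Definition and existence.} We call $u$ a transposition solution if, for all $G\in L^2(Q)$,
\[
 (u,G)_{L^2(Q)}=(F,z_G)_{L^2(Q)}-(f,\partial_\nu z_G)_\Sigma+(u_0,z_G(0,\cdot))_M .
\]
The signs are read off from \eqref{eq:app-green-reduced}, with the convention fixed in \cref{sec:preliminaries}. The right-hand side is a bounded conjugate-linear functional in $G$, with norm at most $C(\|F\|+\|f\|+\|u_0\|)$. Riesz representation therefore gives a unique $u\in L^2(Q)$ obeying this bound, and uniqueness is immediate from the identity itself.

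Taking $G=P_q^*\varphi$ with $\varphi\in C_0^\infty(Q)$, so that $z_G=\varphi$, shows $P_qu=F$ in distributions. Hence $(\partial_t-\Delta_g)u=F-qu\in L^2(Q)$, and $u\in\cH_+(Q)$ with \eqref{eq:app-forward-estimate}.

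\emph{Identifying the traces.} We must check that the generalized traces of \cref{prop:appendix-reduced-traces} satisfy $\tau_0u=f$ and $r_0u=u_0$. Apply the extended Green identity \eqref{eq:app-green-reduced} to $u$ and to any $Z\in H^{2,1}(Q)$ with $Z|_\Sigma=0$ and $Z(T,\cdot)=0$, where $Z=z_G$ with $G:=P_q^*Z$. Subtracting the defining identity gives
\[
 (\tau_0u-f,\partial_\nu Z)_\Sigma+(r_0u-u_0,Z(0,\cdot))_M=0 .
\]
The extension operators $Z_\eta$ and the temporal extension from the start of the appendix let $(\partial_\nu Z,Z(0,\cdot))$ range independently over $H^{1/2,1/4}(\Sigma)\times H_0^1(M)$. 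This forces both differences to vanish.

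\emph{Outputs and the backward statement.} The output bound \eqref{eq:app-output-estimate} follows from \eqref{eq:reduced-trace-estimate} together with \eqref{eq:app-forward-estimate}. The backward statement for $P_q^*$ follows by the substitution $t\mapsto T-t$ and $q\mapsto\overline q$, exchanging the roles of $r_0$ and $r_T$.

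\emph{Main obstacle.} The delicate step is justifying \eqref{eq:app-green-reduced} for a pair ($u$ in the graph space, $Z$ in $H^{2,1}$), since it mixes negative-order pairings. I would handle it by density of $C^\infty(\overline Q)$ in $\cH_+(Q)$, as used in \cref{prop:appendix-reduced-traces}. Each term is continuous in $u\in\cH_+(Q)$ for fixed $Z$ with vanishing lateral Dirichlet and final traces, because the only pairings that occur are $\tau_0u$ against $\partial_\nu Z$ and $r_0u$ against $Z(0,\cdot)$. A second point to check is the independent surjectivity onto the pair of traces; for this I would combine the two extension constructions, subtracting a lift of the initial trace that vanishes on $\Sigma$ with zero normal derivative.
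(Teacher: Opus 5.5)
Your proposal is correct and follows the paper's proof essentially step for step: the same $H^{2,1}$ adjoint problem $P_q^*z_G=G$ and Riesz-represented functional, the distributional equation via $G=P_q^*\varphi$, identification of $\tau_0u$ and $r_0u$ through the Green identity \eqref{eq:app-green-reduced} with independently prescribed test traces, and time reversal for the backward case. There are two harmless slips: under the paper's convention the functional is linear (not conjugate-linear) in $G$, and the identity should read $(\tau_0u-f,\partial_\nu Z)_\Sigma-(r_0u-u_0,Z(0,\cdot))_M=0$; this changes nothing, since each term is forced to vanish separately.
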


\begin{proof}
For \(G\in L^2(Q)\), let \(z_G\in H^{2,1}(Q)\) be the unique solution of
\[
 P_q^*z_G=G\quad\text{in }Q,\qquad
 z_G|_\Sigma=0,\qquad z_G(T,\cdot)=0.
\]
Standard \(L^2\)-regularity for uniformly parabolic equations gives
\begin{equation}
 \|z_G\|_{H^{2,1}(Q)}
 +\|\partial_\nu z_G\|_{H^{1/2,1/4}(\Sigma)}
 +\|z_G(0,\cdot)\|_{H_0^1(M)}
 \leq C\|G\|_{L^2(Q)}.
 \label{eq:app-adjoint-regularity}
\end{equation}
Define
\begin{align}
 \ell(G):={}&(F,z_G)_{L^2(Q)}
 -(f,\partial_\nu z_G)_
 {H^{-1/2,-1/4}(\Sigma),H^{1/2,1/4}(\Sigma)}
 +(u_0,z_G(0,\cdot))_{H^{-1}(M),H_0^1(M)}.
 \label{eq:app-transposition-functional}
\end{align}
This is a bounded linear functional on \(L^2(Q)\).  Riesz representation gives
a unique \(u\in L^2(Q)\) such that
\begin{equation}
 (u,G)_{L^2(Q)}=\ell(G),\qquad G\in L^2(Q).
 \label{eq:app-transposition-definition}
\end{equation}
This identity defines the transposition solution.

Taking \(G=P_q^*\xi\) for \(\xi\in C_0^\infty(Q)\) shows that \(P_qu=F\) in
distributions.  Hence
\((\partial_t-\Delta_g)u=F-qu\in L^2(Q)\), so \(u\in\cH_+(Q)\).
If \(z\in H^{2,1}(Q)\) has zero lateral trace and vanishes at \(t=T\), use
\(G=P_q^*z\) in \eqref{eq:app-transposition-definition} and compare with
\eqref{eq:app-green-reduced}.  This yields
\begin{align}
 &(\tau_0u-f,\partial_\nu z)_
 {H^{-1/2,-1/4},H^{1/2,1/4}}
 -(r_0u-u_0,z(0,\cdot))_{H^{-1},H_0^1}=0.
 \label{eq:app-trace-identification}
\end{align}
The trace-extension theorem permits the two test traces to be prescribed
independently, and therefore \(\tau_0u=f\) and \(r_0u=u_0\).

The Riesz estimate, the equation, and
\cref{prop:appendix-reduced-traces} imply
\cref{eq:app-forward-estimate,eq:app-output-estimate}.  If all data vanish,
\eqref{eq:app-transposition-definition} gives \(u=0\), proving uniqueness.
Time reversal gives the backward result.
\end{proof}

Taking \(F=0\) and restricting the Neumann trace to \(V\) proves boundedness of
the reduced partial input--output map.

\subsection{Transfer to the original CTA operator}
\label{subsec:appendix-transfer}

We now recover the general-\(c\) statement without repeating the preceding
proof.  For the original metric \(g=c\widetilde g\), define
\[
 \cP_{c,\pm}:=\pm c^{-1}\partial_t-\Delta_g,\qquad
 \cH_{c,\pm}(Q)
 :=\{u\in L^2(Q):\cP_{c,\pm}u\in L^2(Q)\}.
\]
We equip these spaces with the graph norms
\[
 \|u\|_{\cH_{c,\pm}(Q)}^2
 :=\|u\|_{L^2(Q)}^2+\|\cP_{c,\pm}u\|_{L^2(Q)}^2.
\]
Thus \(\cH_{c,+}(Q)\) and \(\cH_{c,-}(Q)\) are simply the maximal \(L^2\)
graph spaces for the forward and backward principal parts before the conformal
reduction.  We introduce them here only to transfer the reduced trace and
well-posedness statements back to the original CTA operator.
Let \(\widetilde{\cH}_\pm(Q)\) be the spaces
\eqref{eq:reduced-graph-spaces} formed with \(\widetilde g\), and let
\[
 T_cu:=c^\rho u,\qquad \rho=\frac{n-2}{4}.
\]
Applying \eqref{eq:conformal-conjugation} with \(q=0\) shows
\begin{equation}
 (\pm\partial_t-\Delta_{\widetilde g})T_cu
 =c^{(n+2)/4}\cP_{c,\pm}u-q_cT_cu,\qquad
 q_c:=c^{-\rho}\Delta_{\widetilde g}(c^\rho).
 \label{eq:graph-space-conformal-identity}
\end{equation}
Since \(q_c\) and all powers of \(c\) are smooth and bounded, \(T_c\) is an
isomorphism
\begin{equation}
 T_c:\cH_{c,\pm}(Q)\longrightarrow\widetilde{\cH}_\pm(Q)
 \label{eq:graph-space-isomorphism}
\end{equation}
with equivalent graph norms.  This step is purely operator-theoretic and does
not use existence or uniqueness of the direct problem.

We may therefore define the generalized traces on \(\cH_{c,\pm}(Q)\) by
pullback from \(\widetilde{\cH}_\pm(Q)\).  Tilded trace symbols refer to the
graph spaces formed with \(\widetilde g\); in particular,
\(\widetilde\tau_1\) uses the outward unit normal
\(\nu_{\widetilde g}\), whereas \(\tau_1\) uses \(\nu_g\).  The Dirichlet and
temporal traces are metric-independent, but we retain tildes to indicate that
they act on the transformed function.  The resulting formulas, first valid
for \(u\in C^\infty(\overline Q)\) and then for all graph-space elements, are
\begin{align}
 \widetilde\tau_0(T_cu)&=c^\rho\tau_0u,
 \label{eq:app-dirichlet-transform}\\
 \widetilde r_j(T_cu)&=c^\rho r_ju,\qquad j\in\{0,T\},
 \label{eq:app-time-transform}\\
 \widetilde\tau_1(T_cu)
 &=c^{\rho+1/2}\tau_1u
 +\rho c^{-1}(\partial_{\nu_{\widetilde g}}c)
   \widetilde\tau_0(T_cu).
 \label{eq:app-neumann-transform}
\end{align}
The last identity is the product rule together with
\(\nu_g=c^{-1/2}\nu_{\widetilde g}\).  Its two terms lie in
\(H^{-3/2,-3/4}(\Sigma)\) because
\(H^{-1/2,-1/4}(\Sigma)\hookrightarrow H^{-3/2,-3/4}(\Sigma)\).

\begin{proposition}\label{prop:app-original-wellposedness}
Let \(q\in L^\infty(Q)\).  For every
\[
 F\in L^2(Q),\qquad
 f\in H^{-1/2,-1/4}(\Sigma),\qquad
 u_0\in H^{-1}(M),
\]
there exists a unique \(u\in\cH_{c,+}(Q)\) such that
\[
 \cL_{c,g,q}u=F\quad\text{in }Q,\qquad
 \tau_0u=f,\qquad r_0u=u_0
\]
in the transposition sense.  Moreover,
\begin{align}
 &\|u\|_{\cH_{c,+}(Q)}
 +\|r_Tu\|_{H^{-1}(M)}
 +\|\tau_1u\|_{H^{-3/2,-3/4}(\Sigma)}\notag\\
 &\qquad\leq C\bigl(
 \|F\|_{L^2(Q)}
 +\|f\|_{H^{-1/2,-1/4}(\Sigma)}
 +\|u_0\|_{H^{-1}(M)}\bigr).
 \label{eq:app-original-estimate}
\end{align}
The analogous backward statement holds for
\(\cL_{c,g,q}^*=-c^{-1}\partial_t-\Delta_g+\overline q\), with \(r_Tu\)
prescribed in place of \(r_0u\).  In particular, for \(F=0\) the partial
input--output map \(\Lambda_{c,g,q}^{U,V}\) defined in
\eqref{eq:partial-map} is well-defined and bounded.
\end{proposition}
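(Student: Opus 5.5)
The plan is to transfer everything from the reduced problem via the graph-space isomorphism \eqref{eq:graph-space-isomorphism}, rather than redo the duality argument. Write \(\widetilde u:=T_cu=c^\rho u\). By \eqref{eq:conformal-conjugation},
\[
 c^{(n+2)/4}\cL_{c,g,q}u=(\partial_t-\Delta_{\widetilde g}+\widetilde q)\widetilde u ,
\]
with \(\widetilde q\) as in \eqref{eq:transformed-potential}. Since \(c\) is smooth and positive on the compact set \(M\), \(\widetilde q\in L^\infty(Q)\). So the original forced problem is equivalent to the reduced problem \eqref{eq:reduced-forced-problem} for \(P_{\widetilde q}\), with data
\[
 \widetilde F=c^{(n+2)/4}F,\qquad \widetilde f=c^\rho f,\qquad \widetilde u_0=c^\rho u_0 .
\]
Multiplication by smooth positive powers of \(c\) is an isomorphism of \(L^2(Q)\), of \(H^{-1}(M)\) (it preserves \(H_0^1(M)\)), and of \(H^{-1/2,-1/4}(\Sigma)\) (it preserves \(H^{1/2,1/4}(\Sigma)\)). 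Hence the transformed data lie in the spaces required by \cref{prop:appendix-reduced-transposition}.

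\textbf{Existence and meaning of the transposition sense.} First apply \cref{prop:appendix-reduced-transposition} to get a unique \(\widetilde u\in\widetilde\cH_+(Q)\). Then set \(u:=T_c^{-1}\widetilde u\in\cH_{c,+}(Q)\). The phrase ``in the transposition sense'' for the original operator is read through this pullback: \(\cL_{c,g,q}u=F\) holds in distributions (which follows from the identity above), and the traces are the pulled-back traces. By \eqref{eq:app-dirichlet-transform} and \eqref{eq:app-time-transform}, \(\tau_0u=c^{-\rho}\widetilde\tau_0\widetilde u=f\) and \(r_0u=u_0\). If one instead prefers an intrinsic definition, I would check that it agrees with this one. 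To do so, rewrite the Green identity \eqref{eq:app-green-reduced} for \(\cL_{c,g,q}\) with the measures \(dV_g=c^{n/2}dV_{\widetilde g}\) and \(dS_g\), and the normal \(\nu_g\) from \eqref{eq:conformal-measures}. Test functions \(z\) for the original adjoint then correspond to \(\widetilde z=c^{\rho}z\) up to the known weights, so the two weak formulations coincide.

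\textbf{Uniqueness.} Any two solutions map under \(T_c\) to reduced transposition solutions with identical data, so the uniqueness part of \cref{prop:appendix-reduced-transposition} applies. This uses only that \(T_c\) is bijective on the graph spaces and intertwines the traces.

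\textbf{Estimates and the map.} The estimate \eqref{eq:app-original-estimate} follows by composing three facts: the norm equivalence of \(T_c\), the bounds \eqref{eq:app-forward-estimate} and \eqref{eq:app-output-estimate}, and the transformation laws. For the output, \(r_Tu=c^{-\rho}\widetilde r_T\widetilde u\). For the Neumann trace, \eqref{eq:app-neumann-transform} gives
\[
 \tau_1u=c^{-\rho-1/2}\bigl(\widetilde\tau_1\widetilde u-\rho c^{-1}(\partial_{\nu_{\widetilde g}}c)\widetilde f\bigr),
\]
which is bounded in \(H^{-3/2,-3/4}(\Sigma)\) by the embedding \(H^{-1/2,-1/4}\hookrightarrow H^{-3/2,-3/4}\) and the fact that smooth multipliers preserve these spaces. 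The backward statement follows in the same way from the backward part of the reduced proposition, using \(\cH_{c,-}\). For \(F=0\), restriction to \(V\) is bounded into the quotient space \eqref{eq:restriction-space}. Also, \(\supp\widetilde f=\supp f\), so \(\cD_U\) is preserved. This gives the boundedness of \(\Lambda_{c,g,q}^{U,V}\).

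\textbf{Main obstacle.} The delicate point is the trace bookkeeping: justifying \eqref{eq:app-neumann-transform} on the whole graph space, not only for smooth functions, and making sure the transposition notion for the original operator is consistent with the pullback. I would handle both by density of \(C^\infty(\overline Q)\) in \(\cH_{c,+}(Q)\). This density is inherited from \(\widetilde\cH_+(Q)\) through \(T_c\), because \(T_c\) maps smooth functions to smooth functions. With density, the identities extend from the smooth case by continuity of both sides.
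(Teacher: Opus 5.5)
Your proposal is correct and follows the paper's proof essentially step by step. You conjugate by \(T_c=c^\rho\), solve the reduced problem with data \((c^{(n+2)/4}F,\,c^\rho f,\,c^\rho u_0)\) via \cref{prop:appendix-reduced-transposition}, pull back, and read off uniqueness, the traces and the estimate from the graph-space isomorphism and the transformation laws \eqref{eq:app-dirichlet-transform}--\eqref{eq:app-neumann-transform}, exactly as the paper does. Your explicit inversion of the Neumann law and your check that the two weak formulations coincide only make explicit what the paper leaves implicit; in that check the powers of \(c\) in fact cancel exactly, since \(-\tfrac{n+2}{4}-\rho+\tfrac n2=0\).
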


\begin{proof}
By \eqref{eq:conformal-conjugation}, \(v=T_cu\) is required to solve
\begin{equation}
 (\partial_t-\Delta_{\widetilde g}+\widetilde q)v
 =c^{(n+2)/4}F,\qquad
 \widetilde\tau_0v=c^\rho f,\qquad
 \widetilde r_0v=c^\rho u_0.
 \label{eq:transformed-forced-problem}
\end{equation}
Proposition~\ref{prop:appendix-reduced-transposition} gives a unique solution
\(v\) of this reduced problem.  Put \(u=c^{-\rho}v\).
The graph-space isomorphism and
\cref{eq:app-dirichlet-transform,eq:app-time-transform} give the equation and
the prescribed traces for \(u\), and uniqueness follows by applying \(T_c\).
All norms involved are preserved up to fixed equivalence constants, which gives
\eqref{eq:app-original-estimate}.  The backward statement follows in the same
way.  Taking \(F=0\) and using the quotient norm on
\(H^{-3/2,-3/4}(V)\) proves the assertion for
\(\Lambda_{c,g,q}^{U,V}\).
\end{proof}

Finally, \cref{eq:app-time-transform,eq:app-neumann-transform} give exactly
\cref{eq:transformed-terminal,eq:transformed-neumann}.  Multiplication by
\(c^\rho\) is an isomorphism on \(H^{-1}(M)\),
\(H^{-1/2,-1/4}(\Sigma)\), and the restriction spaces, and it preserves the
support condition defining \(\cD_U\).  Thus the input transformation is
bijective and the additional Neumann term is known and potential-independent.
This proves \cref{prop:map-reduction}.

Together with the conformal discussion in Section~\ref{sec:preliminaries},
Proposition~\ref{prop:app-original-wellposedness} proves the well-posedness of
the original initial-boundary value problem \eqref{eq:original-ibvp} and of the
partial input--output map \eqref{eq:partial-map}, while the preceding trace
identities justify the reduction of these objects to the case \(c=1\).

\section{Boundary Carleman estimates}
\label{app:carleman}

Throughout this appendix, \(g=dx_1^2\oplus g_0\), and the weights are those in
\eqref{eq:parabolic-weights}.  In particular, the lower-case weight
\(\varphi_h=x_1+\beta^2t/h\) is the one convexified below, whereas
\(\Phi_h=\varphi_h/h\) is the exponent in the conjugated operator.

\begin{proposition}\label{prop:boundary-carleman}
Let \(q\in L^\infty(Q)\).  There exist \(C>0\) and \(h_0>0\) such that the
following estimates hold for \(0<h\leq h_0\).

If \(u\in C^\infty(\overline Q)\) satisfies
\[
 u|_\Sigma=0,\qquad u(0,\cdot)=0,
\]
then
\begin{align}
 &Ch^3\int_{\{\partial_\nu\varphi_h>0\}}
 |\partial_\nu\varphi_h|\,|\partial_\nu u|^2\,dS_gdt
 +Ch^2\bigl(\|u\|_{L^2(Q)}^2
 +\|h\nabla_gu\|_{L^2(Q)}^2\bigr)\notag\\
 &\quad\leq
 \|h^2e^{-\Phi_h}P_q(e^{\Phi_h}u)\|_{L^2(Q)}^2
 +h^2\|u(T,\cdot)\|_{L^2(M)}^2\notag\\
 &\qquad\quad
 +h^3\int_{\{\partial_\nu\varphi_h<0\}}
 |\partial_\nu\varphi_h|\,|\partial_\nu u|^2\,dS_gdt.
 \label{eq:carleman-forward}
\end{align}
If \(u\in C^\infty(\overline Q)\) satisfies
\[
 u|_\Sigma=0,\qquad u(T,\cdot)=0,
\]
then
\begin{align}
 &Ch^3\int_{\{\partial_\nu\varphi_h<0\}}
 |\partial_\nu\varphi_h|\,|\partial_\nu u|^2\,dS_gdt
 +Ch^2\bigl(\|u\|_{L^2(Q)}^2
 +\|h\nabla_gu\|_{L^2(Q)}^2\bigr)\notag\\
 &\quad\leq
 \|h^2e^{\Phi_h}P_q^*(e^{-\Phi_h}u)\|_{L^2(Q)}^2
 +h^2\|u(0,\cdot)\|_{L^2(M)}^2\notag\\
 &\qquad\quad
 +h^3\int_{\{\partial_\nu\varphi_h>0\}}
 |\partial_\nu\varphi_h|\,|\partial_\nu u|^2\,dS_gdt.
 \label{eq:carleman-adjoint}
\end{align}
The boundary sets are subsets of \(\Sigma\); the glancing set contributes
nothing.
\end{proposition}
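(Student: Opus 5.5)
The plan is to prove the forward estimate \eqref{eq:carleman-forward} by a direct integration-by-parts computation for the conjugated operator. The adjoint estimate \eqref{eq:carleman-adjoint} then follows by time reversal \(t\mapsto T-t\) together with \(x_1\mapsto -x_1\), which interchanges the two boundary faces. With \(\Phi_h=\varphi_h/h\), \eqref{eq:L-plus} gives
\[
 \cL_{+,h}=h^2\partial_t+A+iB+h^2q,\qquad A:=-h^2\Delta_g-\alpha^2,\qquad iB:=-2h\partial_{x_1}.
\]
Here \(A\) is formally selfadjoint and \(B=2ih\partial_{x_1}\) is formally selfadjoint as well, so \(iB\) is skew-adjoint. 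Since \(h^2\|qu\|^2\) can be absorbed into the \(Ch^2\|u\|^2\) term for small \(h\), I will drop \(q\) at once.

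The first step is to expand
\[
 \|(h^2\partial_t+A)u+iBu\|^2
 =\|h^2\partial_tu+Au\|^2+\|Bu\|^2+2\operatorname{Re}(h^2\partial_tu+Au,iBu).
\]
The cross term is where the boundary contribution appears. Because \(u|_\Sigma=0\), we have \(\nabla_gu=(\partial_\nu u)\nu\) on \(\Sigma\). Integrating by parts in \(-2\operatorname{Re}(-h^2\Delta_gu,2h\partial_{x_1}u)\) therefore produces exactly
\[
 -2h^3\int_\Sigma\partial_\nu x_1\,|\partial_\nu u|^2\,dS_g\,dt,
\]
because \(g\) is a product metric and \(\partial_{x_1}\) is Killing with \(\nabla^2x_1=0\). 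This sign gives a positive contribution on \(\{\partial_\nu\varphi_h<0\}\) in the wrong direction, so the orientation must be checked; the result is that the favorable term lies on the side where \(\partial_\nu\phi>0\), as stated. The \(\alpha^2\) part of \(A\) pairs with \(iB\) to give no interior term, since \(\operatorname{Re}(u,\partial_{x_1}u)=0\) when \(u|_\Sigma=0\). The time-derivative cross term
\[
 2h^3\operatorname{Re}(\partial_tu,-2\partial_{x_1}u)
\]
integrates by parts in \(t\) and \(x_1\) to zero up to endpoint terms at \(t=0,T\), and these are bounded using \(u(0)=0\) and \(h^2\|u(T)\|^2\). The commutator contribution vanishes because the weight is linear; this is the usual degeneracy of a limiting Carleman weight.

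Because the commutator gives no positivity, the second step is to convexify. I replace \(\varphi_h\) by \(f(\varphi_h)\), for instance \(f(\lambda)=\lambda+\frac{h}{2\varepsilon}\lambda^2\), or else use an auxiliary \(x_1\)-dependent weight \(x_1+\frac{h}{\varepsilon}x_1^2/2\). The commutator \(i[A,B]\) then acquires a positive term of size \(\frac{h^2}{\varepsilon}(\|u\|^2+\|h\nabla u\|^2)\). This modifies the normal derivative on the boundary only by a factor \(1+O(h/\varepsilon)\), so the boundary term keeps its sign and size. Since \(M\) is bounded in \(x_1\), the difference between the convexified exponential and \(e^{\Phi_h}\) is a bounded multiplier, which is harmless because the estimate is written for \(e^{-\Phi_h}P_qe^{\Phi_h}u\) and \(e^{\pm x_1^2/(2\varepsilon)}\) is smooth and bounded. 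I conjugate back and absorb the resulting lower-order terms.

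The third step, and the one I expect to be the main obstacle, is controlling the parabolic time term. Its cross term with the commutator part involves \(\operatorname{Re}(h^2\partial_tu,\cdot)\), which must be bounded by \(h^2\|u(T)\|^2\) plus absorbable quantities. The \(\beta^2t/h\) in \(\varphi_h\) is what shifts the symbol to \(-\alpha^2\), and the restriction \(\beta>1/\sqrt3\) presumably enters when a term of the form \((\beta^2-\alpha^2\cdot\text{const})\) has to be kept positive. To recover \(\|h\nabla u\|^2\) from \(\|u\|^2\) I would use the ellipticity of \(A\): the term \(\operatorname{Re}(Au,u)\) controls the gradient modulo \(\alpha^2\|u\|^2\).
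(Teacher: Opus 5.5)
\textbf{Gap: the time derivative is grouped with the symmetric part.} Your decomposition sets \(X=h^2\partial_t+A\), and the argument breaks there. The cross term
\(2\operatorname{Re}(h^2\partial_tu,-2h\partial_{x_1}u)=-4h^3\operatorname{Re}(\partial_tu,\partial_{x_1}u)\)
is not an endpoint term. Integrating by parts in \(x_1\) and then in \(t\) only shows that \(\operatorname{Im}(\partial_tu,\partial_{x_1}u)\) is an endpoint term. The real part equals \(-\operatorname{Re}(u,\partial_t\partial_{x_1}u)\), the quadratic form of a symmetric second-order operator.

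A concrete counterexample: take \(u=a(t)\chi_1(x)+b(t)\chi_2(x)\) with real \(\chi_1\in C_0^\infty(M^{\mathrm{int}})\), \(\chi_2=\partial_{x_1}\chi_1\), \(a=\sin(\pi t/T)\), \(b=\sin(2\pi t/T)\). Then
\(\int_Q\partial_tu\,\partial_{x_1}u\,dV_g\,dt=m\int_0^T(a'b-ab')\,dt=\tfrac{8}{3}m\), where \(m=-\|\partial_{x_1}\chi_1\|_{L^2(M)}^2\neq0\). This is nonzero even though all lateral and temporal traces of \(u\) vanish.

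Nor can the term be absorbed. \(\|h^2\partial_tu\|\) is controlled by \(\|Xu\|\) only up to \(\alpha^2\|u\|\). Cauchy--Schwarz therefore loses a term of size \(\|u\|\,\|h\partial_{x_1}u\|\), which swamps the \(O(h^2/\varepsilon)\) gain from convexification. This is exactly the ``third step'' you leave open.

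The missing idea is that \(h^2\partial_t\) is skew-adjoint modulo endpoint terms, so it must be grouped with \(-2h\partial_{x_1}\). The paper splits the convexified operator as \(\cP_1=h^2\partial_t-2h(1-\frac{h}{\delta}x_1)\partial_{x_1}+\frac{4h^2}{\delta}\) and \(\cP_2=-h^2\Delta_g+K(x_1)\). With this splitting:
\begin{itemize}
\item the \(\partial_t\)--\(\partial_{x_1}\) interaction lies inside \(\|\cP_1v\|^2\) and is simply discarded;
\item \(\partial_t\) paired with \(-h^2\Delta_g\) gives \(\frac{h^4}{2}\|\nabla_gv(T,\cdot)\|^2\geq0\);
\item \(\partial_t\) paired with \(K\) gives \(\frac{h^2}{2}\int_MK|v(T,\cdot)|^2\,dV_g=-\frac{\alpha^2}{2}h^2\|v(T,\cdot)\|^2+O(h^3)\|v(T,\cdot)\|^2\), since the \(t=0\) contributions vanish because \(v(0,\cdot)=0\).
\end{itemize}
This last term is the origin of \(h^2\|u(T,\cdot)\|^2\) on the right-hand side.

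\textbf{Two sign errors, and the role of \(\beta\).}

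Boundary term. Since \(\partial_{x_1}u=\partial_\nu\phi\,\partial_\nu u\) on \(\Sigma\) and \(\int_M\partial_{x_1}|\nabla_gu|_g^2\,dV_g=\int_{\partial M}\partial_\nu\phi\,|\partial_\nu u|^2\,dS_g\), one gets
\(-2\operatorname{Re}(-h^2\Delta_gu,2h\partial_{x_1}u)=+2h^3\int_\Sigma\partial_\nu\phi\,|\partial_\nu u|^2\,dS_g\,dt\).
This is favorable on \(\{\partial_\nu\phi>0\}\) with no reorientation needed. Your value \(-2h^3\int\cdots\) is off by a sign, which is why it seemed to contradict the statement.

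Convexification. For the conjugation \(e^{-\Phi_h}P_qe^{\Phi_h}\) the quadratic term must be subtracted, \(\varphi_h-\frac{h}{2\delta}x_1^2\). With your \(x_1+\frac{h}{2\varepsilon}x_1^2\), the first-order coefficient becomes \(-2h(1+\frac{h}{\varepsilon}x_1)\) and the zero-order term becomes \(-\alpha^2-\frac{2h}{\varepsilon}x_1+O(h^2)\). The interior term \(h\int_Q\partial_{x_1}[(1+\frac{h}{\varepsilon}x_1)K]|v|^2\) then has leading part \(-\frac{3-\beta^2}{\varepsilon}h^2\|v\|^2<0\).

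Role of \(\beta\). In the paper, \(\beta>1/\sqrt3\) enters through the \(4h^2/\delta\) shift from \(\cP_2\) to \(\cP_1\). That shift turns the gradient contribution \(-\frac{h^4}{\delta}\|\nabla_gv\|^2\) into \(+\frac{3h^4}{\delta}\|\nabla_gv\|^2\). It also changes the leading \(L^2\) coefficient to \((3-\beta^2)-4(1-\beta^2)=3\beta^2-1\).

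Your idea of recovering \(\|h\nabla_gv\|^2\) from \(\operatorname{Re}(\cP_2v,v)=\|h\nabla_gv\|^2+(Kv,v)\) is a legitimate substitute for that shift. It leads to the coefficient \((3-\beta^2)-\alpha^2=2\) instead. It only works, however, after the time derivative has been placed in the skew part.
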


\begin{proof}
We prove \eqref{eq:carleman-forward}.  The adjoint estimate follows by symmetry,
as explained at the end.  It is enough to prove the estimate for \(q=0\).
Writing \(P_0:=\partial_t-\Delta_g\), we have
\[
 \|h^2e^{-\Phi_h}P_0(e^{\Phi_h}u)\|_{L^2(Q)}
 \leq \|h^2e^{-\Phi_h}P_q(e^{\Phi_h}u)\|_{L^2(Q)}
 +h^2\|q\|_{L^\infty(Q)}\|u\|_{L^2(Q)}.
\]
After squaring, the last term is absorbed by the bulk
\(h^2\|u\|_{L^2(Q)}^2\) term for sufficiently small \(h\).  For \(q=0\), we
first prove the estimate for real-valued functions; applying it to the real and
imaginary parts gives the complex-valued result.  Thus all integrals in the
calculation below are real.

Fix \(\delta>0\) and convexify the lower-case weight by
\begin{equation}
 \varphi_{h,\delta}(t,x)
 :=\varphi_h(t,x)-\frac{h}{2\delta}x_1^2.
 \label{eq:convexified-weight}
\end{equation}
For a smooth \(v\) satisfying \(v|_\Sigma=0\) and \(v(0,\cdot)=0\), set
\[
 \cP_\delta v
 :=h^2e^{-\varphi_{h,\delta}/h}(\partial_t-\Delta_g)
 \bigl(e^{\varphi_{h,\delta}/h}v\bigr).
\]
Direct conjugation gives \(\cP_\delta=\cP_1+\cP_2\), where
\begin{align*}
 \cP_1&:=h^2\partial_t
 -2h\left(1-\frac h\delta x_1\right)\partial_{x_1}
 +\frac{4h^2}{\delta},\\
 \cP_2&:=-h^2\Delta_g+K(x_1),\\
 K(x_1)&:=-(1-\beta^2)-\frac{h^2}{\delta^2}x_1^2
 +\frac{2h}{\delta}x_1-\frac{3h^2}{\delta}.
\end{align*}
Indeed, before splitting, the zero-order coefficient is
\[
 -(1-\beta^2)-\frac{h^2}{\delta^2}x_1^2
 +\frac{2h}{\delta}x_1+\frac{h^2}{\delta}.
\]
Adding \(4h^2/\delta\) to \(\cP_1\) and subtracting it from \(\cP_2\)
produces the displayed formulas.

Write
\[
 (\cP_1v,\cP_2v)_{L^2(Q)}=\sum_{j=1}^6I_j,
\]
where
\begin{align*}
 I_1&=(h^2\partial_tv,-h^2\Delta_gv)_{L^2(Q)},&
 I_2&=(h^2\partial_tv,Kv)_{L^2(Q)},\\
 I_3&=\left(-2h\left(1-\frac h\delta x_1\right)
 \partial_{x_1}v,-h^2\Delta_gv\right)_{L^2(Q)},&
 I_4&=\left(-2h\left(1-\frac h\delta x_1\right)
 \partial_{x_1}v,Kv\right)_{L^2(Q)},\\
 I_5&=\left(\frac{4h^2}{\delta}v,-h^2\Delta_gv\right)_{L^2(Q)},&
 I_6&=\left(\frac{4h^2}{\delta}v,Kv\right)_{L^2(Q)}.
\end{align*}

We compute all six terms.  Since \(v=0\) on \(\Sigma\), also
\(\partial_tv=0\) there.  Spatial integration by parts followed by integration
in time gives
\begin{align}
 I_1
 &=-h^4\int_0^T\int_M\partial_tv\,\Delta_gv\,dV_gdt\notag\\
 &=h^4\int_0^T\int_M
 \langle\nabla_g\partial_tv,\nabla_gv\rangle_g\,dV_gdt\notag\\
 &=\frac{h^4}{2}\int_M|\nabla_gv(T,x)|_g^2\,dV_g.
 \label{eq:I1}
\end{align}
As \(K\) is independent of \(t\),
\begin{align}
 I_2
 &=\frac{h^2}{2}\int_MK(x_1)|v(T,x)|^2\,dV_g\notag\\
 &=-\frac{1-\beta^2}{2}h^2\|v(T,\cdot)\|_{L^2(M)}^2
 +O(h^3)\|v(T,\cdot)\|_{L^2(M)}^2.
 \label{eq:I2}
\end{align}

For \(I_3\), retain the lateral boundary term:
\begin{align*}
 I_3
 &=2h^3\int_Q\left(1-\frac h\delta x_1\right)
 \partial_{x_1}v\,\Delta_gv\,dV_gdt\\
 &=2h^3\int_\Sigma\left(1-\frac h\delta x_1\right)
 \partial_{x_1}v\,\partial_\nu v\,dS_gdt\\
 &\quad-2h^3\int_Q
 \left\langle\nabla_g\left[
 \left(1-\frac h\delta x_1\right)\partial_{x_1}v\right],
 \nabla_gv\right\rangle_g\,dV_gdt\\
 &=2h^3\int_\Sigma\left(1-\frac h\delta x_1\right)
 \partial_{x_1}v\,\partial_\nu v\,dS_gdt
 +\frac{2h^4}{\delta}\|\partial_{x_1}v\|_{L^2(Q)}^2\\
 &\quad-h^3\int_Q\left(1-\frac h\delta x_1\right)
 \partial_{x_1}|\nabla_gv|_g^2\,dV_gdt.
\end{align*}
The Dirichlet condition implies on \(\Sigma\) that
\[
 \nabla_gv=(\partial_\nu v)\nu,\qquad
 \partial_{x_1}v=(\partial_\nu\phi)\partial_\nu v.
\]
Integrating the last volume integral in the \(x_1\)-direction gives
\begin{align}
 I_3={}&h^3\int_\Sigma
 \left(1-\frac h\delta x_1\right)
 \partial_\nu\phi\,|\partial_\nu v|^2\,dS_gdt\notag\\
 &+\frac{2h^4}{\delta}\|\partial_{x_1}v\|_{L^2(Q)}^2
 -\frac{h^4}{\delta}\|\nabla_gv\|_{L^2(Q)}^2.
 \label{eq:I3}
\end{align}

The boundary term in \(I_4\) vanishes, and hence
\begin{align*}
 I_4
 &=-h\int_Q\left(1-\frac h\delta x_1\right)K(x_1)
 \partial_{x_1}(|v|^2)\,dV_gdt\\
 &=h\int_Q\partial_{x_1}\left[
 \left(1-\frac h\delta x_1\right)K(x_1)\right]
 |v|^2\,dV_gdt.
\end{align*}
A direct differentiation yields
\[
 h\partial_{x_1}\left[
 \left(1-\frac h\delta x_1\right)K(x_1)\right]
 =\frac{3-\beta^2}{\delta}h^2
 -\frac{6x_1}{\delta^2}h^3
 +\frac{3x_1^2}{\delta^3}h^4
 +\frac{3}{\delta^2}h^4.
\]
Therefore
\begin{align}
 I_4={}&\frac{3-\beta^2}{\delta}h^2\|v\|_{L^2(Q)}^2
 -\frac{6h^3}{\delta^2}\int_Qx_1|v|^2\,dV_gdt\notag\\
 &+\frac{3h^4}{\delta^3}\int_Qx_1^2|v|^2\,dV_gdt
 +\frac{3h^4}{\delta^2}\|v\|_{L^2(Q)}^2.
 \label{eq:I4}
\end{align}
The fifth term is
\begin{equation}
 I_5=\frac{4h^4}{\delta}\|\nabla_gv\|_{L^2(Q)}^2.
 \label{eq:I5}
\end{equation}
Finally,
\begin{align}
 I_6={}&-\frac{4(1-\beta^2)}{\delta}h^2\|v\|_{L^2(Q)}^2
 +\frac{8h^3}{\delta^2}\int_Qx_1|v|^2\,dV_gdt\notag\\
 &-\frac{4h^4}{\delta^3}\int_Qx_1^2|v|^2\,dV_gdt
 -\frac{12h^4}{\delta^2}\|v\|_{L^2(Q)}^2.
 \label{eq:I6}
\end{align}

The \(x_1\)-projection of \(M\) is bounded.  Combining
\cref{eq:I1,eq:I2,eq:I3,eq:I4,eq:I5,eq:I6}, and noting that the leading
\(L^2(Q)\) coefficient in \(I_4+I_6\) is
\[
 (3-\beta^2)-4(1-\beta^2)=3\beta^2-1,
\]
we obtain
\begin{align}
 (\cP_1v,\cP_2v)_{L^2(Q)}
 \geq{}&
 -\frac{1-\beta^2}{2}h^2\|v(T,\cdot)\|_{L^2(M)}^2
 +\frac{3\beta^2-1}{\delta}h^2\|v\|_{L^2(Q)}^2\notag\\
 &+\frac{3h^4}{\delta}\|\nabla_gv\|_{L^2(Q)}^2
 +h^3\int_\Sigma\left(1-\frac h\delta x_1\right)
 \partial_\nu\phi\,|\partial_\nu v|^2\,dS_gdt\notag\\
 &-Ch^3\|v\|_{L^2(Q)}^2
 -Ch^3\|v(T,\cdot)\|_{L^2(M)}^2.
 \label{eq:carleman-cross-term}
\end{align}
We discarded the nonnegative terms
\(h^4\|\nabla_gv(T,\cdot)\|_{L^2(M)}^2/2\) and
\(2h^4\|\partial_{x_1}v\|_{L^2(Q)}^2/\delta\).

The strict inequality \(3\beta^2-1>0\) absorbs the bulk error for sufficiently
small \(h\).  Compactness also allows us to arrange
\[
 \frac12\leq1-\frac h\delta x_1\leq\frac32\qquad\text{on }M.
\]
Use
\[
 \|\cP_\delta v\|_{L^2(Q)}^2
 =\|\cP_1v\|_{L^2(Q)}^2+\|\cP_2v\|_{L^2(Q)}^2
 +2(\cP_1v,\cP_2v)_{L^2(Q)}
\]
and move the unfavorable endpoint and boundary terms to the right.  This proves
the estimate with the convexified weight.

To remove the convexification, set
\[
 u=e^{-x_1^2/(2\delta)}v.
\]
Then
\[
 h^2e^{-\Phi_h}(\partial_t-\Delta_g)(e^{\Phi_h}u)
 =e^{-x_1^2/(2\delta)}\cP_\delta v.
\]
The multiplier and its first derivatives are bounded on \(M\), and the
multiplier is bounded away from zero.  Moreover,
\[
 \partial_\nu\varphi_{h,\delta}
 =\left(1-\frac h\delta x_1\right)\partial_\nu\varphi_h.
\]
Thus convexification preserves the positive and negative boundary parts and the
corresponding integrals are uniformly comparable.  This proves
\eqref{eq:carleman-forward} for \(q=0\); the bounded potential is absorbed as
at the start of the proof.

Finally, apply the forward estimate to \(v(s,x)=u(T-s,x)\), use the spatial
weight \(-x_1\), and replace \(q\) by \(\overline q(T-s,x)\).  Since
\[
 -\varphi_h(T-s,x)
 =-x_1+\frac{\beta^2s}{h}-\frac{\beta^2T}{h},
\]
the final term is a constant in the exponential and hence irrelevant.  Time
reversal changes the forward operator into the adjoint, while
\(\partial_\nu(-x_1)=-\partial_\nu\varphi_h\).  This proves
\eqref{eq:carleman-adjoint} without repeating the \(I_j\)-calculation.
\end{proof}

\section*{Generative AI statement}
During the preparation of this manuscript, the authors used OpenAI ChatGPT and
OpenAI Codex (GPT-5, accessed August 2026) to assist with drafting and revising
parts of the exposition and the \LaTeX{} source, with the aim of improving
clarity, organization, and typesetting consistency.  All mathematical arguments,
proofs, computations, and references were independently checked and revised by
the authors, who take full responsibility for the content.

\section*{Funding}
This work was supported by the National Natural Science Foundation of China
[grant number 12171178].

\section*{Disclosure statement}
The authors report that there are no competing interests to declare.

\section*{Data availability statement}
No data were used for the research described in this article.


\begin{thebibliography}{99}

\bibitem{AvdoninSeidman1995}
S.~Avdonin and T.~I. Seidman,
\newblock Identification of \(q(x)\) in \(u_t=\Delta u-qu\) from boundary
observations,
\newblock \emph{SIAM J. Control Optim.} \textbf{33} (1995), no.~4,
1247--1255. \url{https://doi.org/10.1137/S0363012993249729}.

\bibitem{BellassouedFraj2021}
M.~Bellassoued and O.~B. Fraj,
\newblock Stably determining time-dependent convection--diffusion coefficients
from a partial Dirichlet-to-Neumann map,
\newblock \emph{Inverse Problems} \textbf{37} (2021), no.~4, 045011.
\url{https://doi.org/10.1088/1361-6420/abe10d}.

\bibitem{BellassouedRassas2020}
M.~Bellassoued and I.~Rassas,
\newblock Stability estimate for an inverse problem of the
convection--diffusion equation,
\newblock \emph{J. Inverse Ill-Posed Probl.} \textbf{28} (2020), no.~1,
71--92. \url{https://doi.org/10.1515/jiip-2018-0072}.

\bibitem{BukhgeimUhlmann2002}
A.~L. Bukhgeim and G.~Uhlmann,
\newblock Recovering a potential from partial Cauchy data,
\newblock \emph{Comm. Partial Differential Equations} \textbf{27} (2002),
no.~3--4, 653--668. \url{https://doi.org/10.1081/PDE-120002868}.

\bibitem{CanutoKavian2001}
B.~Canuto and O.~Kavian,
\newblock Determining coefficients in a class of heat equations via boundary
measurements,
\newblock \emph{SIAM J. Math. Anal.} \textbf{32} (2001), no.~5, 963--986.
\url{https://doi.org/10.1137/S003614109936525X}.

\bibitem{ChoulliKian2013}
M.~Choulli and Y.~Kian,
\newblock Stability of the determination of a time-dependent coefficient in
parabolic equations,
\newblock \emph{Math. Control Relat. Fields} \textbf{3} (2013), no.~2,
143--160. \url{https://doi.org/10.3934/mcrf.2013.3.143}.

\bibitem{ChoulliKian2018}
M.~Choulli and Y.~Kian,
\newblock Logarithmic stability in determining the time-dependent zero order
coefficient in a parabolic equation from a partial Dirichlet-to-Neumann map.
Application to the determination of a nonlinear term,
\newblock \emph{J. Math. Pures Appl. (9)} \textbf{114} (2018), 235--261.
\url{https://doi.org/10.1016/j.matpur.2017.12.003}.

\bibitem{DosSantosFerreiraKenigSaloUhlmann2009}
D.~Dos Santos Ferreira, C.~E. Kenig, M.~Salo, and G.~Uhlmann,
\newblock Limiting Carleman weights and anisotropic inverse problems,
\newblock \emph{Invent. Math.} \textbf{178} (2009), no.~1, 119--171.
\url{https://doi.org/10.1007/s00222-009-0196-4}.

\bibitem{DosSantosFerreiraEtAl2016}
D.~Dos Santos Ferreira, Y.~Kurylev, M.~Lassas, and M.~Salo,
\newblock The Calder\'on problem in transversally anisotropic geometries,
\newblock \emph{J. Eur. Math. Soc. (JEMS)} \textbf{18} (2016), no.~11,
2579--2626. \url{https://doi.org/10.4171/JEMS/649}.

\bibitem{FanDuan2021}
J.~Fan and Z.~Duan,
\newblock Determining a potential of the parabolic equation from partial
boundary measurements,
\newblock \emph{Inverse Problems} \textbf{37} (2021), no.~9, 095001, 21~pp.
\url{https://doi.org/10.1088/1361-6420/ac156d}.

\bibitem{FeizmohammadiKianUhlmann2022}
A.~Feizmohammadi, Y.~Kian, and G.~Uhlmann,
\newblock An inverse problem for a quasilinear convection--diffusion equation,
\newblock \emph{Nonlinear Anal.} \textbf{222} (2022), 112921, 30~pp.
\url{https://doi.org/10.1016/j.na.2022.112921}.

\bibitem{FeizmohammadiKianUhlmann2024}
A.~Feizmohammadi, Y.~Kian, and G.~Uhlmann,
\newblock Partial data inverse problems for reaction--diffusion and heat
equations,
\newblock arXiv:2406.01387, 2024.

\bibitem{Isakov1991}
V.~Isakov,
\newblock Completeness of products of solutions and some inverse problems for
PDE,
\newblock \emph{J. Differential Equations} \textbf{92} (1991), no.~2,
305--316. \url{https://doi.org/10.1016/0022-0396(91)90051-A}.

\bibitem{KatchalovKurylevLassas2001}
A.~Katchalov, Y.~Kurylev, and M.~Lassas,
\newblock \emph{Inverse Boundary Spectral Problems},
\newblock Monographs and Surveys in Pure and Applied Mathematics, Vol.~123,
Chapman \& Hall/CRC, Boca Raton, FL, 2001.

\bibitem{KenigSalo2013}
C.~E. Kenig and M.~Salo,
\newblock The Calder\'on problem with partial data on manifolds and
applications,
\newblock \emph{Anal. PDE} \textbf{6} (2013), no.~8, 2003--2048.
\url{https://doi.org/10.2140/apde.2013.6.2003}.

\bibitem{KenigSjostrandUhlmann2007}
C.~E. Kenig, J.~Sj\"ostrand, and G.~Uhlmann,
\newblock The Calder\'on problem with partial data,
\newblock \emph{Ann. of Math. (2)} \textbf{165} (2007), no.~2, 567--591.
\url{https://doi.org/10.4007/annals.2007.165.567}.

\bibitem{KumarPurohit2025}
P.~Kumar and A.~Purohit,
\newblock Inverse boundary value problem for the convection--diffusion equation
with local data,
\newblock \emph{Applicable Analysis} \textbf{104} (2025), no.~11, 2195--2204.
\url{https://doi.org/10.1080/00036811.2025.2454385}.

\bibitem{KrupchykUhlmann2018}
K.~Krupchyk and G.~Uhlmann,
\newblock Inverse problems for advection diffusion equations in admissible
geometries,
\newblock \emph{Comm. Partial Differential Equations} \textbf{43} (2018),
no.~4, 585--615. \url{https://doi.org/10.1080/03605302.2018.1446163}.

\bibitem{LionsMagenes1972}
J.-L.~Lions and E.~Magenes,
\newblock \emph{Non-Homogeneous Boundary Value Problems and Applications},
Vol.~II,
\newblock Grundlehren der mathematischen Wissenschaften, Vol.~182,
Springer, Berlin--Heidelberg, 1972. \url{https://doi.org/10.1007/978-3-642-65217-2}.

\bibitem{LiuPurohit2026}
B.~Liu and A.~Purohit,
\newblock Recovery of time-dependent coefficients for the
convection--diffusion equation on conformally transversally anisotropic
manifolds from partial data,
\newblock arXiv:2608.04970, 2026.
\url{https://arxiv.org/abs/2608.04970}.

\bibitem{LiuSaksalaYan2024}
B.~Liu, T.~Saksala, and L.~Yan,
\newblock Partial data inverse problem for hyperbolic equation with
time-dependent damping coefficient and potential,
\newblock \emph{SIAM J. Math. Anal.} \textbf{56} (2024), no.~4, 5678--5722.
\url{https://doi.org/10.1137/23M1588676}.

\bibitem{LiuSaksalaYan2025}
B.~Liu, T.~Saksala, and L.~Yan,
\newblock Recovery of a time-dependent potential in hyperbolic equations on
conformally transversally anisotropic manifolds,
\newblock \emph{J. Spectr. Theory} \textbf{15} (2025), no.~1, 123--147.
\url{https://doi.org/10.4171/JST/547}.

\bibitem{MishraPurohitVashisth2025}
R.~K. Mishra, A.~Purohit, and M.~Vashisth,
\newblock Inverse problem for a time-dependent convection--diffusion equation
in admissible geometries,
\newblock \emph{Res. Math. Sci.} \textbf{12} (2025), article no.~75.
\url{https://doi.org/10.1007/s40687-025-00556-0}.

\bibitem{Ralston1982}
J.~Ralston,
\newblock Gaussian beams and the propagation of singularities,
\newblock in W.~Littman (ed.), \emph{Studies in Partial Differential
Equations}, MAA Studies in Mathematics, Vol.~23, Mathematical Association of
America, Washington, DC, 1982, 206--248.

\bibitem{SahooVashisth2020}
S.~K. Sahoo and M.~Vashisth,
\newblock A partial data inverse problem for the convection--diffusion
equation,
\newblock \emph{Inverse Probl. Imaging} \textbf{14} (2020), no.~1, 53--75.
\url{https://doi.org/10.3934/ipi.2019063}.

\bibitem{SaloUhlmann2011}
M.~Salo and G.~Uhlmann,
\newblock The attenuated ray transform on simple surfaces,
\newblock \emph{J. Differential Geom.} \textbf{88} (2011), no.~1, 161--187.
\url{https://doi.org/10.4310/jdg/1317758872}.

\bibitem{Zworski2012}
M.~Zworski,
\newblock \emph{Semiclassical Analysis},
\newblock Graduate Studies in Mathematics, vol.~138, American Mathematical
Society, Providence, RI, 2012.
\url{https://doi.org/10.1090/gsm/138}.

\end{thebibliography}
\end{document}